\documentclass{amsart}
\usepackage{amsmath}
\usepackage{amsthm}
\usepackage{amssymb}
\usepackage{cite}
\usepackage[all]{xypic}

\newcommand{\IR}{\mathbb{R}}
\newcommand{\IC}{\mathbb{C}}
\newcommand{\IZ}{\mathbb{Z}}
\newcommand{\IN}{\mathbb{N}}
\newcommand{\ID}{\mathbb{D}}
\newcommand{\IT}{\mathbb{T}}
\newcommand{\IP}{\mathbb{P}}
\newcommand{\RE}{{\rm Re} \,}

\newcommand{\id}{{id}}

\newtheorem{theorem}{Theorem}[section]
\newtheorem{lemma}[theorem]{Lemma}
\newtheorem{corollary}[theorem]{Corollary}

\theoremstyle{definition}

\newtheorem{example}[theorem]{Example}

\theoremstyle{remark}

\numberwithin{equation}{section}

\begin{document}
\title[Fixed Point Composition and Toeplitz-Composition C*-algebras]{Fixed Point Composition and Toeplitz-Composition C*-algebras}
\author{Katie S. Quertermous}
\address{Katie S. Quertermous, Department of Mathematics \& Statistics, James Madison University, Harrisonburg, VA 22807}

\date{May 25, 2012}

\begin{abstract}  Let $\varphi$ be a linear-fractional, non-automorphism self-map of $\mathbb{D}$ that fixes $\zeta \in \mathbb{T}$ and satisfies $\varphi^{\prime}(\zeta) \neq 1$ and consider the composition operator $C_{\varphi}$ acting on the Hardy space $H^2(\mathbb{D}).$  We determine which linear-fractionally-induced composition operators are contained in the unital C$^*$-algebra generated by $C_{\varphi}$ and the ideal $\mathcal{K}$ of compact operators.  We apply these results to show that $C^*(C_{\varphi}, \mathcal{K})$ and $C^*(\mathcal{F}_{\zeta})$, the unital C$^*$-algebra generated by all composition operators induced by linear-fractional, non-automorphism self-maps  of $\mathbb{D}$ that fix $\zeta$, are each isomorphic, modulo the ideal of compact operators, to a unitization of a crossed product of $C_0([0,1])$.  We compute the K-theory of $C^*(C_{\varphi}, \mathcal{K})$ and calculate the essential spectra of a class of operators in this C$^*$-algebra.  We also obtain a full description of the structures, modulo the ideal of compact operators,  of the C$^*$-algebras generated by the unilateral shift $T_z$ and a single linear-fractionally-induced composition operator.\end{abstract}

\email{querteks@jmu.edu}
\subjclass[2000]{Primary  47B33; Secondary  46L80, 47A53, 47L80}
\thanks{\textit{Keywords:} Composition operator, Hardy space, C$^*$-algebra, Toeplitz operator, Fredholm operator}
\maketitle

\section{Introduction}\label{intro}
For any analytic self-map $\varphi$ of the unit disk $\ID$, one can define the composition operator $C_{\varphi}: f \mapsto f \circ \varphi$, which is a bounded operator on the Hardy space $H^2(\ID)$.
In recent work, several authors have investigated unital C$^*$-algebras generated by linear-fractionally-induced composition operators and either the unilateral shift $T_z$ on $H^2(\ID)$ or the ideal $\mathcal{K}$ of compact operators on $H^2(\ID)$  \cite{Jury:2007G, Jury:2007F, KrieteMacCluerMoorhouse:2007, KrieteMacCluerMoorhouse:2009, KrieteMacCluerMoorhousePP}.  The key goals of this line of research are to describe the structures of these C$^*$-algebras, modulo the ideal of compact operators, and to use the structure results to determine spectral information for algebraic combinations of composition operators.  This approach to studying composition operators is motivated, in part, by  the well-known results for the shift algebra $C^*(T_z)$ that use a similar approach to determine spectral properties of Toeplitz operators with continuous symbols \cite{Coburn:1967, Coburn:1969}.

If $\varphi$ is a linear-fractional map, then $C_{\varphi}$ is a compact operator on $H^2(\ID)$ if and only if $||\varphi||_{\infty}:=\sup\{|\varphi(z)| : z \in \ID\} <1.$  If $\varphi$ is an automorphism of $\ID$, then, by the work of Jury, $C^*(T_z, C_{\varphi})/\mathcal{K}$ is isomorphic to a crossed product C$^*$-algebra of the form $C(\IT) \rtimes_{\alpha} \IZ$, if $\varphi$ is of infinite order, or the form $C(\IT) \rtimes_{\alpha} \IZ/q\IZ$, if $\varphi$ is of finite order $q$  \cite{Jury:2007F}.  Here $\IT$ denotes the unit circle, and the action $\alpha$ comes from the map $\varphi$.  Moreover, if $G$ is a discrete, abelian group of automorphisms of $\ID$, then $C^*(T_z, \{C_{\varphi}: \varphi \in G\})/\mathcal{K}$ is  isomorphic to a crossed product of $C(\IT)$ by $G$ \cite{Jury:2007G}. 

If $||\varphi||_{\infty} =1$ but $\varphi$ is not an automorphism of $\ID$, then the structure of the unital C$^*$-algebra generated by $C_{\varphi}$ and either $T_z$ or $\mathcal{K}$ depends on the configuration of the fixed points of $\varphi$.  There are three possibilities \cite{BLNS:2003, Shapiro:1993}:  \begin{enumerate} \item $\varphi$ fixes a point $\zeta \in \IT$ and has no other fixed points, i.e.\ $\varphi$ is a parabolic map.
\item $\varphi$ fixes a point $\zeta \in \IT$ and has an additional fixed point in $\IC \cup \{\infty\}$.
\item $\varphi$ has no fixed point in $\IT$.  In this case, there exist distinct points $\zeta, \eta \in \IT$ with $\varphi(\zeta)=\eta.$\end{enumerate}
One can distinguish between the first two cases by the value of $\varphi^{\prime}(\zeta).$  In both cases, $\varphi^{\prime}(\zeta)>0$, but $\varphi$ is parabolic if and only if $\varphi^{\prime}(\zeta)=1.$
Kriete, MacCluer, and Moorhouse have studied C$^*$-algebras related to non-automorphisms of the first and third types.  If  $\varphi$ is a parabolic, non-automorphism self-map of $\ID$, then $\mathcal{K} \subset C^*(C_{\varphi})$, and $C^*(C_{\varphi})/\mathcal{K}$ is isomorphic to $C([0,1])$ \cite{KrieteMacCluerMoorhousePP}.  For  a non-automorphism $\varphi$  of the third type, $C^*(T_z, C_{\varphi})/\mathcal{K}$ is isomorphic to a C$^*$-subalgebra of $C(\IT) \oplus M_2(C([0,1]))$ \cite{KrieteMacCluerMoorhouse:2007}.  A related description  for $C^*(T_z, C_{\varphi_1}, \ldots, C_{\varphi_n})/\mathcal{K}$ under certain conditions on the boundary points $\zeta_1, \ldots, \zeta_n$ and $\eta_1, \ldots, \eta_n$ was obtained in \cite{KrieteMacCluerMoorhouse:2009}.  We note that, in general, the work on the non-automorphism cases has used different techniques than those employed in  the automorphism setting.

In this paper, we consider the second non-automorphism case.  More specifically, we investigate the  unital  C$^*$-algebras $C^*(C_{\varphi}, \mathcal{K})$ and $C^*(C_{\varphi_1}, \ldots, C_{\varphi_n}, \mathcal{K})$, where $\varphi, \varphi_1, \ldots, \varphi_n$ are linear-fractional, non-automorphism self-maps of 
$\ID$ that fix $\zeta \in \IT$ and have a first derivative at $\zeta$ that is not equal to $1$.  We also study the C$^*$-algebra $C^*(\mathcal{F}_{\zeta})$, where $\mathcal{F}_{\zeta}$ is the collection of all composition operators induced by linear-fractional, non-automorphism self-maps of $\ID$ that fix the point $\zeta$.  Our arguments make extensive use of  known results about various types of linear-fractionally-induced composition operators, including automorphism-induced composition operators.  We include
 a collection of these results, along with basic facts about crossed product C$^*$-algebras, in Section \ref{prelim}.

In Section \ref{whatsinsection}, we determine which  linear-fractionally-induced composition operators are contained in $C^*(C_{\varphi}, \mathcal{K})$. As a part of this work, we show that, for all $\zeta \in \IT$,   $C^*(\mathcal{F}_{\zeta})$ is equal to the unital C$^*$-algebra generated by $\mathcal{K}$ and the collection of all composition operators induced by linear-fractional, non-automorphism self maps of $\ID$ that fix $\zeta$ and are not parabolic. 

We then apply these results in Section \ref{structuresection} to identify $C^*(\mathcal{F}_{\zeta})$, $C^*(C_{\varphi}, \mathcal{K})$,  and $C^*(C_{\varphi_1}, \ldots, C_{\varphi_n}, \mathcal{K})$ as subalgebras of C$^*$-algebras generated by  composition operators induced by parabolic, non-automorphism self-maps of $\ID$ and  unitary operators induced by automorphisms of $\ID$.  We prove that these larger C$^*$-algebras are isomorphic, modulo the ideal of compact operators, to crossed products of $C([0,1])$ by discrete groups.  By studying the images of  $C^*(\mathcal{F}_{\zeta})/\mathcal{K}$ and $C^*(C_{\varphi}, \mathcal{K})/\mathcal{K}$ under these isomorphisms, we obtain our main structure results, which demonstrate that these C$^*$-algebras are each isomorphic to a unitization of a crossed product of $C_0([0,1])$ by an appropriate discrete group.  Here, $C_0([0,1])$ denotes the collection of all continuous functions on $[0,1]$ that vanish at $0.$  We also obtain a similar theorem for $C^*(C_{\varphi_1}, \ldots, C_{\varphi_n}, \mathcal{K})/\mathcal{K}$ under a reasonable assumption  on the values of $\varphi_1^{\prime}(\zeta), \ldots, \varphi_n^{\prime}(\zeta).$  

In Section \ref{applicationsection}, we use the crossed product structure of $C^*(C_{\varphi}, \mathcal{K})/\mathcal{K}$ to determine the K-theory of $C^*(C_{\varphi}, \mathcal{K})$ and show that the Fredholm index of every Fredholm operator in $C^*(C_{\varphi}, \mathcal{K})$ is zero.   We also calculate the essential spectra of  operators in this C$^*$-algebra having a specific form.

Finally, in Section \ref{includeshift}, we determine the structure of $C^*(T_z, C_{\varphi})/\mathcal{K}$, where $\varphi$ is a linear-fractional self-map of $\ID$ that fixes a point $\zeta \in \IT$.  We combine these results with the work of Jury \cite{Jury:2007F} and Kriete, MacCluer, and Moorhouse \cite{KrieteMacCluerMoorhousePP, KrieteMacCluerMoorhouse:2007} to obtain a full description of the structure of $C^*(T_z, C_{\varphi})/\mathcal{K}$ for any linear-fractional self-map $\varphi$ of $\ID.$

\section{Preliminaries}\label{prelim}

\subsection{Operators on $H^2(\ID)$ and Relations Between Them}

The Hardy space of the disk, $H^2(\ID),$ is the space of all analytic functions $f$ on $\ID$ whose power series $f(z)= \sum_{n=0}^{\infty} a_n z^n$ satisfy $$||f||_{H^2(\ID)}^2 := \sum_{n=0}^{\infty} |a_n|^2 < \infty.$$  All operators in this paper will act on $H^2(\ID)$.  If $A$ is a bounded linear operator on $H^2(\ID)$, we will denote the coset of $A$ in the Calkin algebra $\mathcal{B}(H^2(\ID))/\mathcal{K}$ by $[A]$.  For more information on the Hardy space, see \cite{CowenMacCluer:1995} and \cite{Duren:1970}.

  If the linear-fractional map $\varphi(z)=(az+b)/(cz+d)$ takes the unit disk into itself, then the Krein adjoint of $\varphi$, which is defined by $$\sigma(z)=\sigma_{\varphi}(z)=\frac{\overline{a}z-\overline{c}}{-\overline{b}z+\overline{d}},$$ is also a self-map of $\ID$.  If $\varphi$ is  not an automorphism of $\ID$ and $\varphi(\zeta)=\eta$ for some $\zeta, \eta \in \IT$, then $\sigma(\eta)=\zeta$, $|\sigma^{\prime}(\eta)|=|\varphi^{\prime}(\zeta)|^{-1}$, and $\sigma$ is not an automorphism of $\ID$.  Moreover, if $\varphi$ is a linear-fractional, non-automorphism self-map of $\ID$ that maps $\zeta \in \IT$ to a point in $\IT$, then, by \cite[Theorem 3.1]{KrieteMacCluerMoorhouse:2007}, there exists a compact  operator $K$ such that  \begin{align}\label{adjform} C_{\varphi}^* = |\varphi^{\prime}(\zeta)|^{-1} C_{\sigma} + K. \end{align}

  Although our work in motivated by an interest in maps that are not parabolic, we will make extensive use of parabolic-induced composition operators.  Every parabolic, linear-fractional self-map of $\ID$ that fixes the point $\zeta \in \IT$ has the form
$$\rho_{\zeta, a}(z)=\frac{(2-a)z + a \zeta}{-a\overline{\zeta}z + (2+a)}$$ for some $a \in \IC$ with $\RE a \geq 0$.  The map $\rho_{\zeta, a}$ is an 
automorphism of $\ID$ if and only if  $\RE a =0$.  The number $a$ is called the translation number of $\rho_{\zeta, a}$ since $\rho_{\zeta, a}$ 
is conjugate to translation  by $a$ on $\Upsilon:=\{ z \in \IC : \RE z >0\}$ via the map $z \mapsto (\zeta+z)/(\zeta-z).$  
For $\zeta \in \IT$, 
we let $\IP_{\zeta} := \{C_{\rho_{\zeta, a}} : a \in \Upsilon\}$ denote the set of all composition operators induced by parabolic, non-automorphism self-maps of $\ID$ that fix $\zeta$.  The structure of the unital C$^*$-algebra generated by  $\IP_{\zeta}$, modulo the ideal of compact operators, is described by the following theorem:

\begin{theorem}{\rm \cite[Theorem 3]{KrieteMacCluerMoorhousePP}}\label{paraboliciso} Let $\zeta \in \IT$.  Then $\mathcal{K} \subseteq C^*(\IP_{\zeta})$, and there is a unique $*$-isomorphism 
$\Gamma_{\zeta}: C([0,1]) \rightarrow C^*(\IP_{\zeta}) / \mathcal{K}$ such that $\Gamma_{\zeta}(x^a)= [C_{\rho_{\zeta, a}}]$ for all $a \in \Upsilon.$  Moreover, if $\rho$ is any parabolic, non-automorphism self-map of $\ID$ that fixes $\zeta$, then $C^*(C_{\rho})=C^*(\IP_{\zeta}).$ \end{theorem}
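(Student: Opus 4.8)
The plan is to reduce to $\zeta=1$, record the two algebraic relations satisfied by the cosets $[C_{\rho_{\zeta,a}}]$, build $\Gamma_{\zeta}$ from a concrete half-plane model, and deduce injectivity from the essential spectrum of a single parabolic composition operator. Conjugation by the unitary $C_{u}$ with $u(z)=\overline{\zeta}z$ preserves $\mathcal{K}$ and carries $\rho_{1,a}$ to $\rho_{\zeta,a}$, so I would first reduce to $\zeta=1$; uniqueness of $\Gamma_{\zeta}$ is then automatic, since the $[C_{\rho_{\zeta,a}}]$ together with $[I]$ generate the target. Next come the two relations. Because $\rho_{\zeta,a}$ is conjugate to translation by $a$ on $\Upsilon$, one has $\rho_{\zeta,b}\circ\rho_{\zeta,a}=\rho_{\zeta,a+b}$, hence $C_{\rho_{\zeta,a}}C_{\rho_{\zeta,b}}=C_{\rho_{\zeta,a+b}}$, so $a\mapsto C_{\rho_{\zeta,a}}$ is a homomorphism of $(\Upsilon,+)$ onto a commuting family. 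And a direct computation shows the Krein adjoint of $\rho_{\zeta,a}$ is $\rho_{\zeta,\overline{a}}$; since $\rho_{\zeta,a}$ is parabolic, $\rho_{\zeta,a}'(\zeta)=1$, so \eqref{adjform} yields $C_{\rho_{\zeta,a}}^{*}=C_{\rho_{\zeta,\overline{a}}}+K$ for some compact $K$, i.e.\ $[C_{\rho_{\zeta,a}}]^{*}=[C_{\rho_{\zeta,\overline{a}}}]$. Hence $C^{*}(\IP_{\zeta})/\mathcal{K}$ is the unital \emph{commutative} C$^{*}$-algebra generated by $\{[C_{\rho_{\zeta,a}}]:a\in\Upsilon\}$, and $[C_{\rho_{\zeta,t}}]=[C_{\rho_{\zeta,t/2}}]^{*}[C_{\rho_{\zeta,t/2}}]\ge 0$ for every real $t>0$.

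To obtain $\mathcal{K}\subseteq C^{*}(\IP_{\zeta})$ I would show, more generally, that $\mathcal{K}\subseteq C^{*}(C_{\rho_{\zeta,a_{0}}})$ for every $a_{0}\in\Upsilon$ (this is also needed for the last assertion). The algebra $C^{*}(C_{\rho_{\zeta,a_{0}}})$ acts irreducibly on $H^{2}(\ID)$ (a standard fact for composition operators), so it is enough to exhibit one nonzero compact operator in it. Because $\rho_{\zeta,a_{0}}(0)\neq 0$ we have $\|C_{\rho_{\zeta,a_{0}}}\|>1$, while $[C_{\rho_{\zeta,a_{0}}}]$ is normal and $\sigma(C_{\rho_{\zeta,a_{0}}})$ lies in the closed unit disk by the known spectrum of a parabolic linear-fractional composition operator (Cowen; see \cite{CowenMacCluer:1995}), so $\|C_{\rho_{\zeta,a_{0}}}\|_{\mathrm{ess}}=r([C_{\rho_{\zeta,a_{0}}}])\le 1<\|C_{\rho_{\zeta,a_{0}}}\|$. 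Thus the largest eigenvalue of the positive operator $C_{\rho_{\zeta,a_{0}}}^{*}C_{\rho_{\zeta,a_{0}}}$ is isolated above its essential spectrum, and the corresponding spectral projection is a nonzero finite-rank operator in $C^{*}(C_{\rho_{\zeta,a_{0}}})$; irreducibility then gives $\mathcal{K}\subseteq C^{*}(C_{\rho_{\zeta,a_{0}}})$.

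For the isomorphism, observe that $\{x^{a}:a\in\Upsilon\}$ together with the constants generate $C([0,1])$ (Stone--Weierstrass: $x=x^{1}$ separates points and $\overline{x^{a}}=x^{\overline{a}}$ on $[0,1]$), the closed $*$-subalgebra generated by the $x^{a}$ alone being $C_{0}([0,1])$, whose unitization is $C([0,1])$; and these functions satisfy exactly $x^{a}x^{b}=x^{a+b}$ and $\overline{x^{a}}=x^{\overline{a}}$, matching the relations above. The substantive point, and the main obstacle, is that $C^{*}(\IP_{\zeta})/\mathcal{K}$ carries \emph{no more} relations than these, so that $x^{a}\mapsto[C_{\rho_{\zeta,a}}]$, $1\mapsto[I]$, extends to a well-defined bounded $*$-homomorphism $\Gamma_{\zeta}:C([0,1])\to C^{*}(\IP_{\zeta})/\mathcal{K}$, necessarily unital, surjective, and unique. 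I would prove this by passing, through the Cayley transform $\ID\to\Upsilon$ and the induced identification of $H^{2}(\ID)$ with a Hardy space of the half-plane, to the concrete model in which $C_{\rho_{\zeta,a}}$ is an explicit weighted operator that, modulo the compacts, becomes the Wiener--Hopf operator with symbol $t\mapsto e^{-at}$ on $(0,\infty)$; the structure of the half-line Wiener--Hopf algebra then simultaneously delivers $\mathcal{K}$ and the homomorphism $\Gamma_{\zeta}$, under the substitution $x=e^{-t}$.

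Injectivity of $\Gamma_{\zeta}$ then follows from an essential spectrum computation. Its kernel is $\{f\in C([0,1]):f|_{F}=0\}$ for some closed $F\subseteq[0,1]$, so $\Gamma_{\zeta}$ factors as a quotient onto $C(F)$ followed by an embedding carrying $x^{t}|_{F}$ to $[C_{\rho_{\zeta,t}}]$; hence $\{x^{t}:x\in F\}=\sigma_{\mathrm{ess}}(C_{\rho_{\zeta,t}})$ for real $t>0$. Now Cowen's formula gives $\sigma(C_{\rho_{\zeta,t}})=\{0\}\cup\{e^{-st}:s\ge 0\}=[0,1]$, and since $[0,1]$ has no isolated points the Fredholm domain of $C_{\rho_{\zeta,t}}$ is connected and meets the resolvent set, so $[0,1]\setminus\sigma_{\mathrm{ess}}(C_{\rho_{\zeta,t}})$ is relatively open \emph{and} discrete in $[0,1]$, hence empty; thus $\sigma_{\mathrm{ess}}(C_{\rho_{\zeta,t}})=[0,1]$, and since $x\mapsto x^{t}$ is a homeomorphism of $[0,1]$ we conclude $F=[0,1]$, so $\Gamma_{\zeta}$ is a $*$-isomorphism. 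Finally, if $\rho=\rho_{\zeta,a_{0}}$ is any parabolic non-automorphism fixing $\zeta$, then $C^{*}(C_{\rho})\subseteq C^{*}(\IP_{\zeta})$ is clear and $\mathcal{K}\subseteq C^{*}(C_{\rho})$ by the argument above, so $C^{*}(C_{\rho})/\mathcal{K}=C^{*}([C_{\rho_{\zeta,a_{0}}}],[I])=\Gamma_{\zeta}(C^{*}(x^{a_{0}},1))$; since $x^{a_{0}}$ separates the points of $[0,1]$, Stone--Weierstrass gives $C^{*}(x^{a_{0}},1)=C([0,1])$, whence $C^{*}(C_{\rho})/\mathcal{K}=C^{*}(\IP_{\zeta})/\mathcal{K}$ and $C^{*}(C_{\rho})=C^{*}(\IP_{\zeta})$.
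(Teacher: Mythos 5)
This statement is imported by the paper from \cite[Theorem 3]{KrieteMacCluerMoorhousePP} and is not proved in the text, so there is no in-paper argument to measure you against; your proposal has to stand on its own, and most of it does. The reduction to $\zeta=1$, the two relations $[C_{\rho_{\zeta,a}}][C_{\rho_{\zeta,b}}]=[C_{\rho_{\zeta,a+b}}]$ and $[C_{\rho_{\zeta,a}}]^*=[C_{\rho_{\zeta,\overline{a}}}]$, the Stone--Weierstrass observations (including the final step that $C^*(x^{a_0},1)=C([0,1])$ because $\RE a_0>0$ makes $x\mapsto x^{a_0}$ injective on $[0,1]$), and the essential-spectrum route to injectivity are all correct. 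Two remarks on the construction of $\Gamma_{\zeta}$, which you rightly identify as the crux but only assert: in the $L^2(0,\infty)$ model the operator you obtain is, modulo compacts, the \emph{multiplication} operator $M_{e^{-as}}$, not a Wiener--Hopf operator --- the weight picked up by the Cayley unitary is $1+a/(w+1)$, whose non-identity part is a Wiener--Hopf operator with symbol in $C_0(\IR)$, and its product with $M_{e^{-as}}$ is compact by the standard ``$f(D)g(X)$ is compact for $f,g\in C_0$'' fact. That computation needs to be carried out, and once it is, injectivity of $\Gamma_{\zeta}$ is immediate (a nonzero multiplication operator by a continuous function on $L^2(0,\infty)$ has essential norm equal to its norm), so your separate argument via Cowen's spectral formula and connectedness of the Fredholm domain, though valid, becomes unnecessary.

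The genuine gap is the parenthetical ``the algebra $C^*(C_{\rho_{\zeta,a_0}})$ acts irreducibly on $H^2(\ID)$ (a standard fact for composition operators).'' This is false as a general statement: for $\varphi(z)=-z$ the operator $C_{\varphi}$ is a self-adjoint unitary and $C^*(C_{\varphi})$ is two-dimensional and commutative. Irreducibility is true for the parabolic non-automorphisms, but it must be proved, and it is load-bearing: your spectral-projection argument (which is otherwise correct) only produces a nonzero finite-rank operator in $C^*(C_{\rho_{\zeta,a_0}})$, which without irreducibility yields a nonzero ideal of compacts rather than all of $\mathcal{K}$. A correct argument in this setting: a projection $P$ commuting with $C_{\rho}$ and $C_{\rho}^*$ satisfies $C_{\rho}P1=PC_{\rho}1=P1$; the orbit $\rho_n(0)\to\zeta$ of a parabolic non-automorphism satisfies $1-|\rho_n(0)|\asymp 1/n$ and so fails the Blaschke condition, whence any $f\in H^2(\ID)$ with $f\circ\rho=f$ is constant (as $f-f(0)$ vanishes on a non-Blaschke sequence) and $P1\in\{0,1\}$; assuming $1\in\operatorname{ran}P$, the range of $P$ contains $C_{\rho}^{*n}1=K_{\rho_n(0)}$ for all $n$, and these reproducing kernels span a dense subspace for the same non-Blaschke reason, forcing $P=I$. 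With that lemma supplied, both your proof of $\mathcal{K}\subseteq C^*(C_{\rho_{\zeta,a_0}})$ and the final assertion $C^*(C_{\rho})=C^*(\IP_{\zeta})$ go through.
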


The next theorem explains two important relationships between the parabolic, non-automorphism self-maps of $\ID$ and the linear-fractional self-maps $\varphi$ of $\ID$ that satisfy $||\varphi||_{\infty}=1$.

      \begin{theorem} {\rm \cite[Proposition 1]{KrieteMacCluerMoorhouse:2009} }\label{parabolicandourmaps} Suppose $\varphi$ is a linear-fractional self-map of $\ID$ that satisfies $\varphi(\zeta)= \eta$ for some $\zeta, \eta \in \IT$, and let $\sigma$ be its Krein adjoint.
       \begin{enumerate} 
       \item If $\varphi$ is not an automorphism of $\ID$, then there exist unique, positive numbers $b$ and $c$ such that $\rho_{\eta, b}(\ID)=\varphi(\ID)$ and $\rho_{\zeta, c}(\ID)=\sigma(\ID).$  Moreover, $\varphi \circ \sigma=\rho_{\eta, 2b}$ and $\sigma \circ \varphi=\rho_{\zeta, 2c}$.
       \item If $a \in \Upsilon$ or $a=0$, then \begin{align*}\label{commute}C_{\varphi}C_{\rho_{\eta, a}} = C_{\rho_{\zeta, |\varphi^{\prime}(\zeta)|a}} C_{\varphi}.\end{align*}
 \end{enumerate} \end{theorem}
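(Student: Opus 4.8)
The plan is to conjugate everything to the right half-plane $\Upsilon$ by the Cayley-type maps $\gamma_{\zeta}(z)=(\zeta+z)/(\zeta-z)$, which carry $\ID$ onto $\Upsilon$ with $\gamma_{\zeta}(\zeta)=\infty$ and conjugate $\rho_{\zeta,a}$ to the translation $w\mapsto w+a$. Since $\varphi(\zeta)=\eta$, the conjugate $\Phi:=\gamma_{\eta}\circ\varphi\circ\gamma_{\zeta}^{-1}$ is a linear-fractional self-map of $\Upsilon$ fixing $\infty$, hence affine: $\Phi(w)=\lambda w+\mu$. A short half-plane argument gives $\lambda>0$ and $\RE\mu\ge 0$, with $\RE\mu>0$ exactly when $\varphi$ is not an automorphism, and a standard angular-derivative/chain-rule computation at the boundary fixed point gives $\lambda=|\varphi'(\zeta)|^{-1}$. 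Since $\gamma_{\eta}$ carries $\varphi(\ID)=\gamma_{\eta}^{-1}(\Phi(\Upsilon))$ to $\{\RE w>\RE\mu\}$ and carries $\rho_{\eta,t}(\ID)$ to $\{\RE w>t\}$ for $t>0$, and two such half-planes coincide iff the parameters coincide, we obtain both the existence ($b=\RE\mu$, positive since $\varphi$ is a non-automorphism) and the uniqueness of $b$ in part (1).

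Part (2) is the quick half. Using $C_{\psi}C_{\phi}=C_{\phi\circ\psi}$, the asserted operator identity is equivalent to the functional identity $\rho_{\eta,a}\circ\varphi=\varphi\circ\rho_{\zeta,|\varphi'(\zeta)|a}$ (trivial when $a=0$, since $\rho_{\eta,0}$ and $\rho_{\zeta,0}$ are the identity). Applying the conjugation $g\mapsto\gamma_{\eta}\circ g\circ\gamma_{\zeta}^{-1}$ to both sides turns the left side into $(w\mapsto w+a)\circ\Phi$, i.e.\ $w\mapsto\lambda w+\mu+a$, and the right side into $\Phi\circ(w\mapsto w+|\varphi'(\zeta)|a)$, i.e.\ $w\mapsto\lambda w+\lambda|\varphi'(\zeta)|a+\mu=\lambda w+\mu+a$, using $\lambda|\varphi'(\zeta)|=1$. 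The two sides agree, so part (2) follows.

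For part (1) it remains to locate the Krein adjoint in half-plane coordinates. Set $\Sigma:=\gamma_{\zeta}\circ\sigma\circ\gamma_{\eta}^{-1}$; since $\sigma(\eta)=\zeta$ this is an affine self-map of $\Upsilon$ fixing $\infty$. The key computation is that $\Sigma(w)=\lambda^{-1}(w+\overline{\mu})$: representing the maps by $2\times 2$ matrices (under which the Krein adjoint is $M\mapsto JM^{*}J$ with $J=\mathrm{diag}(1,-1)$) and using that conjugation by the Cayley transform turns $J$ into a scalar multiple of $W(\gamma_{\zeta}^{*})^{-1}$, where $W$ is the matrix of $w\mapsto 1/w$, one finds that $\Sigma$ corresponds to $W\Phi^{*}W$, which is the affine map $w\mapsto\lambda^{-1}(w+\overline{\mu})$. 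Granting this, everything in part (1) drops out: $\gamma_{\zeta}$ carries $\sigma(\ID)=\gamma_{\zeta}^{-1}(\Sigma(\Upsilon))$ to $\{\RE w>\lambda^{-1}\RE\mu\}$, so $c:=\lambda^{-1}\RE\mu=|\varphi'(\zeta)|\,b>0$ is the unique positive number with $\rho_{\zeta,c}(\ID)=\sigma(\ID)$; and since $\varphi\circ\sigma$ and $\sigma\circ\varphi$ conjugate, by $\gamma_{\eta}$ and $\gamma_{\zeta}$ respectively, to $\Phi\circ\Sigma\colon w\mapsto w+(\mu+\overline{\mu})=w+2b$ and $\Sigma\circ\Phi\colon w\mapsto w+\lambda^{-1}(\mu+\overline{\mu})=w+2c$, we conclude $\varphi\circ\sigma=\rho_{\eta,2b}$ and $\sigma\circ\varphi=\rho_{\zeta,2c}$.

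The main obstacle is this single computational lemma in part (1): the exact translation term $\overline{\mu}/\lambda$ of the conjugated Krein adjoint. It is not forced by the soft data---that $\Sigma$ is affine, fixes $\infty$, and satisfies $|\sigma'(\eta)|=|\varphi'(\zeta)|^{-1}$, which only yields the linear coefficient $\lambda^{-1}$. One can see conceptually that $\varphi\circ\sigma$ fixes $\eta\in\IT$ with angular derivative $\varphi'(\zeta)\sigma'(\eta)$ of modulus $1$, hence---by the Julia--Carath\'{e}odory theorem, such a derivative being a positive real---equal to $1$, so $\varphi\circ\sigma$ is a non-surjective linear-fractional self-map of $\ID$ fixing $\eta$ with unit multiplier, i.e.\ a parabolic non-automorphism fixing $\eta$; but this alone does not identify its translation number, which is exactly why the explicit (matrix) calculation of $\Sigma$ is needed. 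Everything else in the argument is entirely formal once one works in $\Upsilon$.
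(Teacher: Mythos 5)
This theorem is stated in the paper as an imported result, cited to \cite[Proposition 1]{KrieteMacCluerMoorhouse:2009}; the paper contains no proof of it, so there is nothing internal to compare your argument against. On its own terms, your proof is correct and complete. Part (2) and the existence/uniqueness of $b$ are, as you say, formal once everything is conjugated to $\Upsilon$ and one knows $\lambda=|\varphi'(\zeta)|^{-1}$ (which follows from the standard boundary computation $\RE\gamma_{\eta}(\varphi(z))/\RE\gamma_{\zeta}(z)\to|\varphi'(\zeta)|^{-1}$). You correctly isolate the one genuinely computational ingredient, namely that the conjugated Krein adjoint is $\Sigma(w)=\lambda^{-1}(w+\overline{\mu})$; I checked the matrix identity you invoke --- with $G_{\zeta}$ the matrix of $\gamma_{\zeta}$ one has $G_{\zeta}^{-1}W(G_{\zeta}^{*})^{-1}=-\tfrac12 J$ and $G_{\eta}^{*}WG_{\eta}=-2J$, so the matrix of $\Sigma$ is exactly $WN^{*}W$ where $N$ is the matrix of $\Phi$, and $W\bigl(\begin{smallmatrix}\lambda&\mu\\0&1\end{smallmatrix}\bigr)^{*}W=\bigl(\begin{smallmatrix}1&\overline{\mu}\\0&\lambda\end{smallmatrix}\bigr)$ --- and it is right. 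From there $c=\lambda^{-1}\RE\mu=|\varphi'(\zeta)|b$ and the identities $\Phi\circ\Sigma(w)=w+2b$, $\Sigma\circ\Phi(w)=w+2c$ give everything claimed. Your closing remark is also apt: the soft information (that $\varphi\circ\sigma$ is parabolic with multiplier $1$ at $\eta$) does not determine the translation number, so the explicit identification of $\Sigma$ cannot be avoided.
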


      If $\varphi$ is an analytic self-map of $\ID$, we denote by $U_{\varphi}$ the partial isometry that appears in the polar decomposition of $C_{\varphi}$.  If $\varphi$ is a linear-fractional map, then $U_{\varphi}$ is a unitary operator.

  For every function $f \in H^2(\ID)$, the radial limit $f(e^{i\theta}):=\lim_{r \rightarrow 0} f(re^{i\theta})$ exists almost everywhere on $\IT$, and one can view $H^2(\ID)$ as a subspace of $L^2(\IT).$  Let $P$ denote the orthogonal projection of $L^2(\IT)$ onto $H^2(\ID)$.  If $g \in L^{\infty}(\IT)$, the Toeplitz operator $T_g$ is defined on $H^2(\ID)$ by $T_gf=Pgf$ for all $f \in H^2(\ID).$  It is a well-known result  that $C^*(T_z) = C^*(\{T_f : f \in C(\IT)\})$ and $C^*(T_z)/\mathcal{K}$ is isomorphic to $C(\IT)$ \cite{Coburn:1967, Coburn:1969}.  There exists a vast literature on Toeplitz operators (see, for example, \cite{BottcherSilbermann:2006, Douglas:1998, Douglas:1973}), and we refer the reader to those sources for more information.

We will use Toeplitz operators and unitary operators $U_{\varphi}$ induced by automorphisms to   write the cosets of composition operators induced by non-automorphisms in forms that reveal the underlying structures of the C$^*$-algebras under consideration.  The following two results will be especially important in our investigations.

\begin{theorem}\label{comprel} {\rm \cite{KrieteMacCluerMoorhouse:2007}}
 Suppose $\varphi$ is a linear-fractional, non-automorphism self-map of $\ID$ that satisfies $\varphi(\zeta)=\zeta$ for some $\zeta \in \IT$.  If $w \in C(\IT)$, then there exists compact operators $K_1$ and $K_2$ such that $$C_{\varphi}T_w=T_wC_{\varphi}+ K_1 = w(\zeta)C_{\varphi}+K_2.$$ \end{theorem}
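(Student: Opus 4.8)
The plan is to reduce both identities to a single assertion: for every $w\in C(\IT)$,
\[
C_{\varphi}T_w-w(\zeta)C_{\varphi}\in\mathcal{K}\qquad\text{and}\qquad T_wC_{\varphi}-w(\zeta)C_{\varphi}\in\mathcal{K}.
\]
Granting this, one simply takes $K_2=C_{\varphi}T_w-w(\zeta)C_{\varphi}$ and $K_1=(C_{\varphi}T_w-w(\zeta)C_{\varphi})-(T_wC_{\varphi}-w(\zeta)C_{\varphi})$. For a linear-fractional, non-automorphism self-map $\psi$ of $\ID$ with $\psi(\zeta)=\zeta$, let $\mathcal{S}(\psi)$ denote the set of $w\in C(\IT)$ for which both memberships hold with $\varphi$ replaced by $\psi$. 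Then $\mathcal{S}(\psi)$ is a norm-closed linear subspace of $C(\IT)$ containing the constants, and it is closed under multiplication: since $f\mapsto[T_f]$ is a unital $*$-homomorphism of $C(\IT)$ into the Calkin algebra (Coburn \cite{Coburn:1967,Coburn:1969}), so that $T_{vw}-T_vT_w\in\mathcal{K}$, one finds for $v,w\in\mathcal{S}(\psi)$ that $C_{\psi}T_{vw}\equiv C_{\psi}T_vT_w\equiv v(\zeta)\,C_{\psi}T_w\equiv v(\zeta)w(\zeta)\,C_{\psi}$ modulo $\mathcal{K}$, and likewise $T_{vw}C_{\psi}\equiv v(\zeta)w(\zeta)\,C_{\psi}$. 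As the trigonometric polynomials are dense in $C(\IT)$, it therefore suffices to show $z,\bar z\in\mathcal{S}(\psi)$ for every such $\psi$.

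For the generator $z$: since $zg\in H^2$ whenever $g\in H^2$, we have $T_zC_{\psi}=M_zC_{\psi}$ and $C_{\psi}T_z=M_{\psi}C_{\psi}$, where $M_g$ is multiplication by $g$. Thus $T_zC_{\psi}-\zeta C_{\psi}=M_{z-\zeta}C_{\psi}$; and if $\psi(z)=(az+b)/(cz+d)$, then $\psi(\zeta)=\zeta$ forces $\psi(z)-\zeta=(a-\zeta c)(z-\zeta)/(cz+d)$ with $a-\zeta c\neq 0$ and $1/(cz+d)\in H^{\infty}$, so $C_{\psi}T_z-\zeta C_{\psi}=M_{\psi-\zeta}C_{\psi}=(a-\zeta c)\,M_{1/(cz+d)}\,M_{z-\zeta}C_{\psi}$. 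Hence $z\in\mathcal{S}(\psi)$ provided $M_{z-\zeta}C_{\psi}$ is compact, which is the crux and is handled below. Granting it, the case $w=\bar z$ follows by duality: by \eqref{adjform}, $C_{\psi}^{*}=|\psi^{\prime}(\zeta)|^{-1}C_{\tau}+K$ with $K$ compact, where $\tau=\sigma_{\psi}$ is the Krein adjoint of $\psi$; since $\tau(\zeta)=\zeta$ and $\tau$ is again a linear-fractional, non-automorphism self-map of $\ID$, the case just treated gives $C_{\tau}T_z\equiv\zeta C_{\tau}$ and $T_zC_{\tau}\equiv\zeta C_{\tau}$, whence $C_{\psi}^{*}T_z\equiv\zeta C_{\psi}^{*}$ and $T_zC_{\psi}^{*}\equiv\zeta C_{\psi}^{*}$; taking adjoints yields $C_{\psi}T_{\bar z}\equiv\bar\zeta C_{\psi}$ and $T_{\bar z}C_{\psi}\equiv\bar\zeta C_{\psi}$, i.e.\ $\bar z\in\mathcal{S}(\psi)$.

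It remains to prove that $M_{z-\zeta}C_{\psi}$ is compact. Let $\{f_n\}$ be a bounded sequence in $H^2$ with $f_n\to 0$ weakly; since point evaluations are bounded functionals on $H^2$, $f_n\to 0$ uniformly on compact subsets of $\ID$. As $\psi$ is a linear-fractional, non-automorphism self-map of $\ID$ with $\psi(\zeta)=\zeta$, the image $\psi(\IT)$ is a circle internally tangent to $\IT$ at $\zeta$, of some radius $r\in(0,1)$ and center $(1-r)\zeta$; in particular $|\psi(e^{i\theta})|<1$ for a.e.\ $\theta$, and (since $\psi$ extends continuously to $\overline{\ID}$ minus its pole, which lies off $\overline{\ID}$) the boundary function of $(z-\zeta)(f_n\circ\psi)\in H^2$ is $(e^{i\theta}-\zeta)f_n(\psi(e^{i\theta}))$ a.e. Hence
\[
\bigl\|M_{z-\zeta}C_{\psi}f_n\bigr\|_{H^2}^{2}=\frac{1}{2\pi}\int_0^{2\pi}\bigl|e^{i\theta}-\zeta\bigr|^{2}\,\bigl|f_n(\psi(e^{i\theta}))\bigr|^{2}\,d\theta .
\]
The integrand tends to $0$ pointwise a.e.\ and, by the pointwise bound $|f_n(w)|^{2}\le\|f_n\|_{H^2}^{2}/(1-|w|^{2})$, is dominated by $|e^{i\theta}-\zeta|^{2}/(1-|\psi(e^{i\theta})|^{2})$. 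A direct computation with the tangent circle shows $1-|w|^{2}=\tfrac{1-r}{r}\,|w-\zeta|^{2}$ for every $w\in\psi(\IT)$; combined with $\psi(e^{i\theta})-\zeta=(a-\zeta c)(e^{i\theta}-\zeta)/(ce^{i\theta}+d)$ this gives
\[
\frac{|e^{i\theta}-\zeta|^{2}}{1-|\psi(e^{i\theta})|^{2}}=\frac{r}{1-r}\cdot\frac{|ce^{i\theta}+d|^{2}}{|a-\zeta c|^{2}},
\]
which is bounded on $\IT$. So the dominating function lies in $L^{1}(\IT)$, and dominated convergence gives $\|M_{z-\zeta}C_{\psi}f_n\|\to 0$; thus $M_{z-\zeta}C_{\psi}$ is compact, completing the argument.

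The main obstacle is this last compactness statement, and specifically the quadratic-contact estimate $1-|w|^{2}$ comparable to $|w-\zeta|^{2}$ on $\psi(\IT)$: it is exactly this that keeps $|z-\zeta|^{2}/(1-|\psi|^{2})$ bounded and lets dominated convergence run, and it is where the non-automorphism hypothesis is essential (an automorphism would give $r=1$ and the estimate would fail). It also explains why the factor $z-\zeta$, vanishing to first order at $\zeta$, is the right thing to multiply by, rather than an arbitrary element of $C(\IT)$ that merely vanishes at $\zeta$.
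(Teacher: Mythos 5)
The paper does not actually prove this statement: it is imported verbatim from \cite{KrieteMacCluerMoorhouse:2007} and used as a black box, so there is no internal proof to compare against. Your proposal supplies a complete, self-contained proof, and I believe it is correct. The reduction is clean: $\mathcal{S}(\psi)$ is a closed, unital, self-adjoint subalgebra of $C(\IT)$ (multiplicativity via compactness of the Toeplitz semicommutator $T_{vw}-T_vT_w$), so by Stone--Weierstrass it suffices to treat $w=z$ and $w=\bar z$; the case $w=z$ reduces, via the factorization $\psi(z)-\zeta=(a-\zeta c)(z-\zeta)/(cz+d)$ with $1/(cz+d)\in H^{\infty}$, to compactness of $M_{z-\zeta}C_{\psi}$; and $w=\bar z$ follows by taking adjoints through the Krein-adjoint identity (\ref{adjform}), since $\sigma_{\psi}$ again fixes $\zeta$ and is a non-automorphism. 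The compactness argument is the real content, and your computation checks out: for $w$ on the circle $\psi(\IT)$ of radius $r$ internally tangent to $\IT$ at $\zeta$ one indeed has $1-|w|^2=\tfrac{1-r}{r}|w-\zeta|^2$, which exactly cancels the first-order vanishing of $z-\zeta$ and makes the dominating function bounded; this is also precisely where the non-automorphism hypothesis ($r<1$) enters, as you note. Two small points worth recording explicitly if this were written up: $a-\zeta c\neq 0$ because otherwise $ad-bc=0$, and the norm-closedness of $\mathcal{S}(\psi)$ follows since $w\mapsto C_{\psi}T_w-w(\zeta)C_{\psi}$ is a bounded linear map into $\mathcal{B}(H^2(\ID))$ and $\mathcal{K}$ is closed. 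Neither is a gap, just housekeeping.
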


   \begin{theorem}{\rm \cite{BourdonMacCluer:2007, Jury:2007G}}\label{unitaryrel}    Suppose $\gamma$ is an automorphism of $\ID$.    \begin{enumerate}
   \item There exists a compact operator $K$ such that $$U_{\gamma} = T_{w_{\gamma}}C_{\gamma} + K,$$ where $w_{\gamma}(z)=\sqrt{1-|\gamma^{-1}(0)|^2}/|1-\overline{\gamma^{-1}(0)}z|$ for all $z \in \IT.$
   \item If $\gamma_1$ is also an automorphism of $\ID$, then there exists a compact operator $K^{\prime}$ such that $$U_{\gamma}U_{\gamma_1}=U_{\gamma_1 \circ \gamma} + K^{\prime}.$$
   \end{enumerate}\end{theorem}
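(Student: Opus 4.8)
The plan is to prove both statements by renormalizing $C_\gamma$ so that it becomes an invertible analytic Toeplitz operator times a \emph{unitary} weighted composition operator. Write $\gamma(z)=(az+b)/(\bar b z+\bar a)$ with $|a|^2-|b|^2=1$; then $\gamma'(z)=(\bar b z+\bar a)^{-2}$, which has the analytic, zero-free square root $(\gamma')^{1/2}(z):=(\bar b z+\bar a)^{-1}$, so one may set $W_\gamma:=T_{(\gamma')^{1/2}}C_\gamma$, i.e.\ $W_\gamma f=(\gamma')^{1/2}\cdot(f\circ\gamma)$. First I would check that $W_\gamma$ is unitary on $H^2(\ID)$: it carries $H^2(\ID)$ into itself since $(\gamma')^{1/2}\in H^{\infty}$ and $C_\gamma$ is bounded; it is isometric on $L^2(\IT)$ via the change of variables $d\psi=|\gamma'(e^{i\theta})|\,d\theta$ associated to the boundary map $\gamma|_{\IT}$; and $W_{\gamma^{-1}}$ is its inverse by the chain rule. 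Setting $h(z):=\bar b z+\bar a$, this says $C_\gamma=T_h W_\gamma$, where $T_h$ is an invertible analytic Toeplitz operator and $W_\gamma$ is unitary.

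For part (1) I would then read the polar decomposition off this factorization. Since $W_\gamma$ is unitary, $|C_\gamma|=W_\gamma^*|T_h|W_\gamma$ and hence $U_\gamma=U_{T_h}W_\gamma$, where $U_{T_h}=T_h|T_h|^{-1}$ is the polar part of the invertible operator $T_h$. Now $|T_h|^2=T_h^*T_h=T_{|h|^2}$ exactly (as $h$ is analytic), while $T_{|h|}^2\equiv T_{|h|^2}$ modulo $\mathcal{K}$ by compactness of the semicommutator of Toeplitz operators with continuous symbols; as both operators are positive and invertible, this forces $|T_h|\equiv T_{|h|}$, and therefore $U_{T_h}\equiv T_hT_{1/|h|}=T_{h/|h|}$ modulo $\mathcal{K}$ (the last product being exact since $1/h\in H^{\infty}$). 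Using $W_\gamma=T_{1/h}C_\gamma$ and the exact identity $T_{h/|h|}T_{1/h}=T_{1/|h|}$ then gives $U_\gamma\equiv T_{1/|h|}C_\gamma$ modulo $\mathcal{K}$. Finally, since $\gamma^{-1}(0)=-b/a$, a short computation on $\IT$ shows that $1/|h(z)|=\sqrt{1-|\gamma^{-1}(0)|^2}/|1-\overline{\gamma^{-1}(0)}z|=w_\gamma(z)$, which is (1).

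For part (2) I would first record the exact intertwining relation $C_\gamma T_w=T_{w\circ\gamma}C_\gamma$, valid for every $w\in L^{\infty}(\IT)$: extending $W_\gamma$ to a unitary on $L^2(\IT)$, it conjugates $M_w$ to $M_{w\circ\gamma}$ and commutes with the orthogonal projection $P$ of $L^2(\IT)$ onto $H^2(\ID)$ (because it preserves $H^2(\ID)$), so $W_\gamma T_w W_\gamma^{-1}=T_{w\circ\gamma}$, whence $C_\gamma T_w=T_h W_\gamma T_w=T_h T_{w\circ\gamma}T_{1/h}C_\gamma=T_{w\circ\gamma}C_\gamma$ by the exact product rules for analytic symbols. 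I would also note that $w_\gamma=|\gamma'|^{1/2}$ on $\IT$, so the chain rule gives $w_{\gamma_1\circ\gamma}=(w_{\gamma_1}\circ\gamma)\,w_\gamma$ on $\IT$. Then, using part (1), the identity $C_\gamma C_{\gamma_1}=C_{\gamma_1\circ\gamma}$, and compactness of the semicommutator (both $w_\gamma$ and $w_{\gamma_1}\circ\gamma$ are continuous),
\begin{align*}
U_\gamma U_{\gamma_1}&\equiv T_{w_\gamma}C_\gamma T_{w_{\gamma_1}}C_{\gamma_1}=T_{w_\gamma}T_{w_{\gamma_1}\circ\gamma}C_{\gamma_1\circ\gamma}\\
&\equiv T_{w_\gamma\,(w_{\gamma_1}\circ\gamma)}C_{\gamma_1\circ\gamma}=T_{w_{\gamma_1\circ\gamma}}C_{\gamma_1\circ\gamma}\equiv U_{\gamma_1\circ\gamma}
\end{align*}
modulo $\mathcal{K}$, the last step being part (1) applied to $\gamma_1\circ\gamma$; this is (2).

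The step I expect to carry the real content is the claim that $W_\gamma$ is unitary on $H^2(\ID)$ — that renormalizing by the analytic branch of $(\gamma')^{1/2}$, which is available precisely because $\gamma$ is an automorphism, turns $C_\gamma$ into an isometry once the boundary change of variables is restricted to $H^2(\ID)$. Granting that, everything else reduces to the standard calculus of Toeplitz operators with continuous symbols (exact products when one symbol is analytic, compactness of $T_uT_v-T_{uv}$, and the resulting stability of square roots modulo $\mathcal{K}$) together with the two elementary identities $1/|h|=w_\gamma$ on $\IT$ and the chain rule for $|\gamma'|^{1/2}$.
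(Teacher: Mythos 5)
The paper does not prove this theorem; it is quoted as a preliminary from Bourdon--MacCluer and Jury, so there is no internal proof to compare against. Your factorization $C_\gamma=T_hW_\gamma$ with $h(z)=\bar b z+\bar a$ and $W_\gamma=T_{(\gamma')^{1/2}}C_\gamma$ unitary is exactly the standard device used in those sources, and the resulting derivation of both parts is essentially correct: the change-of-variables argument for unitarity of $W_\gamma$, the identification $|C_\gamma|=W_\gamma^*|T_h|W_\gamma$, the uniqueness of positive square roots of positive invertible elements in the Calkin algebra, and the computation $1/|h|=w_\gamma$ on $\IT$ all check out, as does the chain-rule identity $w_{\gamma_1\circ\gamma}=(w_{\gamma_1}\circ\gamma)w_\gamma$ and the final chain of congruences for part (2).

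One intermediate claim is overstated and should be repaired, though it does not sink the argument. The intertwining relation $C_\gamma T_w=T_{w\circ\gamma}C_\gamma$ is \emph{not} exact for general $w\in L^\infty(\IT)$: your own computation reduces it to $T_hT_{(w\circ\gamma)/h}C_\gamma$, and $T_hT_u=T_{hu}$ fails for analytic $h$ (the exact product rule requires the \emph{second} symbol to be analytic or the first to be co-analytic). Concretely, with $w=\bar z$ and $f\equiv 1$ one gets $C_\gamma T_{\bar z}1=0$ while $T_{\bar z\circ\gamma}C_\gamma 1=\overline{\gamma(0)}$. The same issue affects the asserted equality $T_hT_{1/|h|}=T_{h/|h|}$ in part (1). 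The fix is immediate: since $h$ is a polynomial of degree one, $PM_h(I-P)$ is rank one, so $T_hT_u-T_{hu}$ is finite rank for every $u\in L^\infty(\IT)$; hence $C_\gamma T_w=T_{w\circ\gamma}C_\gamma$ and $T_hT_{1/|h|}=T_{h/|h|}$ hold modulo $\mathcal{K}$, which is all your proof actually uses, as every step of both parts is a congruence in the Calkin algebra anyway.
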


   \subsection{Crossed Product C*-algebras}

We recall here the definition of a crossed product C$^*$-algebra.  We  restrict our attention to the case of a discrete group $G$.  For the general case, we refer the reader to \cite{Williams:2007}.

If $\mathcal{A}$ is a C$^*$-algebra, let ${\rm Aut}(\mathcal{A})$ be the group of all $*$-automorphisms of $\mathcal{A}$.    If  a map $\alpha: G \rightarrow {\rm Aut}(\mathcal{A})$, acting by $s \mapsto \alpha_s$, is a group homomorphism, then we say that $\alpha$ is an action of $G$ on $\mathcal{A}$, and the triple $(\mathcal{A}, G, \alpha)$ is called a C$^*$-dynamical system.  

To build a crossed product from the C$^*$-dynamical system, we consider $C_c(G, \mathcal{A})$, the collection of all finitely-supported functions from $G$ to $\mathcal{A}$.  Each function $f$ in $C_c(G, \mathcal{A})$ can be written as $f=\sum_{s \in G} A_s \chi_s$, where $A_s \in \mathcal{A}$ for all $s \in G$, $A_s=0$ for all but a finite number of values of $s$, and $\chi_s$ is the characteristic function of $\{s\}$.  Addition on $C_c(G, \mathcal{A})$ is defined pointwise, and the product and involution are given by the formulas \begin{equation*} \left(\sum_{t \in G} A_t \chi_t \right) \left(\sum_{s \in G} B_s \chi_s \right) = \sum_{s \in G} \left(\sum_{r \in G} A_r \alpha_r(B_{r^{-1}s})\right)\chi_s
 \end{equation*}
 and \begin{equation*}\left(\sum_{t \in G}A_t \chi_t\right)^*= \sum_{t \in G} \alpha_t\left(A^*_{t^{-1}}\right)\chi_t.\end{equation*}   Under these operations, $C_c(G, \mathcal{A})$ is a $*$-algebra. 
  
 A covariant representation of $(\mathcal{A}, G, \alpha)$ is a pair $(\pi, W)$ consisting of a representation $\pi: \mathcal{A} \rightarrow \mathcal{B(H)}$ and a unitary representation $W: G \rightarrow \mathcal{B(H)}$, $s \mapsto W_s$,  that satisfy \begin{equation} \pi(\alpha_s(A))=W_s\pi(A)W_s^* \end{equation} for all $s \in G$ and $A \in \mathcal{A}.$  Given a covariant representation $(\pi, W)$, the integrated form $\pi \rtimes W$ of $(\pi, W)$, which is defined on $C_c(G, \mathcal{A})$ by \begin{align*} \left(\pi \rtimes W\right) \left(\sum_{s \in G} A_s \chi_s\right)=\sum_{s \in G} \pi(A_s) W_s,\end{align*} is a representation of $C_c(G, \mathcal{A})$ on $\mathcal{H}$.  The universal norm on $C_{c}(G,\mathcal{A})$ is then defined by \begin{equation*}\label{uninorm} ||f|| := \sup \{||(\pi \rtimes W)(f)|| : (\pi, W) \, \, \, \text{is a covariant representation of} \, \, \, (\mathcal{A}, G, \alpha)\}\end{equation*} for all $f \in C_c(G, \mathcal{A}).$  The completion of $C_c(G, \mathcal{A})$ in the universal norm is the crossed product C$^*$-algebra $\mathcal{A} \rtimes_{\alpha} G$.

In this paper, we will consider C$^*$-algebras of the form $C^*(\mathcal{A}, \{W_s : s \in G\})$, where $\mathcal{A}$ is a unital C$^*$-subalgebra of $\mathcal{B(H)}$ for some Hilbert space $\mathcal{H}$, $G$ is a discrete, amenable group, and  $s \mapsto W_s \in \mathcal{B(H)}$ is a unitary representation of $G$ on the same Hilbert space that satisfies  \begin{equation}\label{actioncond}W_s \mathcal{A}W_s^*=\mathcal{A}\end{equation} for all $s \in G$.  Given such a C$^*$-algebra, we can obtain a C$^*$-dynamical system $(\mathcal{A}, G, \hat{W})$ by defining the action $\hat{W}$ of $G$ on $\mathcal{A}$ by   
 \begin{equation}\label{actiondef}\hat{W}_s(A)=W_sAW_s^*\end{equation} for all $s\in G$ and  $A \in \mathcal{A}$.
   While the existence of a $*$-homomorphism from $\mathcal{A} \rtimes_{\hat{W}} G$ onto $ C^*(\mathcal{A}, \{W_s : s \in G\})$ is guaranteed (see, for example, \cite[Theorem 7.6.6] {Pedersen:1979}), the two C$^*$-algebras need not be isomorphic.    Several authors have determined conditions under which an isomorphism exists \cite{Karlovich:1988, Karlovich:2007, AntonevichLebedev:1994}.  Although the approaches of these authors differ slightly for noncommutative C$^*$-algebras, they coincide in the commutative case.  In the following, we restrict our attention to the setting of a commutative C$^*$-algebra $\mathcal{A}$.
  
  Let $M(\mathcal{A})$ be the collection of all non-trivial multiplicative linear functionals on $\mathcal{A}$.  The group $G$ is said to act topologically freely on $\mathcal{A}$ by the automorphisms $\hat{W}_s$ if, for every finite set $G_0 \subset G$ and every nonempty, open set $V \subset M(\mathcal{A})$, there exists $\pi \in V$ such that $\pi \circ \hat{W}_{s^{-1}} \neq \pi$ for all $s \in G_0 \setminus \{e\}$, where $e$ is the identity element of $G$.  We are using the weak-$*$ topology on $M(\mathcal{A}),$ but it is possible to use a weaker topology in this definition. 
    
  We can now state the following theorem that is the commutative version of Theorem 1 in \cite{Karlovich:1988} and Corollary 12.16 in \cite{AntonevichLebedev:1994}.
  
  \begin{theorem}\label{isothmcross} Let $\mathcal{A}$ be a  commutative, unital C$\, ^*$-algebra in $\mathcal{B(H)}$.  Let $G$ be a discrete, amenable group with 
  a unitary representation $s \mapsto W_s \in \mathcal{B(H)}$ that satisfies (\ref{actioncond}). Define the action $\hat{W}$ of $G$ on $\mathcal{A}$ by (\ref{actiondef}).  
  
  If $G$ acts topologically freely on $\mathcal{A}$ via the automorphisms $\hat{W}_s$, then $C^*(\mathcal{A}, \{W_s : s \in G\})$ is
   isometrically $*$-isomorphic to $\mathcal{A} \rtimes_{\hat{W}} G$.  The isomorphism $\Lambda : \mathcal{A} \rtimes_{\hat{W}} G \rightarrow C^*(\mathcal{A}, \{W_s : s \in G\})$ is
    defined on $C_c(G, \mathcal{A})$ by \begin{equation}\label{isoform}\Lambda\left(\sum_{s \in G} A_s\chi_s\right) = \sum_{s\in G} A_s W_s.\end{equation} \end{theorem}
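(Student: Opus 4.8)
The plan is to recognize that this "theorem" is, as the excerpt itself states, the commutative specialization of known results of Karlovich and of Antonevich--Lebedev, so the proof will not reprove the general isomorphism theorem from scratch but rather deduce the stated form from those references. First I would set up the bookkeeping: by \cite[Theorem 7.6.6]{Pedersen:1979} there is always a surjective $*$-homomorphism $\Lambda : \mathcal{A} \rtimes_{\hat{W}} G \rightarrow C^*(\mathcal{A}, \{W_s : s \in G\})$ determined on $C_c(G,\mathcal{A})$ by the formula (\ref{isoform}), so the entire content is that $\Lambda$ is \emph{injective}, and moreover \emph{isometric}. I would note that surjectivity of $\Lambda$ onto $C^*(\mathcal{A},\{W_s\})$ follows because the range contains $\mathcal{A}$ (the image of $A_e\chi_e$) and every $W_s$ (the image of $1\cdot\chi_s$, using that $\mathcal{A}$ is unital), and these generate the target C$^*$-algebra; the universal property of the full crossed product, applied to the covariant representation $(\iota, W)$ where $\iota:\mathcal{A}\hookrightarrow\mathcal{B(H)}$ is the inclusion and $s\mapsto W_s$ is the given unitary representation, is exactly what produces $\Lambda$.

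The substantive step is injectivity, and here I would invoke the cited theorems directly: since $\mathcal{A}$ is commutative and unital and $G$ is discrete and amenable with an action that is topologically free on $\mathcal{A}=C(M(\mathcal{A}))$ via the $\hat{W}_s$, Theorem~1 of \cite{Karlovich:1988} (equivalently Corollary~12.16 of \cite{AntonevichLebedev:1994}) asserts that the regular representation of the crossed product is faithful on the algebra generated by $\mathcal{A}$ and the covariance unitaries, i.e.\ that any C$^*$-completion of $C_c(G,\mathcal{A})$ sitting between the full and reduced ones is forced to be the full crossed product; combined with amenability of $G$ (which collapses the distinction between the full and reduced crossed products, so that $\mathcal{A}\rtimes_{\hat{W}}G=\mathcal{A}\rtimes_{\hat{W},r}G$), this pins down $C^*(\mathcal{A},\{W_s:s\in G\})$ as that unique completion. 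Concretely, I would phrase it as: the identity representation of $C^*(\mathcal{A},\{W_s:s\in G\})$ on $\mathcal{H}$ is the integrated form of the covariant pair $(\iota,W)$; topological freeness guarantees this integrated form is faithful on the reduced crossed product (this is the heart of the Karlovich / Antonevich--Lebedev result in the commutative case); amenability identifies the reduced and full crossed products; hence $\Lambda$ is injective, and a $*$-isomorphism between C$^*$-algebras is automatically isometric.

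I expect the main obstacle to be purely expository rather than mathematical: stating precisely which hypotheses of the cited theorems are being used and confirming that the topological-freeness condition as defined in the excerpt (for every finite $G_0\subset G$ and nonempty open $V\subset M(\mathcal{A})$ there is $\pi\in V$ with $\pi\circ\hat{W}_{s^{-1}}\neq\pi$ for all $s\in G_0\setminus\{e\}$) matches the hypothesis under which \cite{Karlovich:1988} and \cite{AntonevichLebedev:1994} conclude faithfulness of the natural representation. One should also double-check the identification of the automorphism $\hat{W}_s$ induced on $M(\mathcal{A})$, namely that $\pi\mapsto\pi\circ\hat{W}_{s^{-1}}$ is the homeomorphism of $M(\mathcal{A})$ dual to $\hat{W}_s$, so that "$G$ acts topologically freely on $\mathcal{A}$" in the algebraic sense coincides with the topological-dynamics notion of a topologically free action on the spectrum used in those sources. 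Once these identifications are in place, the proof is a short citation-driven argument, and the explicit formula (\ref{isoform}) for $\Lambda$ requires no further verification since it is the defining formula of the integrated form $\iota\rtimes W$ restricted to $C_c(G,\mathcal{A})$.
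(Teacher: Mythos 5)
The paper offers no proof of this theorem at all: it is stated verbatim as the commutative specialization of Theorem 1 of Karlovich and Corollary 12.16 of Antonevich--Lebedev, so the ``paper's proof'' is precisely the citation you give. Your reconstruction (surjectivity of $\Lambda$ from the universal property of the integrated form of $(\iota, W)$, injectivity from topological freeness plus amenability collapsing the full and reduced crossed products) is a correct and standard unpacking of those references, so it matches the paper's approach.
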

    
  If $\mathcal{A}$ and $G$ satisfy the conditions above and $\mathcal{A}$ is a separable C$^*$-algebra, then the invertibility of operators in $C^*(\mathcal{A}, \{W_g : g \in G\})$ is often investigated using a method called the trajectorial approach.  For each multiplicative linear functional $\pi \in M(\mathcal{A})$, define a representation $\tau_{\pi}$ of $C^*(\mathcal{A}, \{W_s : s \in G\})$ on $\ell^2(G)$ by  \begin{equation}
  \label{howtodefinerep} (\tau_{\pi}(A)h)(s)=\pi(\hat{W}_s(A))h(s)   \qquad ( \tau_{\pi}(W_{s_0})h)(s)=h(s s_0). \end{equation}     The invertibility of $B \in C^*(\mathcal{A}, \{W_s : s \in G\})$ is then determined by the invertibility of its images under these representations in the following way:
  
  \begin{theorem} {\rm \cite[Theorem 21.2]{AntonevichLebedev:1994}} \label{trajectorial} Let $\mathcal{A}$, $G$, and $\hat{W}$ be defined as in Theorem \ref{isothmcross} and suppose that $\mathcal{A}$ is separable and $G$ acts topologically freely on $\mathcal{A}$ via the automorphisms $\hat{W_s}$.  Then an element $B \in C^*(\mathcal{A}, \{W_s : s \in G\})$ is invertible if and only if $\tau_{\pi}(B)$ is an invertible operator on $\ell^2(G)$ for all $\pi \in M(\mathcal{A})$.  \end{theorem}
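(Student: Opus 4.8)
The plan is to treat this as the commutative, amenable specialization of the Antonevich--Lebedev machinery and to reduce it to (i) a faithfulness statement for a large direct sum of the $\tau_{\pi}$ and (ii) a localization argument that eliminates a uniform-norm condition. First I would invoke Theorem \ref{isothmcross}: since $G$ is discrete and amenable and acts on the commutative unital C$^*$-algebra $\mathcal{A}$, we may replace $C^*(\mathcal{A},\{W_s:s\in G\})$ by the crossed product $\mathcal{A}\rtimes_{\hat W}G$, identify $\mathcal{A}$ with $C(X)$ for the compact Hausdorff space $X=M(\mathcal{A})$, and regard $\hat W$ as an action of $G$ on $X$ by homeomorphisms. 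Under this identification, formula (\ref{howtodefinerep}) exhibits $\tau_{\pi}$ as (unitarily equivalent to) the orbit representation of $\mathcal{A}\rtimes_{\hat W}G$ on $\ell^2(G)$ attached to the character $\pi\in X$, i.e.\ the integrated form of the covariant pair consisting of evaluation along the orbit of $\pi$ and a translation representation of $G$. The forward implication is then immediate: each $\tau_{\pi}$ is a unital $\ast$-homomorphism, so it sends invertible elements to invertible operators.

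For the converse I would first show that $\Phi:=\bigoplus_{\pi\in X}\tau_{\pi}$ is a faithful, hence isometric, representation of $\mathcal{A}\rtimes_{\hat W}G$. The representation $\rho:=\bigoplus_{\pi\in X}\pi$ of $C(X)$ is isometric, since $\|f\|_{\infty}=\sup_{\pi\in X}|f(\pi)|$; the regular representation of the \emph{reduced} crossed product induced from $\rho$ acts on $\bigoplus_{\pi\in X}\ell^2(G)$ and, after matching translation conventions, decomposes precisely as $\bigoplus_{\pi\in X}\tau_{\pi}$. Induced regular representations from faithful representations of the coefficient algebra are faithful on the reduced crossed product, and amenability of $G$ gives $\mathcal{A}\rtimes_{\hat W}G=\mathcal{A}\rtimes_{\hat W,r}G$, so $\Phi$ is faithful. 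Consequently $B$ is invertible in $\mathcal{A}\rtimes_{\hat W}G$ if and only if $\Phi(B)$ is invertible in $\prod_{\pi\in X}\mathcal{B}(\ell^2(G))$, i.e.\ if and only if every $\tau_{\pi}(B)$ is invertible \emph{and} $\sup_{\pi\in X}\|\tau_{\pi}(B)^{-1}\|<\infty$.

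The main obstacle is removing that uniform-boundedness clause: one must prove that pointwise invertibility of all $\tau_{\pi}(B)$ already forces $\sup_{\pi\in X}\|\tau_{\pi}(B)^{-1}\|<\infty$. A naive compactness argument on $X$ fails, because $\pi\mapsto\tau_{\pi}(A)$ is only strong-operator continuous (the diagonal operator $\tau_{\pi}(A)$ has infinitely many entries, so the field is not norm continuous), and this is exactly where topological freeness of the action is indispensable. I would argue by contradiction after replacing $B$ by $B^*B\ge 0$: if $\inf_{\pi\in X}$ of the bottom of $\mathrm{spec}\big(\tau_{\pi}(B^*B)\big)$ were $0$, one could extract $\pi_n\to\pi_0$ in the compact space $X$ and unit vectors $h_n\in\ell^2(G)$ with $\|\tau_{\pi_n}(B)h_n\|\to 0$; using that a finitely supported coefficient string in $B$ only involves finitely many translates, that the support structure on $\ell^2(G)$ is nearly translation invariant along an orbit, and that topological freeness makes points with $G_0$-trivial stabilizer dense (so that near $\pi_0$ the representation $\tau_{\pi}$ is ``expanded enough'' to transport local data), one glues the $h_n$ into an approximate null sequence for $\tau_{\pi_0}(B^*B)$, contradicting invertibility of $\tau_{\pi_0}(B)$. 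This is the technical heart of the argument and is precisely the content of the local principle underlying \cite[Theorem 21.2]{AntonevichLebedev:1994}. Combined with the faithfulness of $\Phi$ established above, it yields the asserted equivalence between invertibility of $B$ and invertibility of all the operators $\tau_{\pi}(B)$.
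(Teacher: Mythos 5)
A preliminary remark: the paper offers no proof of Theorem \ref{trajectorial}. It is quoted verbatim, with attribution, from \cite[Theorem 21.2]{AntonevichLebedev:1994} and used as a black box in Section \ref{applicationsection}, so there is no internal argument to compare yours against; your proposal has to stand on its own. Its architecture is the standard and correct one: pass to $\mathcal{A}\rtimes_{\hat W}G$ via Theorem \ref{isothmcross}; note that each $\tau_{\pi}$ is a unital $*$-homomorphism (giving the forward implication); observe that $\Phi=\bigoplus_{\pi}\tau_{\pi}$ is the regular representation induced from the atomic (hence faithful) representation of $C(M(\mathcal{A}))$, so it is faithful on the reduced crossed product, and amenability identifies full with reduced; and finally isolate the genuine difficulty, namely that invertibility of $\Phi(B)$ in $\prod_{\pi}\mathcal{B}(\ell^2(G))$ requires $\sup_{\pi}\|\tau_{\pi}(B)^{-1}\|<\infty$, which does not follow from pointwise invertibility by naive compactness because $\pi\mapsto\min\sigma(\tau_{\pi}(B^*B))$ is only upper semicontinuous.

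The gap is that you never close that last step. Your final paragraph ends by asserting that the gluing of the approximate null vectors ``is precisely the content of the local principle underlying \cite[Theorem 21.2]{AntonevichLebedev:1994}'' --- that is, you defer the decisive step to the very theorem being proved, which is circular. The sketch preceding the deferral is also too thin to repair on the spot. Extracting $\pi_n\to\pi_0$ and unit vectors $h_n$ with $\|\tau_{\pi_n}(B)h_n\|\to 0$ is easy, and \emph{if} the $h_n$ could be taken with supports in a fixed finite subset of $G$, then weak-$*$ continuity of $\pi\mapsto\pi(\hat W_s(A))$ would transport them into an approximate null sequence for $\tau_{\pi_0}(B)$. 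But nothing forces the supports to stay in a bounded window; recentering via the unitary equivalence between $\tau_{\pi}$ and $\tau_{\pi\circ\hat W_{s}}$ moves the base point, so the contradiction lands at some other accumulation point of the translated characters and the bookkeeping has to be done carefully. Moreover, you never say where topological freeness is actually consumed in this estimate --- your parenthetical about density of points with trivial $G_0$-stabilizer is not connected to any inequality. In short: the skeleton and the identification of the hard point are right, but the hard point is asserted rather than proved.
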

  
  \section{Linear-fractionally-induced Composition Operators in $C^*(C_{\varphi}, \mathcal{K})$}\label{whatsinsection}
     
   To begin our investigation of the unital C$^*$-algebras generated by the compact operators and  composition operators induced by linear-fractional, non-automorphism self-maps $\varphi$ of $\ID$ that fix a point $\zeta \in \IT$, we determine which linear-fractionally-induced composition operators are contained in $C^*(C_{\varphi}, \mathcal{K}).$    Our arguments utilize two known lower bounds on the essential norm of a linear combination of composition operators on $H^2(\ID)$.
    
    If $\varphi$ is an analytic self-map of $\ID$, we set $J(\varphi):=\{\alpha \in \IT : |\varphi(\alpha)|=1\}$ and define $F(\varphi)$ to be the set of all points $\alpha \in \IT$ at which $\varphi$ has a finite angular derivative.  If $\varphi$ is a linear-fractional map, then $F(\varphi)=J(\varphi),$ and the finite angular derivative of $\varphi$ at $\alpha \in F(\varphi)$ is simply $\varphi^{\prime}(\alpha)$.  
  Using this notation, we recall the following bounds.  The first bound is a version of a result of Berkson \cite{Berkson:1981} and Shapiro and Sundberg \cite{ShapiroSundberg:1990} that appears in \cite{CowenMacCluer:1995} as Exercise 9.3.2. The second bound is Theorem 5.2 in \cite{KrieteMoorhouse:2007}. 
\begin{theorem} Let $\varphi_1, \ldots, \varphi_n$ be distinct analytic self-maps of $\ID$ and $c_1$, $\ldots$, $c_n \in \IC$.  Then
\begin{equation}\label{Jlowerbound} \left|\left|c_1C_{\varphi_1} + \ldots + c_n C_{\varphi_n}\right| \right|_e^2 \geq \frac{1}{2\pi} \sum_{j=1}^n|c_j|^2|J(\varphi_j)|, \end{equation}
  where $|J(\varphi_j)|$ denotes the Lebesgue measure of $J(\varphi_j)$.  In addition,  
  \begin{equation} \label{datalowerbound} \left|\left|c_{1}C_{\varphi_1} + \ldots + c_nC_{\varphi_n}\right|\right|_e^2 
\geq \sum_{ (d_0, d_1) \in \mathcal{D}_1(\zeta)} \left|\sum_{\substack{\zeta \in F(\varphi_j) \\ (\varphi_j(\zeta), \varphi_j^{\prime}(\zeta))= (d_0, d_1)}} c_j\right|^2\frac{1}{|d_1|},\end{equation} for all $\zeta \in \IT$, where $\mathcal{D}_{1}(\zeta):=\{(\varphi_j(\zeta), \varphi_j^{\prime}(\zeta)) : 1 \leq j \leq n, \zeta \in F(\varphi_j)\}$.
   \end{theorem}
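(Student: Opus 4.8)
The plan is to derive both inequalities from the elementary fact that, for a bounded operator $S$ on $H^2(\ID)$ and a sequence $(h_m)$ of unit vectors converging weakly to $0$, one has $\|S\|_e \ge \limsup_m\|Sh_m\|$: for every compact $K$, $\|S-K\|\ge\limsup_m\|(S-K)h_m\|=\limsup_m\|Sh_m\|$ because $\|Kh_m\|\to 0$. For (\ref{Jlowerbound}) I would apply this with $S=T:=c_1C_{\varphi_1}+\cdots+c_nC_{\varphi_n}$ and $h_m=z^m$; for (\ref{datalowerbound}), with $S=T^*$ and $h_m$ a normalized reproducing kernel, or a $\bar w$-derivative of one, concentrated at $\zeta$.

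For (\ref{Jlowerbound}): here $C_{\varphi_j}z^N=\varphi_j^N$ in boundary-value form, so $\|Tz^N\|^2=\sum_{i,j}c_i\overline{c_j}\int_{\IT}(\varphi_i\overline{\varphi_j})^N\,\tfrac{d\theta}{2\pi}$, where $|\varphi_i\overline{\varphi_j}|\le 1$ with equality exactly on $J(\varphi_i)\cap J(\varphi_j)$. I would then Ces\`aro-average over $N$: the pointwise limit $\lim_{L\to\infty}\tfrac1L\sum_{N=0}^{L-1}(\varphi_i\overline{\varphi_j})^N$ is $1$ where $\varphi_i=\varphi_j\in\IT$ and $0$ elsewhere, and since two distinct bounded analytic functions agree only on a null subset of $\IT$ \cite{Duren:1970}, dominated convergence gives $\lim_{L\to\infty}\tfrac1L\sum_{N=0}^{L-1}\|Tz^N\|^2=\tfrac1{2\pi}\sum_j|c_j|^2|J(\varphi_j)|$. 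A Ces\`aro mean never exceeds the $\limsup$ of the sequence, and $\limsup_N\|Tz^N\|^2\le\|T\|_e^2$ since $z^N\to 0$ weakly; this yields (\ref{Jlowerbound}). (This is, in essence, the argument of Berkson \cite{Berkson:1981} and Shapiro--Sundberg \cite{ShapiroSundberg:1990}; averaging over $N$ is what lets $\varphi_i$ and $\varphi_j$ differ by a unimodular constant.)

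For (\ref{datalowerbound}): fix $\zeta$, set $w_r=r\zeta$, and for $k\ge 0$ write $L^{(k)}_w=\partial_{\bar w}^kK_w$ and $\widehat L^{(k)}_w=L^{(k)}_w/\|L^{(k)}_w\|$; since $H^2$-functions and their derivatives grow more slowly than $\|L^{(k)}_w\|$ as $|w|\to1$, each $\widehat L^{(k)}_{w_r}\to 0$ weakly. Differentiating $C_\varphi^*K_w=K_{\varphi(w)}$ in $\bar w$ gives $C_\varphi^*L^{(k)}_w=(\overline{\varphi'(w)})^kL^{(k)}_{\varphi(w)}+(\text{terms in }\varphi'',\varphi''',\dots)$; for linear-fractional $\varphi$ the higher derivatives stay bounded near $\zeta$, so after normalization $C_{\varphi_j}^*\widehat L^{(k)}_{w_r}=\beta^{(k)}_j(r)\,\widehat L^{(k)}_{\varphi_j(w_r)}+o(1)$. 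If $\zeta\in F(\varphi_j)$, the Julia--Carath\'eodory theorem \cite{CowenMacCluer:1995} gives $\varphi_j(w_r)\to\varphi_j(\zeta)$ nontangentially, $\|L^{(k)}_{\varphi_j(w_r)}\|/\|L^{(k)}_{w_r}\|\to s_j^{-(k+1/2)}$, and $\overline{\varphi_j(\zeta)}\varphi_j'(\zeta)\zeta=s_j$, where $s_j:=|\varphi_j'(\zeta)|$, so that $|\beta^{(k)}_j(r)|\to s_j^{-1/2}$; if $\zeta\notin F(\varphi_j)$ the term is $o(1)$. I would then expand $\|T^*\widehat L^{(k)}_{w_r}\|^2$ as a double sum of the $\overline{c_i}c_j\,\beta^{(k)}_i(r)\overline{\beta^{(k)}_j(r)}\langle\widehat L^{(k)}_{\varphi_i(w_r)},\widehat L^{(k)}_{\varphi_j(w_r)}\rangle$ and transport the target kernels to the right half-plane via a Cayley map sending $\varphi_i(\zeta)$ to $0$ (a Paley--Wiener picture in which $\varphi_i$, $\varphi_j$ meet their common boundary value along one ray at speeds proportional to $s_i$, $s_j$): the overlap $\to 0$ when $\varphi_i(\zeta)\ne\varphi_j(\zeta)$, $\to 1$ when $(\varphi_i(\zeta),\varphi_i'(\zeta))=(\varphi_j(\zeta),\varphi_j'(\zeta))$, and otherwise has modulus $\le\bigl(2\sqrt{s_is_j}/(s_i+s_j)\bigr)^{2k+1}$, while $\beta^{(k)}_i(r)\overline{\beta^{(k)}_j(r)}\to(s_is_j)^{-1/2}$ whenever $\varphi_i(\zeta)=\varphi_j(\zeta)$ (the $k$-dependent phases cancelling). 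Thus $\|T^*\widehat L^{(k)}_{w_r}\|^2$ equals the sum over data-matching pairs $(i,j)$ — which converges as $r\to 1$ to the right-hand side of (\ref{datalowerbound}) — plus $o(1)$ plus an error of modulus at most $C\max_{i\ne j}\bigl(2\sqrt{s_is_j}/(s_i+s_j)\bigr)^{2k+1}$. Since $\|T\|_e^2\ge\limsup_r\|T^*\widehat L^{(k)}_{w_r}\|^2$ for every $k$ and $2\sqrt{s_is_j}/(s_i+s_j)<1$ when $s_i\ne s_j$, letting $k\to\infty$ gives (\ref{datalowerbound}). (This is Theorem 5.2 of \cite{KrieteMoorhouse:2007}.)

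I expect the second inequality to be the main obstacle. The ordinary kernels ($k=0$) recover only the form $\sum_{i,j}\overline{c_i}c_j\,\tfrac{2}{s_i+s_j}$ inside each class of maps sharing a boundary value at $\zeta$, which is strictly weaker than (\ref{datalowerbound}) as soon as two of the $\varphi_j$ share that value but not an angular derivative there (for instance $\varphi_1(z)=\tfrac{1+z}{2}$, $\varphi_2(z)=\tfrac{2z}{3-z}$ with $c=(1,-1)$). Only the geometric decay in $k$ of the normalized higher-derivative-kernel overlaps eventually separates such maps, and the real work is to justify that decay: the half-plane overlap computation, the control of the $\varphi''$-contributions after normalization, the Julia--Carath\'eodory normalizations, and — for general, not necessarily linear-fractional $\varphi_j$ — the existence (or, where it fails, a $\liminf$ substitute with uniform bounds) of the relevant nontangential limits.
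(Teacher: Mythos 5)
The paper gives no proof of this theorem: it is stated as a recollection of known results, with (\ref{Jlowerbound}) attributed to Berkson and Shapiro--Sundberg (via Exercise 9.3.2 of \cite{CowenMacCluer:1995}) and (\ref{datalowerbound}) cited as Theorem 5.2 of \cite{KrieteMoorhouse:2007}. Your proposal correctly reconstructs exactly those arguments --- the weakly null sequence $z^N$ with Ces\`aro averaging for the first bound, and the normalized higher-order derivative kernels with geometrically decaying overlaps $\bigl(2\sqrt{s_is_j}/(s_i+s_j)\bigr)^{2k+1}$ to separate maps sharing a boundary value but not an angular derivative for the second --- and you rightly identify the remaining technical work (control of the lower-order $\varphi^{(m)}$-terms after normalization and the Julia--Carath\'eodory limits for general, not just linear-fractional, symbols) as the substance of the cited proof rather than a flaw in the plan.
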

   
   We now apply these bounds to determine the form of all composition operators contained in $C^*(C_{\varphi}, \mathcal{K})$.  The following lemma, and its proof, are modeled after Theorem 2 in \cite{KrieteMacCluerMoorhousePP}.
   
\begin{lemma}\label{whatthederivmustbe} Let $\varphi$ be a linear-fractional, non-automorphism self-map of $\ID$ with $\varphi(\zeta)=\zeta$ 
for some $\zeta \in \IT.$ Suppose $\psi: \ID \rightarrow \ID$ is analytic, $C_{\psi} \in C^{*}(C_{\varphi}, \mathcal{K}),$ and $F(\psi)$ is non-empty.  Then either 
$\psi(z)=z$ for all $z \in \ID$ or $F(\psi)=\{\zeta\}$, $\psi(\zeta)=\zeta$, and $\psi^{\prime}(\zeta)=\left(\varphi^{\prime}(\zeta)\right)^n$ for some 
$n \in \IZ.$ \end{lemma}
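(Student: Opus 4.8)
The plan is to use the C$^*$-algebra structure of $C^*(C_\varphi,\mathcal{K})$ together with the two essential-norm lower bounds, exactly in the spirit of Theorem 2 of \cite{KrieteMacCluerMoorhousePP}. First I would reduce to a statement about cosets in the Calkin algebra: since $C_\psi\in C^*(C_\varphi,\mathcal{K})$, the coset $[C_\psi]$ lies in the C$^*$-algebra generated by $[C_\varphi]$ (and the unit), which is commutative because $[C_\varphi]$ and $[C_\varphi^*]$ commute modulo $\mathcal{K}$; indeed, by \eqref{adjform}, $C_\varphi^*=|\varphi'(\zeta)|^{-1}C_\sigma+K$, and Theorem \ref{parabolicandourmaps}(1) gives $\sigma\circ\varphi=\rho_{\zeta,2c}$ and $\varphi\circ\sigma=\rho_{\eta,2b}=\rho_{\zeta,2b}$ (since $\eta=\zeta$ here), so $[C_\varphi^*][C_\varphi]=[C_{\rho_{\zeta,2c}}]$ and $[C_\varphi][C_\varphi^*]=[C_{\rho_{\zeta,2b}}]$ lie in the commutative algebra $C^*(\IP_\zeta)/\mathcal{K}\cong C([0,1])$ (Theorem \ref{paraboliciso}), and these commute with $[C_\varphi]$ by Theorem \ref{parabolicandourmaps}(2). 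So $C^*([C_\varphi])$ is commutative and $[C_\psi]$ is a norm-limit of polynomials $p_k([C_\varphi],[C_\varphi^*])$.

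Next I would extract the structural constraint. Writing $[C_\varphi]^m[C_\varphi^*]^\ell$ and using \eqref{adjform} and the commutation relations repeatedly, every word in $[C_\varphi]$ and $[C_\varphi^*]$ reduces, modulo $\mathcal{K}$, to a scalar multiple of $[C_{\varphi^{\langle n\rangle}}]$-type terms, where iterating $\varphi$ and $\sigma$ produces linear-fractional non-automorphism self-maps fixing $\zeta$ with derivative at $\zeta$ equal to a power $(\varphi'(\zeta))^n$ for $n\in\IZ$ (note $\varphi'(\zeta)>0$ and $\varphi'(\zeta)\neq1$, so distinct powers give distinct derivatives), plus parabolic terms arising from $\varphi\circ\sigma$ and $\sigma\circ\varphi$, whose induced composition operators lie in $C^*(\IP_\zeta)$ and hence have coset inside $C([0,1])$. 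Thus $[C_\psi]$ lies in the closed linear span of $\{[C_{\varphi^{\langle n\rangle}}] : n\in\IZ\}$ together with $C^*(\IP_\zeta)/\mathcal{K}$. Now apply the lower bounds. If $\psi$ is not the identity and $F(\psi)\neq\varnothing$, pick $\alpha\in F(\psi)$ and consider the difference $[C_\psi]-(\text{its projection onto the span of the other generators})$. Using \eqref{datalowerbound} with the point $\alpha$ — and, crucially, the fact that $C_{\psi}$ cannot be approximated in essential norm by operators none of whose component maps share the data $(\psi(\alpha),\psi'(\alpha))$ at $\alpha$ — I would conclude that $\psi$ must agree in boundary data at $\alpha$ with one of the maps $\varphi^{\langle n\rangle}$, which forces $\alpha=\zeta$, $\psi(\zeta)=\zeta$, and $\psi'(\zeta)=(\varphi'(\zeta))^n$. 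The bound \eqref{Jlowerbound} handles the complementary concern: if $\psi$ contributed a composition operator with $|J(\psi)|>0$ that were genuinely ``new,'' its essential norm would be bounded below away from the span, a contradiction; and since $\psi$ is not assumed linear-fractional, one also checks $J(\psi)$ cannot be a positive-measure set disjoint from the relevant angular-derivative points.

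The main obstacle I expect is the bookkeeping in the second step: showing that \emph{every} element of the closed algebra generated by $[C_\varphi]$ decomposes cleanly into the ``$\varphi^{\langle n\rangle}$ part'' plus a parabolic part with no cross terms surviving modulo $\mathcal{K}$, and then isolating a single Fourier-type coefficient to which \eqref{datalowerbound} can be applied at the point $\alpha$. Concretely, one wants to produce, for a hypothetical ``bad'' $\psi$, an operator of the form $[C_\psi]-\sum_{|n|\le N} c_n[C_{\varphi^{\langle n\rangle}}] - (\text{parabolic coset})$ whose essential norm is both small (because $[C_\psi]$ is in the closure) and bounded below by a fixed positive constant via \eqref{datalowerbound} applied at $\alpha$ (because $\psi$ alone carries the data $(\psi(\alpha),\psi'(\alpha))$ there, unless that data coincides with $(\zeta,(\varphi'(\zeta))^n)$ for some $n$). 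Making the "$F(\psi)=\{\zeta\}$" conclusion — rather than merely $\zeta\in F(\psi)$ — is the remaining delicate point: a second point $\alpha'\in F(\psi)\setminus\{\zeta\}$ would give, via \eqref{datalowerbound} or \eqref{Jlowerbound} at $\alpha'$, an irreducible contribution absent from the generating set, which is the contradiction that pins down $F(\psi)=\{\zeta\}$.
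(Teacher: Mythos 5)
Your proposal follows essentially the same route as the paper's proof: introduce the family $\mathcal{L}_{\varphi}$ of maps obtained by composing copies of $\varphi$ and its Krein adjoint $\sigma$, observe via (\ref{adjform}) that every word in $C_{\varphi}$ and $C_{\varphi}^*$ is, modulo $\mathcal{K}$, a scalar multiple of $C_{\ell}$ for some $\ell\in\mathcal{L}_{\varphi}$, approximate $C_{\psi}$ in essential norm by a finite linear combination of such operators plus a multiple of the identity, and then play (\ref{Jlowerbound}) (to force the identity coefficient to be small and $|J(\psi)|=0$) and (\ref{datalowerbound}) (first at a hypothetical $\lambda\in F(\psi)\setminus\{\zeta\}$, then at $\zeta$) against the quality of the approximation. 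The delicate points you flag at the end are resolved in the paper exactly as you anticipate.

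One assertion in your first paragraph is false, though fortunately never used: $C^*([C_{\varphi}])$ is \emph{not} commutative when $\varphi^{\prime}(\zeta)\neq 1$. By (\ref{adjform}) and Theorem \ref{parabolicandourmaps}, $[C_{\varphi}^*][C_{\varphi}]=\varphi^{\prime}(\zeta)^{-1}[C_{\varphi\circ\sigma}]=\varphi^{\prime}(\zeta)^{-1}[C_{\rho_{\zeta,2b}}]$ while $[C_{\varphi}][C_{\varphi}^*]=\varphi^{\prime}(\zeta)^{-1}[C_{\sigma\circ\varphi}]=\varphi^{\prime}(\zeta)^{-1}[C_{\rho_{\zeta,2c}}]$, and a computation in the half-plane model shows that $b$ and $c$ differ by the factor $\varphi^{\prime}(\zeta)$; hence by Theorem \ref{paraboliciso} these two cosets are the distinct elements $\Gamma_{\zeta}(x^{2b})$ and $\Gamma_{\zeta}(x^{2c})$ of $C^*(\IP_{\zeta})/\mathcal{K}\cong C([0,1])$. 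The argument only needs that $[C_{\psi}]$ is a norm limit of noncommutative polynomials in $[C_{\varphi}]$ and $[C_{\varphi}^*]$, which holds regardless, so this is a misstatement rather than a gap. Two smaller cautions: (\ref{datalowerbound}) applies to finite linear combinations of distinct composition operators, so the ``parabolic part'' must be kept as an honest finite linear combination of parabolic composition operators (as the paper does by working directly with $\mathcal{L}_{\varphi}$) rather than an arbitrary element of $C([0,1])$; and the identity coefficient must be shown small \emph{before} applying (\ref{datalowerbound}) at $\lambda\neq\zeta$, since the identity map also has a finite angular derivative there and could otherwise cancel against $\psi$ in the relevant sum.
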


\begin{proof} Let $\mathcal{L}_{\varphi}$ be the collection of all maps $\ell$ that can be formed by composing copies of $\varphi$ and $\sigma_{\varphi}$.   All maps $\ell \in \mathcal{L}_{\varphi}$ are non-automorphism self-maps of $\ID$ that satisfy $\ell(\zeta)=\zeta$, 
 $J(\ell)=F(\ell)=\{\zeta\},$ and $\ell^{\prime}(\zeta)=\varphi^{\prime}(\zeta)^n$ for some $n \in \IZ.$  
  By (\ref{adjform}), $C_{\ell} \in C^*(C_{\varphi}, \mathcal{K})$ for all $\ell \in \mathcal{L}_{\varphi}$, and every word in $C_{\varphi}$ and $C_{\varphi}^*$ can be written as the sum of a compact operator and a constant multiple of a composition operator of this form.

Suppose  $\psi$ is not the identity map and $\psi \notin \mathcal{L}_{\varphi}$.  Let $\varepsilon >0$ be given.  Since $C_{\psi} \in C^*(C_{\varphi}, \mathcal{K})$, we can find
a constant $c_{\varepsilon} \in \IC$ and a linear combination $A_{\varepsilon}$ of composition 
operators induced by maps from $\mathcal{L}_{\varphi}$ such that $$\left|\left|C_{\psi}-A_{\varepsilon} - 
c_{\varepsilon}C_z\right|\right|_e  < \varepsilon.$$  Here, $C_z$ denotes the composition operator induced by the identity map on $\IC$,  which is the identity operator on $H^2(\ID)$.  Then, by (\ref{Jlowerbound}), $$\varepsilon^2 >
\left|\left|C_{\psi}-A_{\varepsilon} -c_{\varepsilon}C_z\right|\right|_e^2 \geq \frac{|J(\psi)|}{2\pi} + |c_{\varepsilon}|^2.$$ 
Thus, $|c_{\varepsilon}| < \varepsilon$, and $|J(\psi)|=0$   since $\varepsilon$ was arbitrary.  Hence
 \begin{equation}\label{bounding} \left|\left|C_{\psi}-A_{\varepsilon}\right|\right|_e \leq
  \left|\left|C_{\psi}- A_{\varepsilon} - c_{\varepsilon}C_z\right|\right|_e + \left|\left|c_{\varepsilon}C_z\right|\right|_e < 2 \varepsilon. \end{equation}

Suppose $\lambda \in F(\psi)$ with $\lambda \neq \zeta.$  Then, for all $\varepsilon >0$,
$$\left|\left|C_{\psi} - A_{\varepsilon}\right|\right|^2_e \geq |\psi^{\prime}(\lambda)|^{-1}$$
 by (\ref{datalowerbound}), which contradicts (\ref{bounding}) for sufficiently small $\varepsilon.$  Since $F(\psi)$ is assumed to be non-empty, 
 $F(\psi)=\{\zeta\}.$
 
 Similarly, if $\psi^{\prime}(\zeta) \neq \varphi^{\prime}(\zeta)^n$ for all $n \in \IZ$ or $\psi
 (\zeta) \neq \varphi(\zeta)$, then, for all $\varepsilon >0$,  \begin{equation}\label{tocontradict}\left|\left| C_{\psi}- A_{\varepsilon} \right| \right|_e^2  \geq |\psi^{\prime}(\zeta)|^{-1}\end{equation} 
 by (\ref{datalowerbound}) since $(\psi(\zeta), \psi^{\prime}(\zeta)) \neq (\ell(\zeta), \ell^{\prime}(\zeta))$ for all $\ell \in \mathcal{L}_{\varphi}.$
   For sufficiently small $\varepsilon,$ (\ref{tocontradict}) contradicts (\ref{bounding}), which proves that $\psi(\zeta)=\varphi(\zeta)$ and $\psi^{\prime}(\zeta)=\varphi^{\prime}(\zeta)^n$ for some $n \in \IZ$. \end{proof}

 We now restrict our attention to composition operators $C_{\psi}$ that are induced by linear-fractional maps.  We first consider the parabolic-induced composition operators.
 
  \begin{lemma}\label{gettheparabolics} If $\varphi$ is a linear-fractional, non-automorphism self-map of $\ID$ with $\varphi(\zeta)=\zeta$ for some
  $\zeta \in \IT$, then $C^*(\IP_{\zeta}) \subseteq C^*(C_{\varphi}, \mathcal{K}).$ \end{lemma}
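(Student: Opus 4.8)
The plan is to produce a single parabolic, non-automorphism self-map of $\ID$ fixing $\zeta$ whose induced composition operator lies in $C^*(C_{\varphi},\mathcal{K})$, and then to let Theorem~\ref{paraboliciso} do the rest. Let $\sigma=\sigma_{\varphi}$ be the Krein adjoint of $\varphi$. As recalled in Section~\ref{prelim}, since $\varphi$ is a linear-fractional non-automorphism with $\varphi(\zeta)=\zeta$, the map $\sigma$ is again a linear-fractional, non-automorphism self-map of $\ID$ with $\sigma(\zeta)=\zeta$, and (\ref{adjform}) yields a compact operator $K$ with $C_{\varphi}^{*}=|\varphi^{\prime}(\zeta)|^{-1}C_{\sigma}+K$. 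Because $C^*(C_{\varphi},\mathcal{K})$ is a unital C$^*$-algebra containing $\mathcal{K}$, it contains $C_{\varphi}^{*}$, hence $C_{\sigma}=|\varphi^{\prime}(\zeta)|\bigl(C_{\varphi}^{*}-K\bigr)$, and therefore also the product $C_{\sigma}C_{\varphi}$.

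Next I would identify $C_{\sigma}C_{\varphi}$ explicitly. Since $C_{\psi_{1}\circ\psi_{2}}=C_{\psi_{2}}C_{\psi_{1}}$ for any analytic self-maps, we have $C_{\sigma}C_{\varphi}=C_{\varphi\circ\sigma}$, and Theorem~\ref{parabolicandourmaps}(1), applied with $\eta=\zeta$, gives $\varphi\circ\sigma=\rho_{\zeta,2b}$, where $b>0$ is the unique positive number with $\rho_{\zeta,b}(\ID)=\varphi(\ID)$. Thus $C_{\rho_{\zeta,2b}}\in C^*(C_{\varphi},\mathcal{K})$, and since $2b>0$ the map $\rho_{\zeta,2b}$ is a genuine parabolic, non-automorphism self-map of $\ID$ fixing $\zeta$, i.e.\ $C_{\rho_{\zeta,2b}}\in\IP_{\zeta}$. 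Finally, by the last sentence of Theorem~\ref{paraboliciso} applied with $\rho=\rho_{\zeta,2b}$ we get $C^*(C_{\rho_{\zeta,2b}})=C^*(\IP_{\zeta})$; since $C^*(C_{\varphi},\mathcal{K})$ contains the operator $C_{\rho_{\zeta,2b}}$ it contains the C$^*$-subalgebra it generates, namely $C^*(\IP_{\zeta})$, which is the desired inclusion.

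I do not expect a genuine obstacle here: the argument is pure bookkeeping with the cited results, with no essential-norm estimates involved (unlike the previous lemma). The two points that merit care are getting the composition order right, so that the operator manufactured from $C_{\varphi}$ and $C_{\varphi}^{*}$ really is $C_{\varphi\circ\sigma}$ (which Theorem~\ref{parabolicandourmaps} recognizes as parabolic), and checking that the translation number $2b$ is strictly positive, so that $\rho_{\zeta,2b}$ is not an automorphism and the final clause of Theorem~\ref{paraboliciso} genuinely applies. Both are immediate from the statements quoted above.
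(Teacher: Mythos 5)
Your proof is correct and follows essentially the same route as the paper: use (\ref{adjform}) to place $C_{\sigma}$ in $C^*(C_{\varphi},\mathcal{K})$ modulo compacts, multiply by $C_{\varphi}$ to land on a composition operator induced by a parabolic non-automorphism via Theorem~\ref{parabolicandourmaps}(1), and invoke the last clause of Theorem~\ref{paraboliciso}. The only (immaterial) difference is that the paper works with $C_{\varphi}C_{\varphi}^{*}$, producing $C_{\sigma\circ\varphi}=C_{\rho_{\zeta,2c}}$, whereas you work with $C_{\sigma}C_{\varphi}=C_{\varphi\circ\sigma}=C_{\rho_{\zeta,2b}}$.
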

 \begin{proof}   As noted in Theorem \ref{parabolicandourmaps}, $\sigma \circ \rho$ is a parabolic, non-automorphism self-map of $\ID.$  By (\ref{adjform}), there exists a compact operator $K$ such that $C_{\varphi}C_{\varphi}^* = (\varphi^{\prime}(\zeta))^{-1} C_{\varphi}C_{\sigma} +K$.  Hence $C_{\sigma \circ \varphi}  \in C^*(C_{\varphi}, \mathcal{K})$, and thus $C^*(\IP_{\zeta}) \subset C^*(C_{\varphi}, \mathcal{K})$ by Theorem \ref{paraboliciso}. \end{proof}
 
 Note that this lemma implies that, for all $\zeta \in \IT$, $C^*(\mathcal{F}_{\zeta})$ equals $C^*((\mathcal{F}_{\zeta} \setminus \IP_{\zeta}), \mathcal{K})$, the unital C$^*$-algebra generated by $\mathcal{K}$ and the collection of all composition operators induced by linear-fractional, non-automorphism self-maps $\varphi$ of $\ID$ that fix $\zeta$ and have $\varphi^{\prime}(\zeta) \neq 1.$

 We now fully characterize the non-compact, linear-fractionally-induced composition operators that are contained in $C^*(C_{\varphi}, \mathcal{K})$.  The proof follows the outline of the discussion preceding Theorem 7 in \cite{KrieteMacCluerMoorhousePP}.

 \begin{theorem}\label{whatsin} Let $\varphi$ be a linear-fractional, non-automorphism self-map of $\ID$ that fixes a point $\zeta \in \IT$, and suppose $\psi$ is a linear-fractional self-map of $\ID$ that satisfies $||\psi||_{\infty}=1$ and $C_{\psi} \neq I$. Then $C_{\psi} \in C^{*}(C_{\varphi}, \mathcal{K})$ if and only if $\psi(\zeta)=\zeta$, $\psi^{\prime}(\zeta)=\left(\varphi^{\prime}(\zeta)\right)^n$ for some $n \in \IZ$, and $\psi$ is not an automorphism of $
 \ID$. \end{theorem}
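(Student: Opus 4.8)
My plan is to prove the two implications separately; the forward direction ($C_\psi \in C^*(C_\varphi, \mathcal K)$ implies the listed conditions) falls out of Lemma \ref{whatthederivmustbe}, while the converse is the substantive part. For the forward implication, suppose $C_\psi \in C^*(C_\varphi, \mathcal K)$. Since $\psi$ is linear-fractional with $\|\psi\|_\infty = 1$, the closed disk $\psi(\overline{\ID})$ lies in $\overline{\ID}$ and touches $\IT$; by the maximum modulus principle the touching can occur only at boundary points of $\ID$, so $\psi$ carries at least one point of $\IT$ into $\IT$ and therefore $F(\psi) = J(\psi) \neq \emptyset$. As $C_\psi \neq I$, the map $\psi$ is not the identity, so Lemma \ref{whatthederivmustbe} yields $F(\psi) = \{\zeta\}$, $\psi(\zeta) = \zeta$, and $\psi'(\zeta) = (\varphi'(\zeta))^n$ for some $n \in \IZ$. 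Finally, $F(\psi) = J(\psi) = \{\zeta\}$ forces $\psi$ to be a non-automorphism: the closed disk $\psi(\overline{\ID})$ then meets $\IT$ only at $\zeta$, hence is internally tangent there, so $\psi(\ID) \subsetneq \ID$.

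For the converse, assume $\psi(\zeta) = \zeta$, $\psi'(\zeta) = (\varphi'(\zeta))^n$, and $\psi$ is not an automorphism. If $n = 0$, then $\psi$ is a parabolic non-automorphism fixing $\zeta$, so $C_\psi \in C^*(\IP_\zeta) \subseteq C^*(C_\varphi, \mathcal K)$ by Lemma \ref{gettheparabolics}. If $n < 0$, I would replace $\varphi$ by its Krein adjoint $\sigma_\varphi$: by (\ref{adjform}), $C^*(C_{\sigma_\varphi}, \mathcal K) = C^*(C_\varphi, \mathcal K)$, and $\sigma_\varphi$ is again a linear-fractional non-automorphism fixing $\zeta$ with $\sigma_\varphi'(\zeta) = \varphi'(\zeta)^{-1} \neq 1$ and $\psi'(\zeta) = (\sigma_\varphi'(\zeta))^{-n}$, $-n \ge 1$; so we may assume $n \ge 1$. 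Conjugating $\ID$ onto the half-plane $\Upsilon$ by $\kappa(z) = (\zeta + z)/(\zeta - z)$, the conjugate of $\varphi$ has the form $w \mapsto \varphi'(\zeta)^{-1} w + c$ with $\RE c > 0$, and the $n$-fold composite $\varphi^{[n]}$ is a non-automorphism fixing $\zeta$ with conjugate $w \mapsto \varphi'(\zeta)^{-n} w + \mu$ for some $\mu$ with $\RE \mu > 0$, while $C_{\varphi^{[n]}} = C_\varphi^{\,n} \in C^*(C_\varphi, \mathcal K)$. For each $a$ with $\RE a > 0$ let $\psi_a$ be the non-automorphism linear-fractional self-map of $\ID$ fixing $\zeta$ whose conjugate on $\Upsilon$ is $w \mapsto \varphi'(\zeta)^{-n} w + a$; every non-automorphism linear-fractional self-map of $\ID$ fixing $\zeta$ with derivative $\varphi'(\zeta)^n$ there is exactly one such $\psi_a$, so $\psi = \psi_{a_0}$ for a unique $a_0$ with $\RE a_0 > 0$. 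When $\RE a > \RE \mu$ a short computation in the half-plane model gives $\psi_a = \rho_{\zeta, a - \mu} \circ \varphi^{[n]}$ with $\RE(a - \mu) > 0$, so $\rho_{\zeta, a - \mu}$ is a parabolic non-automorphism fixing $\zeta$ and
$$C_{\psi_a} = C_\varphi^{\,n}\, C_{\rho_{\zeta, a - \mu}} \in C^*(C_\varphi, \mathcal K)$$
by Lemma \ref{gettheparabolics}.

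It then remains to capture $C_{\psi_{a_0}}$ for the given $a_0$, whose real part may be at most $\RE \mu$; for this I would argue by analytic continuation. For each fixed $w \in \ID$, the point $\psi_a(w) = \kappa^{-1}(\varphi'(\zeta)^{-n}\kappa(w) + a)$ depends holomorphically on $a$ and stays in a compact subset of $\ID$ locally uniformly in $a \in \{\RE a > 0\}$, so $a \mapsto (C_{\psi_a}f)(w) = f(\psi_a(w))$ is holomorphic there for every $f \in H^2(\ID)$ and $w \in \ID$; combined with the locally bounded estimate $\|C_{\psi_a}\|^2 \le (1 + |\psi_a(0)|)/(1 - |\psi_a(0)|)$, the standard results on operator-valued holomorphic functions show that $a \mapsto C_{\psi_a}$ is a holomorphic $\mathcal{B}(H^2(\ID))$-valued function on the connected open set $\{\RE a > 0\}$. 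For every bounded linear functional $\Lambda$ on $\mathcal{B}(H^2(\ID))$ annihilating the norm-closed subspace $C^*(C_\varphi, \mathcal K)$, the holomorphic function $a \mapsto \Lambda(C_{\psi_a})$ vanishes on the nonempty open set $\{\RE a > \RE \mu\}$, hence on all of $\{\RE a > 0\}$; by Hahn--Banach this forces $C_{\psi_a} \in C^*(C_\varphi, \mathcal K)$ for every $a$ with $\RE a > 0$, and in particular $C_\psi = C_{\psi_{a_0}} \in C^*(C_\varphi, \mathcal K)$.

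I expect the main obstacle to be this last step of the converse: making rigorous that $a \mapsto C_{\psi_a}$ is norm-holomorphic (not merely strongly continuous) on $\{\RE a > 0\}$, together with the routine but somewhat delicate half-plane computations showing $\psi_a = \rho_{\zeta, a - \mu} \circ \varphi^{[n]}$ with $\rho_{\zeta, a - \mu}$ a genuine self-map when $\RE a > \RE \mu$. As an alternative that avoids analytic continuation, one could instead reach the missing $C_{\psi_{a_0}}$ by forming complex-linear combinations of the operators $C_{\psi_a}$ with $\RE a > \RE \mu$ and using the classification of compact linear combinations of linear-fractionally-induced composition operators \cite{KrieteMoorhouse:2007} to recognize such a combination, modulo $\mathcal K$, as a single $C_{\psi_{a_0}}$.
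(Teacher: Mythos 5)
Your proof is correct, and its skeleton matches the paper's: forward direction from Lemma \ref{whatthederivmustbe} (you also supply the small details the paper leaves implicit, namely that $\|\psi\|_{\infty}=1$ forces $F(\psi)\neq\emptyset$ and that $F(\psi)=\{\zeta\}$ rules out automorphisms), reverse direction split into $n=0$ (Lemma \ref{gettheparabolics}), $n<0$ (pass to the Krein adjoint via (\ref{adjform})), and $n\geq 1$ handled by writing $\psi$ as a parabolic composed with $\varphi_n$ where possible and extending by analytic continuation in the translation parameter otherwise. The genuine difference is in how that continuation is executed. The paper keeps both sides of an explicit operator identity in play: using the polar decomposition $C_{\varphi_n}=U_{\varphi_n}(C_{\varphi_n}^*C_{\varphi_n})^{1/2}$ it shows $[C_{\rho_{\zeta,c}\circ\varphi_n}]=[(\varphi^{\prime}(\zeta))^{nc/(2b_n)}U_{\varphi_n}(C_{\varphi_n}^*C_{\varphi_n})^{1/2+c/(2b_n)}]$ for $\RE c>0$, notes that the right-hand side lies in $C^*(C_{\varphi_n},\mathcal K)$ for all $\RE c>-b_n$ by the functional-calculus discussion in \cite{KrieteMacCluerMoorhouse:2007}, and continues the identity of the two holomorphic coset-valued functions across the strip $-b_n<\RE c\leq 0$. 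You instead continue the \emph{membership} statement directly: norm-holomorphy of $a\mapsto C_{\psi_a}$ (locally bounded plus holomorphy against reproducing kernels) together with Hahn--Banach forces $C_{\psi_a}$ into the closed subspace $C^*(C_{\varphi},\mathcal K)$ for all $\RE a>0$ once it is known there on the open subregion $\RE a>\RE\mu$. Your route avoids the polar decomposition and the fractional powers $(C_{\varphi_n}^*C_{\varphi_n})^{1/2+c/(2b_n)}$ entirely, at the cost of not producing the explicit formula (\ref{equalincl}) for the coset, which the paper reuses later (e.g.\ in the spectral computations of Section \ref{applicationsection}); your half-plane parametrization also replaces the paper's two-case comparison of $\psi(\ID)$ with $\varphi_n(\ID)$ by a single uniform computation, and your $\RE a>0$ range agrees with the paper's $\RE c>-b_n$ since $b_n=\RE\mu$. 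The step you flag as the main obstacle --- weak holomorphy against point evaluations plus a local norm bound implying norm-holomorphy --- is standard and is essentially the same argument the paper invokes from \cite{KrieteMacCluerMoorhousePP} to justify holomorphy of $[C_{\rho_{\zeta,c}\circ\varphi_n}]$, so there is no gap there.
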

 \begin{proof}

 The forward direction is an immediate corollary of Lemma \ref{whatthederivmustbe}. For the reverse direction, suppose $\psi$ is a linear-fractional, non-automorphism self-map of $\ID$ that fixes $\zeta$ and satisfies
  $\psi^{\prime}(\zeta)= \varphi^{\prime}(\zeta)^n$ for some $n \in \IZ.$  If $n=0$, then
   $C_{\psi} \in C^*(C_{\varphi}, \mathcal{K})$ by Lemma \ref{gettheparabolics}. 
   
    If $n \in \IN$, 
   consider the map $\varphi_n$, the $n$th iterate of $\varphi$.  If $\psi(\ID) \subsetneq \varphi_n(\ID)$, then we define 
   $\rho:= \psi \circ \varphi_n^{-1},$ which is a parabolic, non-automorphism self-map of $\ID$ that
    fixes $\zeta$. Thus, there exists $a \in \IC$ with $\RE a >0$ such that $\rho=\rho_{\zeta, a}.$
Then $\psi = \rho_{\zeta, a} \circ \varphi_n,$ and  $C_{\psi} = C^n_{\varphi}C_{\rho_{\zeta, a}}
      \in C^*(C_{\varphi}, \mathcal{K}).$  If $\varphi_n (\ID) \subseteq \psi(\ID),$ define $\rho:= 
      \varphi_n \circ \psi^{-1}.$ The map $\rho$ is again a parabolic self-map of $\ID$, but, in this case, it can be an automorphism.  Thus, $ \rho = 
      \rho_{\zeta, \hat{a}}$ for some   $\hat{a} \in \IC$ with $\RE \hat{a} \geq 0$, and $\psi=\rho_{\zeta, \hat{a}}^{-1} \circ \varphi_n = \rho_{\zeta, 
      -\hat{a}} \circ \varphi_n.$   Since $\psi$ is a non-automorphism self-map of $\ID$, 
      $\RE \hat{a} < b_n$, where $b_n$ is the unique positive number satisfying
      $\rho_{\zeta, b_n}(\ID)=\varphi_n(\ID)$.  The existence of $b_n$ is guaranteed by Theorem \ref{parabolicandourmaps}.  Thus, regardless of the relationship between 
      $\psi(\ID)$ and $\varphi_n(\ID)$, we can write $\psi=\rho_{\zeta, c} \circ \varphi_n$ for
       some $c \in \IC$ with $\RE c > -b_n.$
       
       If $\RE a>0$, then, by applying  (\ref{adjform}), Theorem \ref{parabolicandourmaps}, and the properties of $*$-homomorphisms, we obtain that 
        \begin{align*} [(C_{\varphi_n}^*C_{\varphi_n})^a] &=[ C_{\varphi_n}^*C_{\varphi_n}]^a = \left(1/(\varphi^{\prime}(\zeta))^n\right)^a 
        [C_{\rho_{\zeta, 2b_n}}]^a = (1/(\varphi^{\prime}(\zeta))^{na})[C_{\rho_{\zeta, 2b_na}}].\end{align*} 
      By using the polar decomposition of $C_{\varphi_n}$ and relabeling $2b_na$ as $c$, we find that, for $\RE c >0$,
  
    \begin{align}\left[C_{\rho_{\zeta, c} \circ \varphi_n}\right] &= \left[C_{\varphi_n} C_{\rho_{\zeta, c}}\right]
       = \left[U_{\varphi_n}(C_{\varphi_n}^*C_{\varphi_n})^{1/2}C_{\rho_{\zeta, c}}\right]  \notag \\ &=\left[(\varphi^{\prime}(\zeta))^{nc/(2b_n)}U_{\varphi_n}(C_{\varphi_n}^*C_{\varphi_n})^{1/2+c/(2b_n)}\right].
      \label{equalincl} \end{align}    
      By the discussion in  \cite[Section 4.2]{KrieteMacCluerMoorhouse:2007}, $(\varphi^{\prime}(\zeta))^{nc/(2b_n)}U_{\varphi_n}(C_{\varphi_n}^*C_{\varphi_n})^{1/2+c/(2b_n)} \in C^*(C_{\varphi_n}, \mathcal{K})$ for all $c \in \IC$ with $\RE c > -b_n$.  Applying the arguments used in \cite{KrieteMacCluerMoorhousePP}, one can easily show that 
 $\left[(\varphi^{\prime}(\zeta))^{nc/(2b_n)}U_{\varphi_n}(C_{\varphi_n}^*C_{\varphi_n})^{1/2+c/(2b_n)}\right]$  and $[C_{\rho_{\zeta, c} \circ \varphi_n}]$ are both holomorphic functions of $c$ on $\{c\in \IC: \RE c >-b_n\}.$ 
Thus, (\ref{equalincl}) holds for all $c \in \IC$ with $\RE c > -b_n$, which proves that $C_{\psi} \in C^*(C_{\varphi}, \mathcal{K}).$

    The proof for $n <0$ is similar with $\sigma$ taking the place of $\varphi$ in the arguments.
 \end{proof}
 
\section{The Structures of $C^*(\mathcal{F}_{\zeta})$, $C^*(C_{\varphi}, \mathcal{K})$, and $C^*(C_{\varphi_1}, \ldots, C_{\varphi_n}, \mathcal{K})$ Modulo the Ideal of Compact Operators}\label{structuresection}

   For $\zeta \in \IT$ and $t>0$, we define the automorphism $\Psi_{\zeta, t}$ of $\ID$ by \begin{equation}\label{psit} \Psi_{\zeta, t}(z)= \frac{(t+1)z + (1-t)\zeta}{(1-t)\overline{\zeta}z + (1+t)}. \end{equation}   Note that $\Psi_{\zeta, t}(\zeta)=\zeta$, $\Psi^{\prime}_{\zeta, t}(\zeta)=t$,  $\Psi^{\prime \prime}_{\zeta, t}(\zeta)=(t^2-t)\overline{\zeta}$, and $\Psi^{-1}_{\zeta, t}(0)=\zeta(t-1)/(t+1)$.  Straightforward calculations show that, for all $\zeta \in \IT$ and $t_1, t_2 >0$, \begin{equation}\label{psicommute}\Psi_{\zeta, t_1}\circ \Psi_{\zeta, t_2} = \Psi_{\zeta, t_1t_2},\end{equation}  so $\{\Psi_{\zeta, t} : t>0\}$ is an abelian group of automorphisms.  The relationship between the composition operators induced by these automorphisms and the composition operators in $\mathcal{F}_{\zeta}$ is described by the following lemma and its proof.
   
   \begin{lemma}\label{formoffullfixset} Let $\zeta \in \IT$.  Then $\mathcal{F}_{\zeta} = \{C_{\rho_{\zeta, a}}C_{\Psi_{\zeta, t}} : a \in \Upsilon, t>0\}$.  \end{lemma}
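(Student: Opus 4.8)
The plan is to prove the two set inclusions separately, using throughout the composition-operator identity $C_{\mu}C_{\nu}=C_{\nu\circ\mu}$, which rewrites the right-hand side of the asserted equality as $\{C_{\Psi_{\zeta,t}\circ\rho_{\zeta,a}} : a\in\Upsilon,\ t>0\}$. Thus it suffices to show that a linear-fractional self-map $\varphi$ of $\ID$ satisfies $C_{\varphi}\in\mathcal{F}_{\zeta}$ if and only if $\varphi=\Psi_{\zeta,t}\circ\rho_{\zeta,a}$ for some $a\in\Upsilon$ and $t>0$.

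For the containment $\supseteq$, I would fix $a\in\Upsilon$ and $t>0$ and set $\varphi:=\Psi_{\zeta,t}\circ\rho_{\zeta,a}$. This is a linear-fractional self-map of $\ID$, being a composition of linear-fractional self-maps of $\ID$, and it fixes $\zeta$ since both $\rho_{\zeta,a}$ and $\Psi_{\zeta,t}$ do. It is not an automorphism: if it were, then $\rho_{\zeta,a}=\Psi_{\zeta,t}^{-1}\circ\varphi$ would be a composition of two automorphisms and hence an automorphism, contradicting that $\rho_{\zeta,a}$ is a non-automorphism whenever $\RE a>0$. Therefore $C_{\varphi}=C_{\rho_{\zeta,a}}C_{\Psi_{\zeta,t}}$ belongs to $\mathcal{F}_{\zeta}$.

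For the reverse containment, I would take an arbitrary $C_{\varphi}\in\mathcal{F}_{\zeta}$, so $\varphi$ is a linear-fractional, non-automorphism self-map of $\ID$ with $\varphi(\zeta)=\zeta$. As recalled in Section~\ref{intro}, $\varphi^{\prime}(\zeta)>0$; put $t:=\varphi^{\prime}(\zeta)$. Evaluating (\ref{psit}) at $t=1$ gives $\Psi_{\zeta,1}=\id$, so the group law (\ref{psicommute}) yields $\Psi_{\zeta,t}^{-1}=\Psi_{\zeta,1/t}$, whence $(\Psi_{\zeta,t}^{-1})^{\prime}(\zeta)=1/t$. Set $\psi:=\Psi_{\zeta,1/t}\circ\varphi$. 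Then $\psi$ is a linear-fractional self-map of $\ID$ fixing $\zeta$; it is not an automorphism (by the same reasoning as above, since $\varphi$ is not); and by the chain rule $\psi^{\prime}(\zeta)=(1/t)\varphi^{\prime}(\zeta)=1$. Since a linear-fractional, non-automorphism self-map of $\ID$ fixing a point of $\IT$ is parabolic exactly when its derivative there equals $1$, the map $\psi$ is parabolic, so $\psi=\rho_{\zeta,a}$ for some $a\in\IC$ with $\RE a\geq 0$; moreover $\RE a>0$, because $\rho_{\zeta,a}$ is an automorphism precisely when $\RE a=0$ while $\psi$ is not. Hence $a\in\Upsilon$ and $\varphi=\Psi_{\zeta,t}\circ\psi=\Psi_{\zeta,t}\circ\rho_{\zeta,a}$, so $C_{\varphi}=C_{\rho_{\zeta,a}}C_{\Psi_{\zeta,t}}$, completing the argument.

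I do not expect a genuine obstacle here: the content of the lemma is simply that every linear-fractional non-automorphism fixing $\zeta$ factors as a boundary-fixing automorphism with the matching derivative at $\zeta$ followed by a parabolic non-automorphism. The only points that require care are keeping the composition order of $C_{\mu}C_{\nu}$ straight, using (\ref{psicommute}) together with $\Psi_{\zeta,1}=\id$ to identify $\Psi_{\zeta,t}^{-1}$ with $\Psi_{\zeta,1/t}$ (so that the derivative bookkeeping works out), and correctly invoking the standard facts that $\varphi^{\prime}(\zeta)>0$ for such maps, that the ones with $\varphi^{\prime}(\zeta)=1$ are exactly the parabolic ones, and that the parabolic non-automorphisms fixing $\zeta$ are precisely the maps $\rho_{\zeta,a}$ with $a\in\Upsilon$.
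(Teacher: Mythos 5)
Your proof is correct, and for the nontrivial inclusion $\mathcal{F}_{\zeta}\subseteq\{C_{\rho_{\zeta,a}}C_{\Psi_{\zeta,t}}\}$ it takes a genuinely different route from the paper. The paper writes down an explicit candidate translation number $a_{\varphi}=(\varphi^{\prime\prime}(\zeta)\zeta-\varphi^{\prime}(\zeta)^2+\varphi^{\prime}(\zeta))/\varphi^{\prime}(\zeta)$, verifies $\RE a_{\varphi}>0$ by appealing to an explicit parametrization of the non-affine, non-automorphism self-maps fixing $\zeta$ (from Basor--Retsek), and then concludes $\varphi=\Psi_{\zeta,\varphi^{\prime}(\zeta)}\circ\rho_{\zeta,a_{\varphi}}$ by matching the value and first two derivatives at $\zeta$, using that a linear-fractional map is determined by these data. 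You instead peel off the automorphism: you set $\psi:=\Psi_{\zeta,1/t}\circ\varphi$ with $t=\varphi^{\prime}(\zeta)$, observe that $\psi$ is a non-automorphism self-map fixing $\zeta$ with $\psi^{\prime}(\zeta)=1$, and invoke the trichotomy recalled in Section~\ref{intro} (parabolic if and only if the boundary derivative is $1$) together with the classification of parabolic maps fixing $\zeta$ as the $\rho_{\zeta,a}$, $\RE a\geq 0$, with automorphisms exactly at $\RE a=0$. Your argument is shorter and avoids both the second-derivative computation and the Basor--Retsek form; what it gives up is the closed-form expression for $a_{\varphi}$, which the paper's proof produces as a byproduct. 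That explicit formula is not needed downstream --- the later references to ``the proof of Lemma~\ref{formoffullfixset}'' only use that the automorphism factor is $\Psi_{\zeta,\varphi^{\prime}(\zeta)}$ and that some $a_{\varphi}\in\Upsilon$ exists, both of which your decomposition delivers --- so the substitution is harmless. The remaining steps (the order convention $C_{\mu}C_{\nu}=C_{\nu\circ\mu}$, the identification $\Psi_{\zeta,t}^{-1}=\Psi_{\zeta,1/t}$ via (\ref{psicommute}), and the easy inclusion $\supseteq$) match the paper and are handled correctly.
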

   \begin{proof}
      If $a \in \Upsilon$ and $t>0$, then $\Psi_{\zeta, t} \circ \rho_{\zeta, a}$ is a linear-fractional, non-automorphism self-map of $\ID$ that fixes $\zeta$.  Thus, $C_{\rho_{\zeta, a}}C_{\Psi_{\zeta, t}} \in \mathcal{F}_{\zeta}$ for all $a \in \Upsilon$ and $t>0$.  Note that $(\Psi_{\zeta, t} \circ \rho_{\zeta, a})^{\prime}(\zeta)=t$. 

        If $\varphi$ is a linear-fractional, non-automorphism self-map of $\ID$ that fixes $\zeta$, then $\varphi^{\prime}(\zeta) >0$ and $a_{\varphi}:=(\varphi^{\prime \prime}(\zeta)\zeta - \varphi^{\prime}(\zeta)^2+\varphi^{\prime}(\zeta))/\varphi^{\prime}(\zeta)$ has positive real part.  This statement is clear for affine maps.  
      For non-affine maps, it follows from the fact that every non-affine, linear-fractional self-map of $\ID$ that fixes $\zeta$ and is not an automorphism of $\ID$ has the form $$\varphi(z)=\frac{(1+\varphi^{\prime}(\zeta) +
       \varphi^{\prime}(\zeta)d)z + (d- \varphi^{\prime}(\zeta)- \varphi^{\prime}(\zeta) d) \zeta}{\overline{\zeta}z+d}$$ for some $d \in \IC$ with $\RE \left[(d-1)/(d+1)\right]>\varphi^{\prime}(\zeta)$ \cite{BasorRetsek:2006}.   It is easy to show that $\varphi$ and $\Psi_{\zeta, \varphi^{\prime}(\zeta)} \circ \rho_{\zeta, a_\varphi}$  have the same first and second derivatives at $\zeta$.  Since a linear-fractional map $\psi$ is fully determined by the values of $\psi(z_0)$, $\psi^{\prime}(z_0)$, and $\psi^{\prime \prime}(z_0)$ for any $z_0 \in\IC$ with $\psi(z_0) \neq \infty$, we obtain that $\varphi=\Psi_{\zeta, \varphi^{\prime}(\zeta)} \circ \rho_{\zeta, a_{\varphi}}$ and $C_{\varphi} \in \{C_{\rho_{\zeta, a}}C_{\Psi_{\zeta, \varphi^{\prime}(\zeta)^n}} : a \in \Upsilon, n \in \IZ\} \subseteq \{C_{\rho_{\zeta, a}}C_{\Psi_{\zeta, t}} : a \in \Upsilon, t >0\}$.
       \end{proof}

  If $a \in \Upsilon$ and $t>0$, then, by Theorems \ref{comprel} and \ref{unitaryrel}, there exist compact operators $K_1$ and $K_2$ such that \begin{align}\label{howtowritewithu} C_{\rho_{\zeta, a}}C_{\Psi_{\zeta, t}}&= C_{\rho_{\zeta, a}}T_{\frac{|(t+1)-\overline{\zeta}(t-1)z|}{2\sqrt{t}}}U_{\Psi_{\zeta,t }} + K_1 =t^{-1/2}C_{\rho_{\zeta, a}}U_{\Psi_{\zeta, t}} +K_2.\end{align}
   Thus, for all $\zeta \in \IT$, \begin{align}\label{formfullfixed}C^*\left(\mathcal{F}_{\zeta}\right) = C^*\left(\left\{C_{\rho_{\zeta, a}}U_{\Psi_{\zeta, t}} : a \in \Upsilon, t>0\right\}\right).  \end{align}
If $\varphi$ is a linear-fractional, non-automorphism self-map of $\ID$ that fixes $\zeta \in \IT$, then by Theorem \ref{whatsin} and the proof of Lemma \ref{formoffullfixset},  \begin{align}\label{formsingle}C^*\left(C_{\varphi}, \mathcal{K}\right)=C^*\left(\left\{C_{\rho_{\zeta, a}}U_{\Psi_{\zeta,\varphi^{\prime}(\zeta)^n}} : a \in \Upsilon, n \in \IZ\right\}\right).\end{align}
Similarly, if $n \in \IN$ and $\varphi_{1}, \ldots, \varphi_n$ are linear-fractional, non-automorphism self-maps of $\ID$ that fix a given point $\zeta \in \IT$, then \begin{align}\label{formlinind}& C^*(C_{\varphi_1}, \ldots, C_{\varphi_n}, \mathcal{K}) \\ & \quad = C^*\left(\left\{C_{\rho_{\zeta, a}} U_{\Psi_{\zeta, \varphi_1^{\prime}(\zeta)^{m_1} \ldots \varphi_n^{\prime}(\zeta)^{m_n}}} : a \in \Upsilon, (m_1, \ldots, m_n) \in \IZ^n\right\}\right). \notag \end{align}

The C$^*$-algebras on the right-hand sides of  (\ref{formfullfixed}), (\ref{formsingle}), and (\ref{formlinind})  are  subalgebras of C$^*$-algebras of the form $C^*(\IP_{\zeta}, \{U_{\gamma} : \gamma \in G\}),$ where $G$ is an abelian group of automorphisms of $\ID$ that fix the point $\zeta$.
  In (\ref{formfullfixed}), $G=\{\Psi_{\zeta, t} : t>0\},$ which is isomorphic to $\IR^+$, the multiplicative group of positive real numbers.    
In the setting  of (\ref{formsingle}), $G=\{\Psi_{\zeta, \varphi^{\prime}(\zeta)^n} : n \in \IZ\},$ which is isomorphic to $\IZ$ if $\varphi^{\prime}(\zeta) \neq 1.$  
For (\ref{formlinind}), $G=\{\Psi_{\zeta, \varphi_1^{\prime}(\zeta)^{m_1} \ldots \varphi_n^{\prime}(\zeta)^{m_n}} : (m_1, \ldots, m_n) \in \IZ^n\}.$
   This group is isomorphic to $\IZ^n$ if $\ln(\varphi_1^{\prime}(\zeta)), \ldots, \ln(\varphi_n^{\prime}(\zeta))$ are linearly independent over $\IZ$.

    \subsection{The Structure of $C^*(\IP_{\zeta}, \{U_{\gamma} : \gamma \in G\})/\mathcal{K}$}
    Motivated by the preceding discussion, we will determine the structure, modulo the ideal of compact operators, of the unital C$^*$-algebra generated by $\IP_{\zeta}$ and $\{U_{\gamma} :  \gamma \in G\}$, where $G$ is an abelian group  of automorphisms of $\ID$ that satisfy $\gamma(\zeta)=\zeta$ for a given $\zeta \in \IT$. We will show that this C$^*$-algebra satisfies the hypotheses of Theorem  \ref{isothmcross}   and is therefore a crossed product C$^*$-algebra.  The strategy employed in our arguments is similar to the one used by Jury in his investigation of the structure of $C^*(T_z, \{U_{\gamma} : \gamma \in G\})$ \cite{Jury:2007G}.  The main difference is that the unit circle is replaced by the closed unit interval.  The automorphisms in $G$ are not, in general, self-maps of the unit interval, so $G$ does not act on $C([0,1])$ via composition.  Hence the  action of  $G$ that appears in the crossed product  is more complicated in this setting.
    
   In the following, $\id$ denotes the identity map on $\IC$, and $G_d$ is the group $G$ with the discrete topology.
  
  \begin{theorem}\label{p1u}  Let $\zeta \in \IT$, and let $G$ be a collection of automorphisms of $\ID$ that fix $\zeta$.  Suppose that $G$ is an abelian group under composition and that $\gamma^{\prime}(\zeta)\neq 1$ for all $\gamma \in G \setminus \{\id\}$.  Define the action $\beta: G_d \rightarrow {\rm Aut}(C([0,1]))$ by $\beta_{\gamma}(f)(x):=f\left(x^{\gamma^{\prime}(\zeta)}\right)$ for all $\gamma \in G$, $f \in C([0,1])$, and $x \in [0,1]$.  Then there exists a $*$-homomorphism $\omega : C^*(\IP_{\zeta}, \{U_{\gamma} : \gamma \in G\}) \rightarrow C([0,1]) \rtimes_{\beta} G_d$ such that  $$0 \rightarrow \mathcal{K} \hookrightarrow C^*(\IP_{\zeta}, \{U_{\gamma} : \gamma \in G\}) \stackrel{\omega}{\rightarrow} C([0,1]) \rtimes_{\beta} G_d \rightarrow 0$$ is a short exact sequence. \end{theorem}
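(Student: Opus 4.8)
The plan is to follow the strategy Jury used in \cite{Jury:2007G} for $C^*(T_z, \{U_{\gamma} : \gamma \in G\})$, with the unit interval playing the role of the unit circle. Since $\mathcal{K} \subseteq C^*(\IP_{\zeta})$ by Theorem \ref{paraboliciso}, we have $\mathcal{K} \subseteq C^*(\IP_{\zeta}, \{U_{\gamma} : \gamma \in G\})$; so, writing $q$ for the quotient map of $\mathcal{B}(H^2(\ID))$ onto its Calkin algebra, it is enough to produce a $*$-isomorphism $\mu$ from $C([0,1]) \rtimes_{\beta} G_d$ onto $q\bigl(C^*(\IP_{\zeta}, \{U_{\gamma} : \gamma \in G\})\bigr) = C^*\bigl(q(\IP_{\zeta}), \{q(U_{\gamma}) : \gamma \in G\}\bigr)$. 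Then $\omega := \mu^{-1} \circ q$ is a surjective $*$-homomorphism with $\ker \omega = \mathcal{K} \cap C^*(\IP_{\zeta}, \{U_{\gamma} : \gamma \in G\}) = \mathcal{K}$, which is exactly the asserted short exact sequence. To construct $\mu$ I would apply Theorem \ref{isothmcross}, first fixing a faithful representation of the Calkin algebra on a Hilbert space so that its hypotheses apply verbatim.

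Set $\mathcal{A} := q(C^*(\IP_{\zeta}))$, a commutative, unital, separable C$^*$-algebra, and recall the $*$-isomorphism $\Gamma_{\zeta} : C([0,1]) \to \mathcal{A}$ of Theorem \ref{paraboliciso}, with $\Gamma_{\zeta}(x^a) = q(C_{\rho_{\zeta, a}})$. Each $q(U_{\gamma})$ is a unitary, and $\gamma \mapsto q(U_{\gamma})$ is a unitary representation of the discrete, amenable group $G$ by Theorem \ref{unitaryrel}(2) together with the commutativity of $G$. The crucial computation is the covariance relation
\begin{equation*} q(U_{\gamma})\, q(C_{\rho_{\zeta, a}})\, q(U_{\gamma})^* = q\bigl(C_{\rho_{\zeta, \gamma^{\prime}(\zeta)a}}\bigr) \qquad (\gamma \in G,\ a \in \Upsilon). \end{equation*}
I would obtain this from the identity $C_{\gamma} C_{\rho_{\zeta, a}} = C_{\rho_{\zeta, \gamma^{\prime}(\zeta)a}} C_{\gamma}$ of Theorem \ref{parabolicandourmaps}(2) (using $\gamma^{\prime}(\zeta) > 0$), substituting $[C_{\gamma}] = [T_{w_{\gamma}}]^{-1}[U_{\gamma}]$ from Theorem \ref{unitaryrel}(1) (here $[T_{w_{\gamma}}]$ is invertible in the Calkin algebra, $w_{\gamma} \in C(\IT)$ being positive), and then invoking Theorem \ref{comprel}, which collapses $[T_h]$ to the scalar $h(\zeta)$ whenever it stands next to the coset of a composition operator induced by a non-automorphism fixing $\zeta$; the factors $[T_{w_{\gamma}}]^{\pm 1}$ then cancel against $w_{\gamma}(\zeta)^{\mp 1}$. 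Since $\gamma^{\prime}(\zeta) > 0$, the map $x \mapsto x^{\gamma^{\prime}(\zeta)}$ is a self-homeomorphism of $[0,1]$, so $\beta_{\gamma} \in {\rm Aut}(C([0,1]))$; and $\beta$ is a homomorphism because $(\gamma_1 \circ \gamma_2)^{\prime}(\zeta) = \gamma_1^{\prime}(\zeta)\gamma_2^{\prime}(\zeta)$, using $\gamma_2(\zeta) = \zeta$. As $\Gamma_{\zeta}(\beta_{\gamma}(x^a)) = \Gamma_{\zeta}(x^{\gamma^{\prime}(\zeta)a}) = q(C_{\rho_{\zeta, \gamma^{\prime}(\zeta)a}})$, the covariance relation says precisely that $\Gamma_{\zeta}$ intertwines $\beta_{\gamma}$ with conjugation by $q(U_{\gamma})$; extending by continuity yields $q(U_{\gamma})\mathcal{A}q(U_{\gamma})^* = \mathcal{A}$, i.e.\ condition (\ref{actioncond}), and identifies the C$^*$-dynamical system $(\mathcal{A}, G_d, \widehat{q(U)})$ with $(C([0,1]), G_d, \beta)$ via $\Gamma_{\zeta}$.

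It remains to verify that $G$ acts topologically freely on $\mathcal{A}$, equivalently (transporting along $\Gamma_{\zeta}$) on $C([0,1])$ via $\beta$. Here $M(C([0,1])) = [0,1]$ with its usual topology, and for $\gamma \in G \setminus \{\id\}$ and the evaluation functional $\pi_x$ one has $\pi_x \circ \beta_{\gamma^{-1}} = \pi_{x^{1/\gamma^{\prime}(\zeta)}}$, which equals $\pi_x$ only when $x \in \{0, 1\}$, since $\gamma^{\prime}(\zeta) \neq 1$ by hypothesis. Given a finite set $G_0 \subset G$ and a nonempty open set $V \subseteq [0,1]$, any point $x \in V \cap (0,1)$ --- such a point exists because singletons are not open in $[0,1]$ --- satisfies $\pi_x \circ \beta_{\gamma^{-1}} \neq \pi_x$ for all $\gamma \in G_0 \setminus \{\id\}$, so the action is topologically free. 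Theorem \ref{isothmcross} now gives an isometric $*$-isomorphism $C([0,1]) \rtimes_{\beta} G_d \cong C^*(\mathcal{A}, \{q(U_{\gamma}) : \gamma \in G\}) = q\bigl(C^*(\IP_{\zeta}, \{U_{\gamma} : \gamma \in G\})\bigr)$, which is the map $\mu$; setting $\omega := \mu^{-1} \circ q$ completes the construction.

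The step I expect to be the main obstacle is the covariance computation: one must feed the relations of Theorems \ref{parabolicandourmaps}, \ref{unitaryrel}, and \ref{comprel} into one another in the correct order, keeping careful track of the Toeplitz symbols $w_{\gamma}$ and of the order in which the composition operators compose. Once the covariance relation is established, the verification of topological freeness, the application of Theorem \ref{isothmcross}, and the short-exact-sequence bookkeeping (which uses only $\mathcal{K} \subseteq C^*(\IP_{\zeta})$ and $\ker q = \mathcal{K}$) are all routine.
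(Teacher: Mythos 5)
Your proposal is correct and follows essentially the same route as the paper: identify the quotient as $C^*\bigl(C^*(\IP_{\zeta})/\mathcal{K}, \{[U_{\gamma}]\}\bigr)$, establish the covariance relation $[U_{\gamma}]\Gamma_{\zeta}(x^a)[U_{\gamma}]^* = \Gamma_{\zeta}(x^{a\gamma'(\zeta)})$ by combining Theorems \ref{parabolicandourmaps}, \ref{unitaryrel}, and \ref{comprel}, check topological freeness at points of $(0,1)$, and apply Theorem \ref{isothmcross}. The only difference is cosmetic: you invert $[T_{w_{\gamma}}]$ and cancel the scalars $w_{\gamma}(\zeta)^{\pm 1}$, whereas the paper keeps $T_{w_{\gamma}}C_{\gamma}U_{\gamma}^*$ together so that it collapses to $[U_{\gamma}U_{\gamma}^*]=[I]$ directly; both versions of the computation are valid.
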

  
  \begin{proof}  We first note that $C^*(\IP_{\zeta}, \{U_{\gamma}: \gamma \in G\})/ \mathcal{K}$ is the C$^*$-algebra generated by $C^*(\IP_{\zeta})/\mathcal{K}$ and $\{[U_{\gamma}] : \gamma \in G_d\}.$  The map $\gamma \mapsto [U_{\gamma}]$ is a group homomorphism of $G_d$ onto a group of unitary elements by Theorem \ref{unitaryrel} and the assumption that $G_d$ is abelian.  We want to apply Theorem \ref{isothmcross} to this C$^*$-algebra, so we need to show that \begin{equation}\label{neededcrossrel} [U_{\gamma}]\left(C^*(\IP_{\zeta})/\mathcal{K}\right)[U_{\gamma}^*] = C^*(\IP_{\zeta})/\mathcal{K} \end{equation} for all $\gamma \in G_d$.
  
  For each $\gamma \in G_d$, the map $\beta_{\gamma}$ is a $*$-automorphism of $C([0,1])$ since $\gamma^{\prime}(\zeta) >0$.  It is straightforward to verify that $\beta$ is an action of $G_d$ on $C([0,1])$.  Let $\Gamma_{\zeta}: C([0,1]) \rightarrow C^*(\IP_{\zeta})/\mathcal{K}$ be the $*$-isomorphism from Theorem  \ref{paraboliciso}.  We want to show that \begin{align}\label{covariance} [U_{\gamma}]\Gamma_{\zeta}(f)[U_{\gamma}^*]=\Gamma_{\zeta}(\beta_{\gamma}(f))\end{align} for all $\gamma \in G_d$ and $f \in C([0,1])$.  
  Since $\beta_\gamma$ is a $*$-automorphism of $C([0,1])$ and $\Gamma_{\zeta}$ is a $*$-isomorphism, this will prove that (\ref{neededcrossrel}) holds for all $\gamma \in G_d$.  Moreover, (\ref{covariance}) implies that, if we define the action $\hat{W}$ of $G_d$ on $C^*(\IP_{\zeta})/\mathcal{K}$ by $\hat{W}_{\gamma}(A)=[U_{\gamma}]A[U_{\gamma}^*]$ for all $\gamma \in G_d$ and $A \in C^*(\IP_{\zeta})/\mathcal{K}$, then \begin{align}\label{relbtactions} \hat{W_{\gamma}} \circ \Gamma_{\zeta} = \Gamma_{\zeta} \circ \beta_\gamma \end{align} for all $\gamma \in G_d.$
  
  We begin by considering functions of the form $x^a$ for $a \in \Upsilon \cup \{0\}$.  For all $\gamma \in G_d$, \begin{align*} [U_{\gamma}]\Gamma_{\zeta} \left(x^a\right)[U_{\gamma}^*]  &= \left[U_{\gamma}C_{\rho_{\zeta, a}}U_{\gamma}^*\right] 
= \left[T_{w_{\gamma}}C_{\gamma}C_{\rho_{\zeta, a}} U_{\gamma}^*\right] \displaybreak[1]\\
& = \left[T_{w_{\gamma}}C_{\rho_{\zeta, \gamma^{\prime}(\zeta)a}}C_{\gamma} U_{\gamma}^*\right] 
 = \left[C_{\rho_{\zeta, \gamma^{\prime}(\zeta)a}}T_{w_{\gamma}}C_{\gamma}U_{\gamma}^*\right]  \label{cov3} \\
& = \Gamma_{\zeta}\left(x^{a \gamma^{\prime}(\zeta)}\right)[U_{\gamma}U_{\gamma}^*] = \Gamma_{\zeta}\left(\beta_{\gamma}^G\left(x^{a}\right)\right).\notag
\end{align*}
The second and fifth equalities follow from Theorem \ref{unitaryrel}.  The third equality is by Theorem \ref{parabolicandourmaps}, and the fourth equality is a consequence of  Theorem \ref{comprel}.  Extending by linearity, we see that  (\ref{covariance}) holds for all $\gamma \in G_d$ and a dense collection of functions $f$ in $C([0,1])$.  Therefore, the relationship is true for all $f \in C([0,1]).$  

Every non-trivial multiplicative linear functional on $C^*(\IP_{\zeta})/\mathcal{K}$ has the form $ev_x \circ \Gamma^{-1}_{\zeta}$ for some $x \in [0,1]$, where $ev_x$ denotes the linear functional on $C([0,1])$ of evaluation at $x$.  If $\gamma \in G_d$ and $x \in [0,1]$, then, by (\ref{relbtactions}), $$(ev_x \circ \Gamma_{\zeta}^{-1}) \circ \hat{W}_{\gamma^{-1}} = ev_x \circ \beta_{\gamma^{-1}} \circ \Gamma_{\zeta}^{-1} = ev_{x^{(\gamma^{\prime}(\zeta))^{-1}}} \circ \Gamma_{\zeta}^{-1}.$$ 
If $x \in (0,1)$ and $\gamma \in G_d \setminus \{\id\}$,  then $x^{\gamma^{\prime}(\zeta)^{-1}} \neq x$ and $(ev_x \circ \Gamma_{\zeta}^{-1}) \circ \hat{W}_{\gamma^{-1}} \neq ev_x \circ \Gamma_{\zeta}^{-1}$ since $\gamma^{\prime}(\zeta) \neq 1$.   Thus, $G_d$ acts topologically freely on $C^*(\IP_{\zeta})/\mathcal{K}$ by the automorphisms $\hat{W}_{\gamma}$.  Hence, by Theorem \ref{isothmcross}, $C^*(C^*(\IP_{\zeta})/\mathcal{K}, \{[U_{\gamma}] : \gamma \in G_d\})$ is isometrically $*$-isomorphic to $C^*(\IP_{\zeta})/\mathcal{K} \rtimes_{\hat{W}} G_d.$  Since $\Gamma_{\zeta}$ is a $*$-isomorphism between $C([0,1])$ and $C^*(\IP_{\zeta})/\mathcal{K}$ and $\Gamma_{\zeta}$, $\hat{W}$, and $\beta$ satisfy (\ref{relbtactions}), $C^*(\IP_{\zeta})/\mathcal{K} \rtimes_{\hat{W}} G_d$ is isometrically $*$-isomorphic to $C([0,1]) \rtimes_{\beta} G_d$ by Lemma 2.65 in \cite{Williams:2007}.\end{proof}

If $\gamma$ is an automorphism of $\ID$ that fixes $\zeta \in \IT$, let $\gamma_n$ denote the $n$th iterate of $\gamma$ if $n >0$, the $|n|$th iterate of $\gamma^{-1}$ if $n < 0$, and the identity map if $n=0$.  Then $C^*(\IP_{\zeta}, U_{\gamma})/\mathcal{K} = C^*(\IP_{\zeta}, \{U_{\gamma_n} : n \in \IZ\})/\mathcal{K}$ and $\gamma_n^{\prime}(\zeta)=(\gamma^{\prime}(\zeta))^n$ for all $n \in \IZ$.  Thus, the following corollary is an immediate consequence of the preceding theorem.

\begin{corollary}\label{p2u} Suppose $\gamma$ is an automorphism of $\ID$ that fixes a point $\zeta \in \IT$ and satisfies $\gamma^{\prime}(\zeta) \neq 1$.  Define the action $\beta^{\gamma} : \IZ \rightarrow {\rm Aut}(C([0,1]))$ by $\beta^{\gamma}_n(f)(x):=f\left(x^{\gamma^{\prime}(\zeta)^n}\right)$ for all $n \in \IZ,$ $f \in C([0,1])$, and $x \in [0,1]$.  Then there exists a $*$-homomorphism $\omega^{\gamma}: C^*(\IP_{\zeta}, U_{\gamma}) \rightarrow C([0,1]) \rtimes_{\beta^{\gamma}} \IZ$ such that  $$ 0 \rightarrow \mathcal{K} \hookrightarrow C^*(\IP_{\zeta}, U_{\gamma}) \stackrel{\omega^{\gamma}}{\rightarrow} C([0,1]) \rtimes_{\beta^{\gamma}} \IZ \rightarrow 0$$ is a short exact sequence. \end{corollary}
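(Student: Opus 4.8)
\textit{Proof idea for Corollary \ref{p2u}.} The plan is to obtain the corollary by specializing Theorem \ref{p1u} to the cyclic group generated by $\gamma$. Set $G := \{\gamma_n : n \in \IZ\}$, the collection of all iterates of $\gamma$ and of $\gamma^{-1}$ together with $\id$. Each $\gamma_n$ is an automorphism of $\ID$ that fixes $\zeta$, the set $G$ is an abelian group under composition, and $\gamma_n^{\prime}(\zeta)=\left(\gamma^{\prime}(\zeta)\right)^n$. Since an automorphism of $\ID$ fixing a point of $\IT$ has positive derivative there, $\gamma^{\prime}(\zeta)$ is a positive real number, and because $\gamma^{\prime}(\zeta)\neq 1$ no nonzero power of it equals $1$; hence $\gamma_n^{\prime}(\zeta)\neq 1$, and in particular $\gamma_n\neq\id$, for every $n\neq 0$. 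Consequently $n\mapsto\gamma_n$ is a group isomorphism of $\IZ$ onto $G_d$, and $G$ satisfies all the hypotheses of Theorem \ref{p1u}.

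Next I would match up the two C$^*$-algebras and the two actions under this isomorphism. On the operator side, $C^*(\IP_{\zeta}, U_{\gamma})=C^*(\IP_{\zeta}, \{U_{\gamma_n} : n \in \IZ\})$: by part (2) of Theorem \ref{unitaryrel} each $U_{\gamma_n}$ differs by a compact operator from the $n$-fold product of $U_{\gamma}$ (or of $U_{\gamma^{-1}}=U_{\gamma}^*$), and $\mathcal{K}\subseteq C^*(\IP_{\zeta})$ by Theorem \ref{paraboliciso}, so each $U_{\gamma_n}$ already lies in $C^*(\IP_{\zeta}, U_{\gamma})$, and the reverse containment is trivial. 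On the crossed-product side, transporting the action $\beta\colon G_d\to{\rm Aut}(C([0,1]))$ of Theorem \ref{p1u} along $\IZ\cong G_d$, $n\mapsto\gamma_n$, gives $\beta^{\gamma}_n(f)(x)=\beta_{\gamma_n}(f)(x)=f\bigl(x^{\gamma_n^{\prime}(\zeta)}\bigr)=f\bigl(x^{\gamma^{\prime}(\zeta)^n}\bigr)$, which is exactly the action in the statement; hence $C([0,1])\rtimes_{\beta}G_d$ is $*$-isomorphic to $C([0,1])\rtimes_{\beta^{\gamma}}\IZ$ by functoriality of the crossed product under isomorphisms of dynamical systems (Lemma 2.65 in \cite{Williams:2007}). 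Composing the $*$-homomorphism $\omega$ from Theorem \ref{p1u} with this identification yields $\omega^{\gamma}$, and the short exact sequence of Theorem \ref{p1u} transports to the asserted one.

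There is no real obstacle here: the only points needing verification are that $\gamma$ has infinite order, which is immediate from $\gamma^{\prime}(\zeta)\neq 1$, and that $\beta$ pulls back to $\beta^{\gamma}$, which is a direct substitution. Everything else is an application of Theorem \ref{p1u} together with the observation recorded just before the corollary that $C^*(\IP_{\zeta}, U_{\gamma})/\mathcal{K}=C^*(\IP_{\zeta}, \{U_{\gamma_n} : n \in \IZ\})/\mathcal{K}$.
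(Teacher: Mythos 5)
Your proposal is correct and follows the same route as the paper: the paper also deduces the corollary from Theorem \ref{p1u} by taking $G=\{\gamma_n : n\in\IZ\}$, noting $C^*(\IP_{\zeta}, U_{\gamma})/\mathcal{K}=C^*(\IP_{\zeta}, \{U_{\gamma_n}:n\in\IZ\})/\mathcal{K}$ and $\gamma_n^{\prime}(\zeta)=(\gamma^{\prime}(\zeta))^n$, so that the action transports to $\beta^{\gamma}$ under $\IZ\cong G_d$. Your added verifications (that $\gamma$ has infinite order and that the $U_{\gamma_n}$ lie in $C^*(\IP_{\zeta},U_{\gamma})$ via Theorem \ref{unitaryrel}) simply fill in details the paper treats as immediate.
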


We now consider C$^*$-subalgebras of $C^*(\IP_{\zeta}, \{U_{\gamma} : \gamma \in G\})$ of the form described in (\ref{formfullfixed})--(\ref{formlinind}).

\begin{theorem}\label{p1usub}  Let $\zeta \in \IT$, and let $G$ be a collection of automorphisms of $\ID$ that fix $\zeta$.  Suppose $G$ is an abelian group under composition and that $\gamma^{\prime}(\zeta)\neq 1$ for all $\gamma \in G \setminus \{{\id}\}.$  Then the C$^{\, *}$-algebra $C^*(\{[C_{\rho_{\zeta, a}}U_{\gamma}] : a \in \Upsilon, \gamma \in G\})$ is isometrically $*$-isomorphic to the unitization of $C_0([0,1]) \rtimes_{\beta} G_d$, where the action $\beta: G_{d} \rightarrow {\rm Aut}(C_0([0,1]))$ is defined by $\beta_{\gamma} (f)(x):=f\left(x^{\gamma^{\prime}(\zeta)}\right)$ for all $\gamma \in G_d,$ $f \in C_{0}([0,1]),$ and $x \in [0,1]$. \end{theorem}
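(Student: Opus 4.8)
The plan is to transport the statement through the $*$-isomorphism furnished by Theorem \ref{p1u} and then recognize the resulting image inside $C([0,1])\rtimes_\beta G_d$ as a concrete unitization. Since $\omega$ has kernel $\mathcal{K}$, it descends to a $*$-isomorphism $\bar\omega\colon C^*(\IP_\zeta,\{U_\gamma:\gamma\in G\})/\mathcal{K}\to C([0,1])\rtimes_\beta G_d$, and unwinding its construction in the proof of Theorem \ref{p1u} (it is $\Lambda^{-1}$ followed by the isomorphism induced by $\Gamma_\zeta^{-1}$) one reads off $\bar\omega([C_{\rho_{\zeta,a}}])=x^a\chi_{\id}$ and $\bar\omega([U_\gamma])=1\chi_\gamma$, so that $\bar\omega([C_{\rho_{\zeta,a}}U_\gamma])=x^a\chi_\gamma$ for all $a\in\Upsilon$ and $\gamma\in G$. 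Hence $\bar\omega$ restricts to an isometric $*$-isomorphism of $C^*(\{[C_{\rho_{\zeta,a}}U_\gamma]:a\in\Upsilon,\gamma\in G\})$ onto the unital C$^*$-subalgebra $\mathcal{E}$ of $C([0,1])\rtimes_\beta G_d$ generated by $\{x^a\chi_\gamma:a\in\Upsilon,\gamma\in G\}$, and it remains only to identify $\mathcal{E}$.

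The next step is to determine the non-unital C$^*$-subalgebra $\mathcal{E}_0$ generated by $\{x^a\chi_\gamma:a\in\Upsilon,\gamma\in G\}$; the claim is that $\mathcal{E}_0=C_0([0,1])\rtimes_\beta G_d$, where $\beta$ is understood as restricted to the $\beta$-invariant ideal $C_0([0,1])$ as in the statement. Because $\gamma^\prime(\zeta)>0$ for every $\gamma\in G$, that ideal is indeed $\beta$-invariant, so by the exactness of the (full) crossed-product sequence for an invariant ideal (\cite{Williams:2007}) $C_0([0,1])\rtimes_\beta G_d$ embeds as an ideal of $C([0,1])\rtimes_\beta G_d$ with quotient $\bigl(C([0,1])/C_0([0,1])\bigr)\rtimes_\beta G_d$; as each generator $x^a\chi_\gamma$ lies in this ideal, $\mathcal{E}_0\subseteq C_0([0,1])\rtimes_\beta G_d$. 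For the reverse inclusion I would first invoke Stone--Weierstrass: the functions $x^a$, $a\in\Upsilon$, are closed under complex conjugation, separate the points of $(0,1]$, and vanish at no point of $(0,1]$, hence generate $C_0([0,1])$ as a C$^*$-algebra, so taking $\gamma=\id$ gives $C_0([0,1])\chi_{\id}\subseteq\mathcal{E}_0$. Then for $f\in C_0([0,1])$ and $\gamma\in G$ one has $(f\chi_{\id})(x^{1/n}\chi_\gamma)=(fx^{1/n})\chi_\gamma$ and $||fx^{1/n}-f||_\infty\to 0$ (the estimate being controlled near $0$ by the smallness of $f$ and on each $[\delta,1]$ by $x^{1/n}\to 1$ uniformly), so $C_0([0,1])\chi_\gamma\subseteq\mathcal{E}_0$ for every $\gamma$; since $\bigcup_{\gamma\in G}C_0([0,1])\chi_\gamma$ spans the dense subalgebra $C_c(G,C_0([0,1]))$ of $C_0([0,1])\rtimes_\beta G_d$, the claim follows.

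It remains to assemble the unitization. The unital C$^*$-algebra $\mathcal{E}$ generated by $\{x^a\chi_\gamma\}$ equals $\IC 1+\mathcal{E}_0=\IC 1+\bigl(C_0([0,1])\rtimes_\beta G_d\bigr)$, where $1$ denotes the unit of $C([0,1])\rtimes_\beta G_d$. The quotient map $C([0,1])\rtimes_\beta G_d\to\bigl(C([0,1])/C_0([0,1])\bigr)\rtimes_\beta G_d$ is unital with kernel $C_0([0,1])\rtimes_\beta G_d$, so $1\notin C_0([0,1])\rtimes_\beta G_d$; hence the canonical unital $*$-homomorphism $\bigl(C_0([0,1])\rtimes_\beta G_d\bigr)^+\to C([0,1])\rtimes_\beta G_d$ given by $(\lambda,b)\mapsto\lambda 1+b$ is injective, therefore isometric, with image precisely $\mathcal{E}$. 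Composing $\bar\omega^{-1}|_{\mathcal{E}}$ with the inverse of this map yields the asserted isometric $*$-isomorphism. I expect the main obstacles to be of a bookkeeping nature: verifying $\bar\omega([C_{\rho_{\zeta,a}}U_\gamma])=x^a\chi_\gamma$ forces one to unwind the construction of $\omega$ in Theorem \ref{p1u}, and confirming $\mathcal{E}_0=C_0([0,1])\rtimes_\beta G_d$ requires the Stone--Weierstrass and approximate-unit arguments above; once these are in hand, the passage to the unitization is purely formal.
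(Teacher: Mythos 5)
Your proposal is correct and follows essentially the same route as the paper: both transport the algebra through the isomorphism $\Lambda$ of Theorem \ref{p1u}, identify the image as $\IC 1$ plus the closure of $C_c(G_d, C_0([0,1]))$ inside $C([0,1])\rtimes_\beta G_d$, and use the invariant-ideal fact (\cite[Lemma 3.17]{Williams:2007}) together with the non-unitality of $C_0([0,1])\rtimes_\beta G_d$ to recognize the unitization. The only real difference is one sub-step: where you fill out $C_c(G_d,C_0([0,1]))$ by Stone--Weierstrass on $\{x^a\}$ and the approximate unit $x^{1/n}$, the paper instead verifies directly, via the covariance relation and $U_\gamma U_{\gamma_1}=U_{\gamma_1\circ\gamma}+K$, that the set $\{\Gamma_\zeta(f)[U_\gamma]: f\in C_0([0,1]),\ \gamma\in G\}$ is already closed under products and adjoints, so its linear span is a dense $*$-subalgebra.
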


\begin{proof}  
The action $\beta$ from Theorem \ref{p1u} satisfies $\beta_{\gamma}(C_0([0,1]))=C_0([0,1])$ for all $\gamma \in G$.  Since $C_0([0,1])$ is an ideal in $C([0,1])$, each $\beta_{\gamma}$ restricts to an automorphism of $C_0([0,1])$, which is again called $\beta_{\gamma}$, and $\beta$ is an action on $C_0([0,1])$.

Let $N_G:=\{\Gamma_{\zeta}(f)[U_{\gamma}] : f\in C_0([0,1]), \gamma \in G\}$.  For all $a \in \Upsilon$ and $\gamma \in G$, $\Gamma_{\zeta}(x^a)[U_{\gamma}]
=[C_{\rho_{\zeta, a}}U_{\gamma}] \in C^*(\{[C_{\rho_{\zeta, a}}U_{\gamma}] : a \in \Upsilon, \gamma \in G\})$. 
 Since finite linear combinations of the functions $\{x^a : a \in \Upsilon\}$ are dense in $C_0([0,1])$, $C^*(N_G)=C^*(\{[C_{\rho_{\zeta, a}}U_{\gamma}] : a \in \Upsilon, \gamma \in G\}).$ 

We want to show that the set $N_G$ is closed under adjoints and products.  Suppose $f_1, f_2 \in C_0([0,1])$ and $\gamma_1, \gamma_2 \in G$.  Then, by (\ref{covariance}) and Theorem \ref{unitaryrel}, \begin{equation*} \left(\Gamma_{\zeta}(f_1)\left[U_{\gamma_1}\right]\right)^* = \left[U_{\gamma_1^{-1}}\right]\Gamma_{\zeta}\left(\overline{f_1}\right) = \Gamma_{\zeta}\left(\beta_{\gamma_1^{-1}}\left(\overline{f_1}\right)\right)\left[U_{\gamma_1^{-1}}\right]  \in N_G\end{equation*} and 
\begin{align*} \Gamma_{\zeta}(f_1)\left[U_{\gamma_1}\right] \Gamma_{\zeta}(f_2) \left[U_{\gamma_2}\right] &= \Gamma_{\zeta}(f_1)\Gamma_{\zeta}\left(\beta_{\gamma_1}(f_2)\right) \left[U_{\gamma_1}\right]\left[U_{\gamma_2}\right]\\ &= \Gamma_{\zeta}\left(f_1\beta_{\gamma_1}(f_2)\right)\left[U_{\gamma_2 \circ \gamma_1}\right] \in N_G.\end{align*}  Thus, if we define $N_G^{\prime}$ to be the set of all finite linear combinations of cosets in $N_G$, then $N^{\prime}_G$  is a $*$-subalgebra of $C^*(\IP_\gamma, \{U_{\gamma} : \gamma \in G\})/ \mathcal{K}$, and $\IC[I] + N_{G}^{\prime}$ is dense in $C^*(N_G)$.

Let $\Lambda$ be the $*$-isomorphism from $C([0,1]) \rtimes_{\beta} G_d$ onto $C^*(\IP_{\gamma}, \{U_{\gamma} : \gamma \in G\})/\mathcal{K}$ of form (\ref{isoform}) that was obtained in the proof of Theorem \ref{p1u}.  For $c \in \IC$ and $\sum_{\gamma \in G_0} \Gamma_{\zeta}(f_{\gamma})[U_{\gamma}] \in N^{\prime}_G$, $$\left(\Lambda\right)^{ -1}\left(c[I] + \sum_{\gamma \in G_0} \Gamma_{\zeta}(f_{\gamma})[U_{\gamma}]\right) = c\chi_{\id} + \sum_{\gamma \in G_0} f_{\gamma}\chi_{\gamma}.$$  Thus, $\left(\Lambda\right)^{ -1}$ maps $\IC[I] + N_{G}^{\prime}$ onto $\IC \chi_{\id} + C_c(G_d, C_0([0,1])).$  Hence $C^*(N_G)$ is isomorphic to the closure of $\IC \chi_{\id} + C_c(G_d, C_0([0,1]))$ in $C([0,1]) \rtimes_{\beta} G_d$.

  By \cite[Lemma 3.17]{Williams:2007}, the closure of $C_c(G_d, C_0([0,1]))$ in $C([0,1]) \rtimes_{\beta} G_d$ is $*$-isomorphic to $C_0([0,1]) \rtimes_{\beta} G_d$, which is a non-unital C$^*$-algebra since $C_0([0,1])$ is non-unital (see \cite[II.10.3.9]{Blackadar:2006}).  Therefore, the closure of $\IC \chi_{\id} + C_c(G_d, C_0([0,1]))$ is isometrically $*$-isomorphic to the unitization of $C_0([0,1]) \rtimes_{\beta} G_d$. \end{proof}

\subsection{Structure Results for $C^*(\mathcal{F}_{\zeta})$, $C^*(C_{\varphi}, \mathcal{K}),$ and $C^*(C_{\varphi_1}, \ldots, C_{\varphi_n}, \mathcal{K})$}
We  now apply Theorem \ref{p1usub}  to determine the structures of $C^*(\mathcal{F}_{\zeta})$, $C^*(C_{\varphi}, \mathcal{K})$, and $C^*(C_{\varphi_1}, \ldots, C_{\varphi_n}, \mathcal{K})$, modulo the ideal of compact operators, for all $\zeta \in \IT$ and $C_{\varphi}, C_{\varphi_1}, \ldots, C_{\varphi_n} \in \mathcal{F}_{\zeta}$.   The following results are immediate consequences of Theorem \ref{p1usub} and the discussion preceding Theorem \ref{p1u}.

\begin{theorem} Let $\zeta \in \IT$.  Define the action $\beta: \IR^+_d \rightarrow {\rm Aut}(C_0([0,1]))$ by $\beta_t(f)(x):=f\left(x^t\right)$ for all $f \in C_0([0,1])$, $t \in \IR^+_d$,  and $x \in [0,1]$.  Then the C$^{\, *}$-algebra $C^*(\mathcal{F}_{\zeta})/\mathcal{K}$ is isometrically $*$-isomorphic to the unitization of $C_0([0,1]) \rtimes_{\beta} \IR^+_d$. \end{theorem}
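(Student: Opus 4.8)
The plan is to apply Theorem~\ref{p1usub} with the group $G = \{\Psi_{\zeta, t} : t > 0\}$. By equation~(\ref{psicommute}), this is an abelian group of automorphisms of $\ID$ fixing $\zeta$, and since $\Psi'_{\zeta, t}(\zeta) = t$, we have $\gamma'(\zeta) \neq 1$ for every $\gamma \in G \setminus \{\id\}$; thus the hypotheses of Theorem~\ref{p1usub} are met. By equation~(\ref{formfullfixed}) in the discussion preceding Theorem~\ref{p1u}, $C^*(\mathcal{F}_{\zeta}) = C^*(\{C_{\rho_{\zeta, a}}U_{\Psi_{\zeta, t}} : a \in \Upsilon, t > 0\})$. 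Passing to the Calkin algebra, $C^*(\mathcal{F}_{\zeta})/\mathcal{K} = C^*(\{[C_{\rho_{\zeta, a}}U_{\Psi_{\zeta, t}}] : a \in \Upsilon, t > 0\})$, which is exactly the C$^*$-algebra to which Theorem~\ref{p1usub} applies. That theorem identifies it, isometrically and $*$-isomorphically, with the unitization of $C_0([0,1]) \rtimes_{\beta} G_d$, where $\beta_{\gamma}(f)(x) = f(x^{\gamma'(\zeta)})$.

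The only remaining point is to rewrite the group and action in the stated form. Since $t \mapsto \Psi_{\zeta, t}$ is a bijection from $\IR^+$ onto $G$ carrying multiplication to composition (again by (\ref{psicommute})), the map $t \mapsto \Psi_{\zeta, t}$ is a group isomorphism $\IR^+ \to G$, hence $G_d \cong \IR^+_d$. Transporting the action $\beta$ of Theorem~\ref{p1usub} along this isomorphism replaces $\gamma'(\zeta)$ by $t$, so it becomes precisely $\beta_t(f)(x) = f(x^t)$, the action in the statement. An isomorphism of the acting groups intertwining the actions induces an isomorphism of the crossed products and hence of their unitizations, so $C^*(\mathcal{F}_{\zeta})/\mathcal{K}$ is isometrically $*$-isomorphic to the unitization of $C_0([0,1]) \rtimes_{\beta} \IR^+_d$.

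There is essentially no obstacle here: the theorem is, as the text says, an immediate consequence of Theorem~\ref{p1usub} together with the identification $\mathcal{F}_{\zeta} = \{C_{\rho_{\zeta, a}}C_{\Psi_{\zeta, t}} : a \in \Upsilon, t > 0\}$ from Lemma~\ref{formoffullfixset} and the norm-estimate rewriting in~(\ref{howtowritewithu})--(\ref{formfullfixed}). The one item worth stating carefully is the functoriality of the crossed product under an isomorphism of the acting group that intertwines the two actions (so that $C_0([0,1]) \rtimes_{\beta} G_d \cong C_0([0,1]) \rtimes_{\beta} \IR^+_d$); this is standard and can be cited from \cite{Williams:2007}, in the same spirit as the appeal to Lemma~2.65 there in the proof of Theorem~\ref{p1u}.
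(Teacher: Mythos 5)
Your proposal is correct and follows exactly the route the paper intends: the paper states this theorem as an immediate consequence of Theorem \ref{p1usub} applied to $G=\{\Psi_{\zeta,t}: t>0\}$ together with the identification (\ref{formfullfixed}), and your identification of $G_d$ with $\IR^+_d$ via $t\mapsto\Psi_{\zeta,t}$ (using $\Psi_{\zeta,t}'(\zeta)=t$ to match the actions) is precisely the implicit final step. No gaps.
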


\begin{theorem}\label{structureonephi} Let $\zeta \in \IT$, and suppose $\varphi$ is a linear-fractional, non-automorphism self-map of $\ID$ that satisfies  $\varphi(\zeta)=\zeta$ and $\varphi^{\prime}(\zeta)\neq 1.$  Define the action $\beta^{\varphi}: \IZ \rightarrow {\rm Aut}(C_0([0,1]))$ by $\beta^{\varphi}_{n}(f)(x):=f\left(x^{\varphi^{\prime}(\zeta)^n}\right)$ for all $f \in C_0([0,1])$, $n \in \IZ$,  and $x \in [0,1]$.  Then the C$^{\, *}$-algebra $C^*(C_{\varphi}, \mathcal{K})/\mathcal{K}$ is isometrically $*$-isomorphic to the unitization of $C_{0}([0,1]) \rtimes_{\beta^\varphi} \IZ.$ \end{theorem}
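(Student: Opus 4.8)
The plan is to read this off Theorem \ref{p1usub} by choosing the group $G$ to be the cyclic group generated by $\Psi_{\zeta,\varphi^{\prime}(\zeta)}$ and then transporting the resulting crossed product along a group isomorphism $\IZ \to G$.

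First I would note that $\mathcal{K}\subseteq C^*(\IP_{\zeta})\subseteq C^*(C_{\varphi},\mathcal{K})$ by Theorem \ref{paraboliciso} and Lemma \ref{gettheparabolics}, so the quotient $C^*(C_{\varphi},\mathcal{K})/\mathcal{K}$ is meaningful, and by (\ref{formsingle}) it is generated by the cosets $[C_{\rho_{\zeta,a}}U_{\Psi_{\zeta,\varphi^{\prime}(\zeta)^n}}]$ with $a\in\Upsilon$ and $n\in\IZ$. Next I would set $G:=\{\Psi_{\zeta,\varphi^{\prime}(\zeta)^n}:n\in\IZ\}$. By (\ref{psicommute}) this is an abelian group of automorphisms of $\ID$, each fixing $\zeta$, and $\Psi_{\zeta,\varphi^{\prime}(\zeta)^n}^{\prime}(\zeta)=\varphi^{\prime}(\zeta)^n$; since $\varphi^{\prime}(\zeta)>0$ and $\varphi^{\prime}(\zeta)\neq1$, this equals $1$ only when $n=0$, so $\gamma^{\prime}(\zeta)\neq1$ for all $\gamma\in G\setminus\{\id\}$, and the map $n\mapsto\Psi_{\zeta,\varphi^{\prime}(\zeta)^n}$ is an isomorphism of $\IZ$ onto $G$. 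Theorem \ref{p1usub} then applies to this $G$ and produces an isometric $*$-isomorphism from $C^*(\{[C_{\rho_{\zeta,a}}U_{\gamma}]:a\in\Upsilon,\gamma\in G\})$, which by the previous sentence equals $C^*(C_{\varphi},\mathcal{K})/\mathcal{K}$, onto the unitization of $C_0([0,1])\rtimes_{\beta}G_d$, where $\beta_{\gamma}(f)(x)=f(x^{\gamma^{\prime}(\zeta)})$.

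It then remains to identify $C_0([0,1])\rtimes_{\beta}G_d$ with $C_0([0,1])\rtimes_{\beta^{\varphi}}\IZ$. The group isomorphism $\theta:\IZ\to G_d$ given by $\theta(n)=\Psi_{\zeta,\varphi^{\prime}(\zeta)^n}$ intertwines the two actions, since $\beta_{\theta(n)}(f)(x)=f(x^{\varphi^{\prime}(\zeta)^n})=\beta^{\varphi}_n(f)(x)$; by functoriality of the crossed-product construction in the group variable, $\theta$ therefore induces an isometric $*$-isomorphism $C_0([0,1])\rtimes_{\beta^{\varphi}}\IZ\to C_0([0,1])\rtimes_{\beta}G_d$, implemented on $C_c$-functions by $\sum_n f_n\chi_n\mapsto\sum_n f_n\chi_{\theta(n)}$, and this extends uniquely to an isometric $*$-isomorphism of the unitizations. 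Composing with the isomorphism of the preceding paragraph yields the asserted isometric $*$-isomorphism. I do not expect any real obstacle here: every step is either an invocation of an already-established isomorphism or a transparent relabeling, and the only point requiring a moment's care is verifying that $G\cong\IZ$ and that $\theta$ carries $\beta^{\varphi}$ to $\beta$, both of which rest precisely on the hypothesis $\varphi^{\prime}(\zeta)\neq1$ together with the positivity $\varphi^{\prime}(\zeta)>0$ recorded in the introduction.
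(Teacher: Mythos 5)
Your proposal is correct and follows essentially the same route as the paper, which presents Theorem \ref{structureonephi} as an immediate consequence of Theorem \ref{p1usub} applied to the group $G=\{\Psi_{\zeta,\varphi^{\prime}(\zeta)^n} : n\in\IZ\}$ together with the identification (\ref{formsingle}). Your explicit verification that $n\mapsto\Psi_{\zeta,\varphi^{\prime}(\zeta)^n}$ is an isomorphism onto $G$ intertwining $\beta^{\varphi}$ with $\beta$ simply spells out the relabeling the paper leaves implicit.
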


\begin{theorem}\label{structurelinind} Let $\zeta \in \IT$ and $n \in \IN$.  Suppose $\varphi_1, \ldots, \varphi_n$ are linear-fractional, non-automorphism self-maps of $\ID$ that fix $\zeta$. Define the action $\beta^{\prime}: \IZ^n \rightarrow {\rm Aut}(C_0([0,1]))$ by $\beta^{\prime}_{(m_1, \ldots, m_n)}(f)(x):=
f\left(x^{\varphi_1^{\prime}(\zeta)^{m_1} \ldots \varphi_n^{\prime}(\zeta)^{m_n}}\right)$ for all $f \in C_0([0,1])$, $(m_1, \ldots, m_n) \in \IZ^n$,  and $x \in [0,1]$.
  If  $\ln(\varphi_1^{\prime}(\zeta)), \ldots, \ln(\varphi_n^{\prime}(\zeta))$ are linearly independent over $\IZ$, then the C$^{\, *}$-algebra
   $C^*(C_{\varphi_1}, \ldots, C_{\varphi_n} , \mathcal{K})/\mathcal{K}$ is isometrically $*$-isomorphic to the unitization of $C_0([0,1]) \rtimes_{\beta^{\prime}} \IZ^n.$ \end{theorem}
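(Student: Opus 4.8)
The plan is to recognize $C^*(C_{\varphi_1}, \ldots, C_{\varphi_n}, \mathcal{K})/\mathcal{K}$ as one of the C$^*$-algebras handled by Theorem \ref{p1usub} and then to rewrite the resulting crossed product over the group $\IZ^n$, using the isomorphism of abelian groups furnished by the linear-independence hypothesis. Apart from that single group identification, the argument is a direct invocation of Theorem \ref{p1usub} together with standard facts about crossed products.

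First I would use the already-established identity (\ref{formlinind}), which expresses $C^*(C_{\varphi_1}, \ldots, C_{\varphi_n}, \mathcal{K})$ as $C^*(\{C_{\rho_{\zeta, a}} U_{\Psi_{\zeta, t}} : a \in \Upsilon,\ t \in T\})$, where $T := \{\varphi_1^{\prime}(\zeta)^{m_1}\cdots\varphi_n^{\prime}(\zeta)^{m_n} : (m_1, \ldots, m_n) \in \IZ^n\}$. Passing to the Calkin algebra gives $C^*(\{[C_{\rho_{\zeta, a}} U_{\gamma}] : a \in \Upsilon,\ \gamma \in G\})$, where $G := \{\Psi_{\zeta, t} : t \in T\}$; by (\ref{psicommute}) and the discussion preceding Theorem \ref{p1u}, $G$ is an abelian group of automorphisms of $\ID$, each of which fixes $\zeta$.

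The one point that genuinely uses the hypothesis is the verification that $\gamma^{\prime}(\zeta) \neq 1$ for every $\gamma \in G \setminus \{\id\}$, together with the identification $G \cong \IZ^n$. Since $\Psi_{\zeta, t}^{\prime}(\zeta) = t$ and $\Psi_{\zeta, t_1} = \Psi_{\zeta, t_2}$ forces $t_1 = t_2$, the map $\theta : \IZ^n \rightarrow G$ sending $(m_1, \ldots, m_n)$ to $\Psi_{\zeta, \varphi_1^{\prime}(\zeta)^{m_1}\cdots\varphi_n^{\prime}(\zeta)^{m_n}}$ is a surjective group homomorphism whose kernel is $\{(m_1, \ldots, m_n) \in \IZ^n : \sum_{j=1}^n m_j \ln(\varphi_j^{\prime}(\zeta)) = 0\}$. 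The linear independence of $\ln(\varphi_1^{\prime}(\zeta)), \ldots, \ln(\varphi_n^{\prime}(\zeta))$ over $\IZ$ makes this kernel trivial, so $\theta$ is an isomorphism; the same computation shows that $\Psi_{\zeta, t}^{\prime}(\zeta) = t = 1$ occurs only for $(m_1, \ldots, m_n) = 0$, i.e.\ only for $\gamma = \id$. I do not expect a real obstacle here: this is a routine check, and all of the substantial work has already been carried out in Theorems \ref{p1u} and \ref{p1usub}.

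Finally, applying Theorem \ref{p1usub} to this $G$ shows that $C^*(C_{\varphi_1}, \ldots, C_{\varphi_n}, \mathcal{K})/\mathcal{K}$ is isometrically $*$-isomorphic to the unitization of $C_0([0,1]) \rtimes_{\beta} G_d$, where $\beta_{\gamma}(f)(x) = f(x^{\gamma^{\prime}(\zeta)})$. Transporting the action along the isomorphism $\theta$ and using $\Psi_{\zeta, t}^{\prime}(\zeta) = t = \varphi_1^{\prime}(\zeta)^{m_1}\cdots\varphi_n^{\prime}(\zeta)^{m_n}$, the C$^*$-dynamical system $(C_0([0,1]), G_d, \beta)$ is carried to $(C_0([0,1]), \IZ^n, \beta^{\prime})$; hence $C_0([0,1]) \rtimes_{\beta} G_d$ is isometrically $*$-isomorphic to $C_0([0,1]) \rtimes_{\beta^{\prime}} \IZ^n$ by the standard functoriality of the crossed product under isomorphisms of dynamical systems (cf.\ the use of Lemma 2.65 in \cite{Williams:2007} in the proof of Theorem \ref{p1u}), and so are their unitizations. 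This yields the stated result.
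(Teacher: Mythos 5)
Your proposal is correct and follows essentially the same route as the paper, which derives this theorem as an immediate consequence of Theorem \ref{p1usub} applied to the group $G=\{\Psi_{\zeta,\varphi_1^{\prime}(\zeta)^{m_1}\cdots\varphi_n^{\prime}(\zeta)^{m_n}} : (m_1,\ldots,m_n)\in\IZ^n\}$, identified with $\IZ^n$ via the linear-independence hypothesis as in the discussion preceding Theorem \ref{p1u}. Your explicit verification that this hypothesis is exactly what makes $\theta$ injective and guarantees $\gamma^{\prime}(\zeta)\neq 1$ for $\gamma\neq\id$ is the right (and only nontrivial) check.
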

   
   Although Theorem \ref{structureonephi} is a special case of Theorem \ref{structurelinind}, we list it separately due to the importance of the single operator case.

\section{K-theory and Spectral Results}\label{applicationsection}
In this section, we determine the K-theory of $C^*(C_{\varphi}, \mathcal{K})$ and investigate the essential spectra of operators in this C$^*$-algebra.  These two parts are connected because the K-theory results determine the possible values of the Fredholm index for  Fredholm operators in $C^*(C_{\varphi}, \mathcal{K})$.

\subsection{K-theory}  
To compute the K-groups of $C^*(C_{\varphi}, \mathcal{K})$, we apply two cyclic, six-term exact sequences: the standard six-term exact sequence of K-groups that corresponds to any given short exact sequence of C$^*$-algebras (see, for example, \cite[Theorem 12.1.2]{Rordam:2000}) and the Pimsner-Voiculescu exact sequence for crossed products by $\IZ$  \cite{PimsnerVoiculescu:1980}.  For more information about K-theory, we direct the reader to \cite{Rordam:2000, Blackadar:1998}.

     \begin{theorem}\label{onephiKtheory} Let $\zeta \in \IT$, and let $\varphi$ be a linear-fractional, non-automorphism self-map of $\ID$ that satisfies $\varphi(\zeta) = \zeta$ and $\varphi^{\prime}(\zeta) \neq 1.$  Then $K_0(C^*(C_{\varphi}, \mathcal{K})) \cong \IZ \oplus \IZ$ and $K_1(C^*(C_{\varphi}, \mathcal{K})) \cong 0$. Moreover, $K_0(C^*(C_{\varphi}, \mathcal{K}))$ is generated by $[I]_0$ and $[F]_0$, where $F$ is an arbitrary one-dimensional projection in $\mathcal{K}$.  \end{theorem}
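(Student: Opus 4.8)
The plan is to combine the structure theorem just established (Theorem \ref{structureonephi}) with two standard six-term exact sequences. Write $Q := C^*(C_{\varphi}, \mathcal{K})/\mathcal{K}$ and $A := C_0([0,1]) \rtimes_{\beta^{\varphi}} \IZ$, so that, by Theorem \ref{structureonephi}, $Q$ is $*$-isomorphic to the unitization $\widetilde{A}$ of $A$. First I would compute $K_*(Q)$, and then I would feed the result into the six-term sequence associated with the extension $0 \to \mathcal{K} \hookrightarrow C^*(C_{\varphi}, \mathcal{K}) \to Q \to 0$.

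To compute $K_*(Q)$, note that $C_0([0,1])$, the algebra of continuous functions on $[0,1]$ vanishing at $0$, is the cone over $\IC$ and hence contractible, so $K_0(C_0([0,1])) = K_1(C_0([0,1])) = 0$. The Pimsner--Voiculescu sequence for the $\IZ$-crossed product $A$ then reads, for $i = 0, 1$,
\[
K_i(C_0([0,1])) \xrightarrow{\ \id - (\beta^{\varphi}_{1})_*\ } K_i(C_0([0,1])) \longrightarrow K_i(A) \longrightarrow K_{i-1}(C_0([0,1])),
\]
and, since the outer terms vanish, $K_0(A) = K_1(A) = 0$. The unitization fits into a split short exact sequence $0 \to A \to \widetilde{A} \to \IC \to 0$, so its K-theory splits: $K_0(Q) \cong K_0(A) \oplus K_0(\IC) \cong \IZ$, generated by the class $[1_Q]_0$ of the unit, and $K_1(Q) \cong K_1(A) \oplus K_1(\IC) = 0$.

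Now I would apply the six-term exact sequence to $0 \to \mathcal{K} \to C^*(C_{\varphi}, \mathcal{K}) \to Q \to 0$. Here $K_0(\mathcal{K}) \cong \IZ$, generated by $[F]_0$ for any one-dimensional projection $F$, and $K_1(\mathcal{K}) = 0$. Since $K_1(\mathcal{K}) = 0$ and $K_1(Q) = 0$, the exponential map $K_0(Q) \to K_1(\mathcal{K})$ and the index map $K_1(Q) \to K_0(\mathcal{K})$ are both zero. Exactness then gives $K_1(C^*(C_{\varphi}, \mathcal{K})) = 0$ together with a short exact sequence
\[
0 \longrightarrow \IZ \xrightarrow{\ \iota\ } K_0(C^*(C_{\varphi}, \mathcal{K})) \xrightarrow{\ q_*\ } \IZ \longrightarrow 0,
\]
in which $\iota$ sends $1$ to $[F]_0$ and $q_*$ is induced by the quotient map. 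Since the quotient $\IZ$ is free, this sequence splits, and hence $K_0(C^*(C_{\varphi}, \mathcal{K})) \cong \IZ \oplus \IZ$. Finally, the quotient map is unital, so $q_*([I]_0) = [1_Q]_0$, which generates $K_0(Q) \cong \IZ$; thus for every $x \in K_0(C^*(C_{\varphi}, \mathcal{K}))$ there is $m \in \IZ$ with $q_*(x - m[I]_0) = 0$, whence $x - m[I]_0 \in \ker q_* = \IZ [F]_0$ and $x \in \IZ [I]_0 + \IZ [F]_0$. Therefore $[I]_0$ and $[F]_0$ generate $K_0(C^*(C_{\varphi}, \mathcal{K}))$.

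Given Theorem \ref{structureonephi}, there is no genuine obstacle in this argument: it is essentially bookkeeping with well-known exact sequences. The one point requiring care is that the abstract isomorphism $K_0 \cong \IZ \oplus \IZ$ does not by itself pin down the generators, so one must trace $[I]_0$ and $[F]_0$ explicitly through $\iota$ and $q_*$; this is possible precisely because the connecting maps in the six-term sequence vanish, which in turn follows automatically from the triviality of $K_1$ for both $\mathcal{K}$ and $Q$.
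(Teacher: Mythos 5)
Your proposal is correct and follows essentially the same route as the paper's proof: the Pimsner--Voiculescu sequence applied to $C_0([0,1]) \rtimes_{\beta^{\varphi}} \IZ$ using the contractibility of $C_0([0,1])$, the standard K-theory of the unitization, and the six-term sequence for $0 \to \mathcal{K} \to C^*(C_{\varphi}, \mathcal{K}) \to Q \to 0$ with the splitting over the free quotient $\IZ$. Your explicit tracing of the generators $[I]_0$ and $[F]_0$ through $\iota$ and $q_*$ matches the paper's argument, which likewise uses that $\omega(I)$ is the unit of the quotient and that $[F]_0$ generates $K_0(\mathcal{K})$.
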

    
    \begin{proof} 
    Let $\beta^{\varphi}: \IZ \rightarrow {\rm Aut}(C_0([0,1]))$ be the action defined in Theorem \ref{structureonephi}.  Since  $K_0(C_0([0,1])) \cong K_1(C_0([0,1])) \cong 0,$ 
 the Pimsner-Voiculescu exact sequence for the crossed product $C_0([0,1]) \rtimes_{\beta^{\varphi}} \IZ$ has the form
\[ \xymatrix{0 \ar[r] & 0 \ar[r] & K_0(C_0([0,1]) \rtimes_{\beta^{\varphi}} \IZ) \ar[d] \\
K_1(C_0([0,1]) \rtimes_{\beta^{\varphi}} \IZ) \ar[u] & 0 \ar[l] & 0 \ar[l]}.\] Thus, $K_0(C_0([0,1]) \rtimes_{\beta^{\varphi}} \IZ) \cong K_1(C_0([0,1]) \rtimes_{\beta^{\varphi}} \IZ) \cong 0$.

Let $\mathcal{C}$ denote the unitization of $C_0([0,1]) \rtimes_{\beta^{\varphi}} \IZ$.  By  \cite[Example 4.3.5 and Proposition 8.1.6]{Rordam:2000}, $K_0(\mathcal{C}) \cong K_0(C_0([0,1]) \rtimes_{\beta^{\varphi}} \IZ) \oplus \IZ \cong \IZ$ and $K_1(\mathcal{C}) \cong K_1(C_0([0,1]) \rtimes_{\beta^{\varphi}} \IZ) \cong 0$.   Straightforward calculations show that $K_0(\mathcal{C})$ is generated by $[I_{\mathcal{C}}]_0$.
By Theorem \ref{structureonephi}, there exists a short exact sequence $$0 \longrightarrow \mathcal{K} \longrightarrow C^*(C_{\varphi}, \mathcal{K}) \stackrel{\omega}{\longrightarrow} \mathcal{C} \longrightarrow 0.$$  
Since $K_0(\mathcal{K}) \cong \IZ$ and $K_1(\mathcal{K}) \cong 0,$ 
the six-term exact sequence corresponding to  this short exact sequence is \[\xymatrix{ \IZ \ar[r] & K_0(C^*(C_{\varphi}, \mathcal{K})) \ar[r] & \IZ \ar[d] \\ 0 \ar[u] & K_1(C^*(C_{\varphi}, \mathcal{K})) \ar[l] & 0 \ar[l] }\]  
All K-groups are abelian, and $\IZ$ is a free abelian group. Hence, $K_0(C^*(C_{\varphi}, \mathcal{K})) \cong \IZ \oplus \IZ$, and $K_1(C^*(C_{\varphi}, \mathcal{K})) \cong 0$ (see, for example, \cite[Corollary 1.3.38]{Maunder:1980}).  The proof of Theorem \ref{structureonephi} shows that $\omega(I)=I_{\mathcal{C}}$.  Since $[I_{\mathcal{C}}]_0$ generates $K_0(\mathcal{C})$ and $K_0(\mathcal{K})$ is generated by $[F]_0$, for any one-dimensional projection $F$ in $\mathcal{K}$, the generators of $K_0(C^*(C_{\varphi}, \mathcal{K}))$ are $[I]_0$ and $[F]_0$.
\end{proof}

By combining  the fact that $K_1(C^*(C_{\varphi}, \mathcal{K})/\mathcal{K}) \cong 0$ with the connection between the Fredholm index and the K-theory index map \cite[Proposition 9.4.2]{Rordam:2000},  we obtain the following result about the index of Fredholm operators in $C^*(C_{\varphi}, \mathcal{K}).$

\begin{corollary} Let $\zeta \in \IT$, and let $\varphi$ be a linear-fractional, non-automorphism self-map of $\ID$ that satisfies $\varphi(\zeta) = \zeta$ and $\varphi^{\prime}(\zeta) \neq 1.$   Then the Fredholm index of every Fredholm operator in $C^*(C_{\varphi}, \mathcal{K})$ is $0.$ \end{corollary}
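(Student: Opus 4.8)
The plan is to identify the Fredholm index with the value of the K-theory index map attached to the extension from Theorem \ref{structureonephi}, and then to observe that the relevant $K_1$-group vanishes. First I would record the data already in hand: by Theorem \ref{structureonephi} and the proof of Theorem \ref{onephiKtheory} there is a short exact sequence
\[ 0 \longrightarrow \mathcal{K} \longrightarrow C^*(C_{\varphi}, \mathcal{K}) \stackrel{\omega}{\longrightarrow} \mathcal{C} \longrightarrow 0, \]
where $\mathcal{C}$ denotes the unitization of $C_0([0,1]) \rtimes_{\beta^{\varphi}} \IZ$, the map $\omega$ is the quotient onto $C^*(C_{\varphi}, \mathcal{K})/\mathcal{K}$ with $\omega(I)=I_{\mathcal{C}}$, and $K_1(\mathcal{C}) \cong 0$ by the Pimsner--Voiculescu computation carried out in the proof of Theorem \ref{onephiKtheory}. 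Let $\partial : K_1(\mathcal{C}) \to K_0(\mathcal{K}) \cong \IZ$ be the index map in the six-term exact sequence associated to this short exact sequence, where $K_0(\mathcal{K})$ is identified with $\IZ$ via the rank functional.

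Next, given a Fredholm operator $T \in C^*(C_{\varphi}, \mathcal{K})$, I would check that $\omega(T)$ is invertible in $\mathcal{C}$. Since $\mathcal{K} \subseteq C^*(C_{\varphi}, \mathcal{K})$, Fredholmness of $T$ means precisely that its image in the Calkin algebra $\mathcal{B}(H^2(\ID))/\mathcal{K}$ is invertible; as $\mathcal{C}$ sits inside the Calkin algebra as a unital C$^*$-subalgebra containing the identity, spectral permanence for unital C$^*$-subalgebras forces $\omega(T)$ to be invertible already in $\mathcal{C}$. Thus $\omega(T)$ determines a class $[\omega(T)]_1 \in K_1(\mathcal{C})$. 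The connection between the Fredholm index and the K-theory index map (see \cite[Proposition 9.4.2]{Rordam:2000}), combined with naturality of the index map under the inclusion of the short exact sequence above into $0 \to \mathcal{K} \to \mathcal{B}(H^2(\ID)) \to \mathcal{B}(H^2(\ID))/\mathcal{K} \to 0$, yields that the Fredholm index of $T$ equals (up to the standard sign) the integer $\partial([\omega(T)]_1)$.

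Since $K_1(\mathcal{C}) \cong 0$, the class $[\omega(T)]_1$ vanishes, hence $\partial([\omega(T)]_1)=0$, and therefore the Fredholm index of $T$ is $0$. The one point meriting a little care is the naturality step: one must make precise that the Fredholm index of an operator that happens to lie in the subalgebra $C^*(C_{\varphi}, \mathcal{K})$ is computed by the index map of its own extension by $\mathcal{K}$ rather than by the index map of the full Calkin extension. This is immediate from the functoriality of the six-term exact sequence, because $\mathcal{K}$ is common to both extensions and $\mathcal{C}$ embeds into the Calkin algebra compatibly with $\omega$; I expect this to be the only step requiring more than a sentence, and even there it is routine.
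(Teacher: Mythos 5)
Your proposal is correct and follows essentially the same route as the paper: the paper's justification for this corollary is precisely the combination of $K_1(C^*(C_{\varphi},\mathcal{K})/\mathcal{K}) \cong 0$ (from the proof of Theorem \ref{onephiKtheory}) with \cite[Proposition 9.4.2]{Rordam:2000} relating the Fredholm index to the K-theory index map. Your additional remarks on spectral permanence and on naturality of the index map under the inclusion into the Calkin extension simply make explicit what the paper leaves implicit.
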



\subsection{Spectral Results for Operators in $C^*(\IP_{\zeta}, U_{\gamma})$}
Recall that if $\varphi$ is a linear-fractional, non-automorphism self-map of $\ID$ that fixes $\zeta \in \IT$ and satisfies $\varphi^{\prime}(\zeta) \neq 1,$ then $C^*(C_{\varphi}, \mathcal{K})/\mathcal{K}$ is a subalgebra of a C$^*$-algebra of the form 
 $C^*(\IP_{\zeta}, U_{\gamma})/\mathcal{K},$ where $\gamma$ is an automorphism of $\ID$ such that $\gamma(\zeta)=\zeta$ and $\gamma^{\prime}(\zeta) \neq 1.$  The C$^*$-algebra $C^*(\IP_{\zeta}, U_{\gamma})/\mathcal{K}$ satisfies the hypotheses of Theorem \ref{trajectorial} by the proof of Theorem \ref{p1u}, the discussion preceding Corollary \ref{p1usub}, and the fact that $C^*(\IP_{\zeta})/\mathcal{K} \cong C([0,1])$ is separable.  Thus, we can apply the trajectorial approach to determine the invertibility of elements in this C$^*$-algebra.  More specifically, $b \in C^*(\IP_{\zeta}, U_{\gamma})/\mathcal{K}$ is invertible if and only if $\tau_x(b)$ is an invertible operator on $\ell^2(\IZ)$ for all $x \in [0,1]$, where $\tau_x:=\tau_{ev_x \circ \Gamma_{\zeta}^{-1}}$ is the representation of $C^*(\IP_{\zeta}, U_{\gamma})/\mathcal{K}$ on $\ell^2(\IZ)$ defined according to  (\ref{howtodefinerep}).  
By (\ref{relbtactions}), the representations $\tau_x$ have the form 
\begin{align}\label{defoftaux} \left(\tau_x(\Gamma_{\zeta}(f))h\right)(m)&=f\left(x^{\gamma^{\prime}(\zeta)^m}\right)h(m) \\\ \left(\tau_x([U_{\gamma_n}])h\right)(m)&=h(m+n) \notag \end{align} for all $n, m \in \IZ$, $h \in \ell^2(\IZ)$, $f \in C([0,1])$, and $x \in [0,1]$.

Let $\{\xi_{j}\}_{j \in \IZ}$ be the orthonormal basis of $\ell^2(\IZ)$ that is defined by $\xi_{j}(m)=\delta_{jm}$ for all $j, m \in \IZ$.  Applying (\ref{defoftaux}), we see that if $f \in C([0,1])$, $j, n \in \IZ$, and $x \in [0,1]$, then \begin{align*} \tau_{x}\left(\Gamma_{\zeta}(f)[U_{\gamma_n}]\right)\xi_j=f\left(x^{\gamma^{\prime}(\zeta)^{j-n}}\right)\xi_{j-n}. \end{align*} 
Hence, if \begin{align}\label{formofb} b=\sum_{n=M}^N\Gamma_{\zeta}(f_n)[U_{\gamma_n}] \in C^*(\IP_{\zeta}, U_{\gamma})/\mathcal{K},\end{align} where $M, N \in \IZ$
and $f_n \in C([0,1])$ for all $M \leq n \leq N$, then, for all $x \in [0,1],$ the entries of the matrix of $\tau_x(b)$ 
with respect to the orthonormal basis $\{\xi_j\}_{j \in \IZ}$ are \begin{align}\tau_x(b)_{i,j}=\left<\tau_x(b)\xi_j, \xi_i\right>= \left\{ \begin{array}{ll} f_{j-i}\left(x^{\gamma^{\prime}(\zeta)^i}\right), & M \leq j-i \leq N\\ 0, & \text{otherwise}\end{array}\right. . \end{align}  We will denote this matrix by $[\tau_x(b)].$

For all $x \in [0,1]$, $[\tau_x(b)]$ is a bi-infinite, finite diagonal matrix for which the limits $  \lim_{i \rightarrow \pm \infty} \tau_{x}(b)_{i, i+n}$ exist for all $n \in \IZ$.
  Such operators are called discrete operators and have been studied by Karlovich and Kravchenko in their work on singular integral operators with a shift  \cite{KarlovichKravchenko:1983}.  
As Kravchenko and Litvinchuk have noted, one cannot expect to find efficient conditions for the invertibility of a general discrete operator but instead must
expect only algorithms for checking invertibility conditions that vary in effectiveness depending on the form of the operator \cite[p. 236]{KravchenkoLitvinchuk:1994}.  
Here, we present the algorithm from \cite{KarlovichKravchenko:1983} in the context of the operators $\tau_x(b)$.

If $b$ is of form (\ref{formofb}), we define two functions \begin{align}\label{ourpoly} p_{b, 0}(z)=\sum_{n=M}^N f_{n}(0)z^n \qquad \text{and} \qquad p_{b, 1}(z)=\sum_{n=M}^N f_n(1)z^n. \end{align}
If $\gamma^{\prime}(\zeta)>1$, set $\lambda_+:=0$ and $\lambda_-:=1.$ If $\gamma^{\prime}(\zeta)<1$, then set $\lambda_+:=1$ and $\lambda_-:=0.$ Since $(\lambda_{\pm})^{\gamma^{\prime}(\zeta)^n}
=\lambda_{\pm}$ for all $n \in \IZ$, $[\tau_{\lambda_{\pm}}(b)]$ is a Toeplitz matrix, and $\tau_{\lambda_{\pm}}(b)$ is unitarily equivalent to multiplication by $p_{b, \lambda_{\pm}}$ on $L^2(\IT)$
 via the unitary operator that sends $\xi_j$ to the basis vector $e^{-ij{\theta}} \in L^2(\IT).$  Hence $\tau_{\lambda_{\pm}}(b)$ is invertible if and only if $p_{b, \lambda_{\pm}}$ does not vanish on $\IT.$

If $x \in (0,1)$, then, in general, $[\tau_x(b)]$ is not a Toeplitz matrix, so we need the full theory of discrete operators.   If $p_{b, \lambda_{\pm}}$ does not vanish on $\IT$, then 
we define $\kappa_{b, \lambda_{\pm}}$ to be the winding number of $p_{b, \lambda_{\pm}},$ as a function on $\IT$, around $0$.   If  $\kappa_{b, \lambda_+}=\kappa_{b, 
\lambda_-}=\kappa$, we define, for $x \in (0,1)$ and $\nu, \mu \in \IN$ with $\nu > \mu$, the operators \begin{align*} \tau_x(b)^\nu = \left[\tau_x(b)_{i, j+\kappa}\right]_{i,j=-\nu}^{\nu} \qquad \text{and} \qquad \tau_x(b)^{\nu}_{\mu}=\left[\tau_x(b)_{i, j+\kappa}\right]_{i,j \in J_{\nu, \mu}} \end{align*} where $J_{\nu, \mu}=\{-\nu, \ldots, -\mu, \mu, \ldots, \nu\}.$  Notice that the entries on the $m$th superdiagonal of $\tau_{x}(b)^{\nu}$ are values of $f_{m+\kappa}$.

We can now state the main invertibility theorem for discrete operators from  \cite{KarlovichKravchenko:1983} in the language of our setting.  

\begin{theorem} {\rm \cite[Theorem 5.2]{KarlovichKravchenko:1983}} \label{invworkhorse} Suppose that $\gamma$ is an automorphism of $\ID$ that fixes $\zeta \in \IT$ and satisfies $\gamma^{\prime}(\zeta) \neq 1$ and that $b \in C^*(\IP_{\zeta}, U_{\gamma})/\mathcal{K}$ has form (\ref{formofb}). 
 Define the polynomials $p_{b, \lambda_{\pm}}$ by (\ref{ourpoly}).  Fix $x \in (0,1).$   
 If $\tau_x(b)$ is invertible, then  \begin{align}\label{inv1} p_{b, \lambda_{\pm}}(z) \neq 0 \, \, \text{for all} \, \, z \in \IT\end{align}
  and \begin{align}\label{inv2}\kappa_{b, \lambda_+}=\kappa_{b, \lambda_-}.\end{align} 

If (\ref{inv1}) and (\ref{inv2}) hold, then $\tau_x(b)$ is invertible if and only if there exists $\mu_0 >0$ such that, for all $\mu > \mu_0$, \begin{align}\label{inv3} \omega_{x, \mu}(b) : = \lim_{\nu \rightarrow \infty} \frac{\det \tau_{x}(b)^{\nu}}{\det \tau_x(b)^{\nu}_{\mu}} \neq 0.\end{align} \end{theorem}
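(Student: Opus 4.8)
The plan is to identify each operator $\tau_x(b)$, for $x \in (0,1)$, as a \emph{discrete operator} in the sense of Karlovich and Kravchenko and then quote their Theorem 5.2. The starting point is the matrix $[\tau_x(b)]$ computed just before the statement: it is bi-infinite, its only nonzero diagonals are the $n$-th diagonals with $M \le n \le N$, and the entry of the $n$-th diagonal in row $i$ is $f_n\!\left(x^{\gamma'(\zeta)^i}\right)$. Since $\gamma'(\zeta) > 0$ and $\gamma'(\zeta) \neq 1$, for $x \in (0,1)$ the quantity $\gamma'(\zeta)^i$ tends to $\infty$ as $i \to +\infty$ and to $0$ as $i \to -\infty$ (or conversely, according as $\gamma'(\zeta)$ exceeds or falls below $1$), so $x^{\gamma'(\zeta)^i} \to \lambda_+$ as $i \to +\infty$ and $x^{\gamma'(\zeta)^i} \to \lambda_-$ as $i \to -\infty$, with $\{\lambda_+,\lambda_-\} = \{0,1\}$ and, crucially, $\lambda_+ \neq \lambda_-$ (this is precisely why $x \in (0,1)$ falls outside the Toeplitz case already handled for $x \in \{0,1\}$). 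Thus each diagonal entry converges as $i \to \pm\infty$, which is exactly the hypothesis needed for $\tau_x(b)$ to be a discrete operator, and its limit (discrete convolution) operators at $+\infty$ and $-\infty$ are the ones with symbols $p_{b,\lambda_+}$ and $p_{b,\lambda_-}$ of (\ref{ourpoly}).

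For the necessity of (\ref{inv1}) and (\ref{inv2}) I would argue as follows. Because the band width $N - M + 1$ is finite, the compressions of $\tau_x(b)$ to $\ell^2(\IZ_{\geq 0})$ and to $\ell^2(\IZ_{<0})$ are, up to a finite-rank perturbation, the Toeplitz operators $T_{p_{b,\lambda_+}}$ and a reflected Toeplitz operator with symbol $p_{b,\lambda_-}$, and invertibility of $\tau_x(b)$ forces both of these to be Fredholm; by the classical Coburn/Wiener--Hopf theory this is equivalent to $p_{b,\lambda_\pm}$ not vanishing on $\IT$, which is (\ref{inv1}). Tracking indices, one finds that the Fredholm index of $\tau_x(b)$ equals $\kappa_{b,\lambda_-} - \kappa_{b,\lambda_+}$ (up to an overall sign to be fixed in the write-up), so invertibility, which forces index $0$, yields $\kappa_{b,\lambda_+} = \kappa_{b,\lambda_-}$, i.e.\ (\ref{inv2}). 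All of this is packaged in \cite[Theorem 5.2]{KarlovichKravchenko:1983}, so in practice I would verify that the hypotheses of that theorem are met by the $\tau_x(b)$ above and invoke it.

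For the sufficiency direction, assume (\ref{inv1}) and (\ref{inv2}) and set $\kappa := \kappa_{b,\lambda_+} = \kappa_{b,\lambda_-}$. Passing from $\tau_x(b)$ to the column-shifted matrix $[\tau_x(b)_{i,j+\kappa}]$ — an invertible modification, since shifting is a power of the bilateral shift — produces a discrete operator whose two limit operators have winding number $0$, hence index $0$, so the shifted operator is Fredholm of index $0$ and is therefore invertible precisely when it is injective. Karlovich and Kravchenko detect this injectivity through the finite sections $\tau_x(b)^\nu$ and their punctured analogues $\tau_x(b)^\nu_\mu$ on the index sets $J_{\nu,\mu}$: once $\mu$ exceeds some $\mu_0$ (large enough that the finite-section entries on $\{i \leq -\mu\}$ and $\{i \geq \mu\}$ are already close to their limiting Toeplitz values), the ratio $\det \tau_x(b)^\nu / \det \tau_x(b)^\nu_\mu$ converges as $\nu \to \infty$, and its limit $\omega_{x,\mu}(b)$ is nonzero exactly when the (shifted) operator, and hence $\tau_x(b)$, is invertible; this is (\ref{inv3}).

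The main obstacle I anticipate is not a genuinely new argument but the bookkeeping needed to align conventions with \cite{KarlovichKravchenko:1983}: confirming the orientation of $\lambda_+$ versus $\lambda_-$, the sign of the winding number and of the resulting index formula, the exact normalization by $\kappa$ and the role of $\mu_0$, and the precise definition of the index sets $J_{\nu,\mu}$, and then checking that the two standing hypotheses of their discrete-operator framework — finitely many nonzero diagonals and convergence of diagonal entries at $\pm\infty$ — are exactly what the assumptions $\gamma'(\zeta) \neq 1$ and $x \in (0,1)$ provide. Once these matchups are in place, the theorem transfers directly to our $\tau_x(b)$.
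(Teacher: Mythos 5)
Your proposal matches the paper's treatment: Theorem \ref{invworkhorse} is quoted directly from \cite[Theorem 5.2]{KarlovichKravchenko:1983}, and the only verification the paper supplies is exactly the one you carry out --- that $[\tau_x(b)]$ is a bi-infinite, finitely banded matrix whose diagonal entries $f_n\bigl(x^{\gamma'(\zeta)^i}\bigr)$ converge to $f_n(\lambda_\pm)$ as $i \to \pm\infty$ for $x \in (0,1)$, so that it is a discrete operator with limit symbols $p_{b,\lambda_\pm}$ and the cited theorem applies. Your extra sketch of the internals of that theorem is consistent with the standard theory (with the minor correction that the half-line compressions differ from Toeplitz operators by compact, not finite-rank, perturbations), but the paper does not reprove any of it.
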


Note that, if (\ref{inv1}) and (\ref{inv2}) hold, then the existence of the finite limits $\omega_{x, \mu}(b)$ is guaranteed for sufficiently large $\mu$.

As we expected, condition (\ref{inv3}) is,  in general, difficult to check.  Moreover, to apply it to determine whether an element $b \in C^*(\IP_{\zeta}, U_{\gamma})/\mathcal{K}$ of form (\ref{formofb}) is invertible, we would need to verify (\ref{inv3}) for  infinitely many values of $x$.  
While we cannot fully determine the spectrum of such an element $b$, we obtain a partial spectral result  as a corollary of the preceding theorem.

\begin{corollary}\label{spectralincl} Suppose that $\gamma$ is an automorphism of $\ID$ that fixes $\zeta \in \IT$ and satisfies $\gamma^{\prime}(\zeta) \neq 1$ and that $b \in C^*(\IP_{\zeta}, U_{\gamma})/\mathcal{K}$ has form (\ref{formofb}).  Define the polynomials $p_{b, \lambda_\pm}$ by (\ref{ourpoly}).   Let $\mathcal{W}_b$ be the set of all points $z_0 \in \IC$ such that $p_{b, \lambda_{\pm}}(z)\neq z_0$ for all $z \in \IT$ and $p_{b, \lambda_+}$ and $p_{b, \lambda_-}$ have different winding numbers around $z_0$.  Then $$p_{b, +}(\IT) \cup p_{b, -}(\IT) \cup \mathcal{W}_b \subset \sigma(b).$$ \end{corollary}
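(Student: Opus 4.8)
The plan is to argue the contrapositive: I will show that any $z_0 \in \IC$ lying in $p_{b,+}(\IT) \cup p_{b,-}(\IT) \cup \mathcal{W}_b$ makes $b - z_0[I]$ non-invertible in $C^*(\IP_{\zeta}, U_{\gamma})/\mathcal{K}$, hence $z_0 \in \sigma(b)$. The first step is to notice that $b - z_0[I]$ again has the form (\ref{formofb}): enlarging the index set $\{M, \dots, N\}$ to contain $0$ if necessary and replacing the coefficient $f_0$ by the function $f_0 - z_0$, one writes $b - z_0[I] = \sum_{n=M}^N \Gamma_{\zeta}(g_n)[U_{\gamma_n}]$. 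The associated polynomials then satisfy $p_{b - z_0[I], \lambda}(z) = p_{b, \lambda}(z) - z_0$ for $\lambda \in \{0,1\}$, so that, whenever these polynomials are zero-free on $\IT$, the winding number of $p_{b - z_0[I], \lambda}$ about $0$ equals the winding number of $p_{b, \lambda}$ about $z_0$. This bookkeeping is the only slightly delicate point.

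Next I would recall that, as noted before Theorem \ref{invworkhorse}, $C^*(\IP_{\zeta}, U_{\gamma})/\mathcal{K}$ satisfies the hypotheses of Theorem \ref{trajectorial}, so $b - z_0[I]$ is invertible if and only if $\tau_x(b - z_0[I])$ is invertible on $\ell^2(\IZ)$ for every $x \in [0,1]$; and that at $x = \lambda_{\pm}$ the operator $\tau_{\lambda_{\pm}}(b - z_0[I])$ is unitarily equivalent to multiplication by $p_{b, \lambda_{\pm}} - z_0$ on $L^2(\IT)$, since $\tau_{\lambda_{\pm}}$ is a representation carrying $[I]$ to the identity and carrying the coefficients of a form-(\ref{formofb}) element into a Toeplitz matrix.

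Then I would split into two cases. If $z_0 \in p_{b,+}(\IT) \cup p_{b,-}(\IT)$ — which, since $\{\lambda_+, \lambda_-\} = \{0,1\}$, is the same as $z_0 \in p_{b,\lambda_+}(\IT) \cup p_{b,\lambda_-}(\IT)$ — then one of $p_{b,\lambda_+} - z_0$ or $p_{b,\lambda_-} - z_0$ vanishes on $\IT$, so the corresponding multiplication operator, and hence $\tau_{\lambda_{\pm}}(b - z_0[I])$, is not invertible; by Theorem \ref{trajectorial}, $b - z_0[I]$ is not invertible and $z_0 \in \sigma(b)$. If instead $z_0 \in \mathcal{W}_b$, then $p_{b,\lambda_{\pm}}(z) \neq z_0$ for all $z \in \IT$, so $b - z_0[I]$ satisfies condition (\ref{inv1}) of Theorem \ref{invworkhorse}; but by the winding-number identity above, $\kappa_{b - z_0[I], \lambda_+} \neq \kappa_{b - z_0[I], \lambda_-}$, so (\ref{inv2}) fails. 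Fixing any $x \in (0,1)$ and invoking only the necessity of conditions (\ref{inv1})--(\ref{inv2}) in Theorem \ref{invworkhorse}, $\tau_x(b - z_0[I])$ is not invertible, hence $b - z_0[I]$ is not invertible and $z_0 \in \sigma(b)$. Combining the two cases gives $p_{b,+}(\IT) \cup p_{b,-}(\IT) \cup \mathcal{W}_b \subseteq \sigma(b)$.

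I do not anticipate a real obstacle: the substance is carried by Theorem \ref{invworkhorse}, and what remains is the two routine invocations above, together with the care needed to keep $b - z_0[I]$ inside the class (\ref{formofb}) and to translate ``winding number of $p_{b,\lambda}$ about $z_0$'' into ``winding number of $p_{b - z_0[I], \lambda}$ about $0$.''
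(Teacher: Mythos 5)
Your proposal is correct and is exactly the argument the paper intends: the corollary is stated without proof as an immediate consequence of Theorem \ref{invworkhorse} and Theorem \ref{trajectorial}, and the natural reading is precisely your contrapositive applied to $b - z_0[I]$, using the observation that $b-z_0[I]$ is again of form (\ref{formofb}) with $p_{b-z_0[I],\lambda_{\pm}} = p_{b,\lambda_{\pm}} - z_0$, together with the Toeplitz/multiplication-operator description of $\tau_{\lambda_{\pm}}$ for the boundary points and the necessity of (\ref{inv1})--(\ref{inv2}) for interior points. No gaps.
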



\subsection{Spectral Results for Operators in $C^*(C_{\varphi}, \mathcal{K})$}

Let $\varphi$ be a linear-fractional, non-automorphism self-map of $\ID$ that fixes a point $\zeta \in \IT$ and satisfies $\varphi^{\prime}(\zeta) \neq 1.$  By (\ref{adjform})  and Theorem \ref{whatsin}, the coset of any finite product of linear-fractionally-induced composition operators in $C^*(C_{\varphi}, \mathcal{K})$ and their adjoints  is either the coset of the identity operator or has the form $[\alpha C_{\theta}]$, where $\alpha \in \IC$ and $\theta$ is a linear-fractional, non-automorphism self-map of $\ID$ that satisfies $\theta(\zeta)=\zeta$ and $\theta^{\prime}(\zeta)=\varphi^{\prime}(\zeta)^n$ for some $n \in \IZ$.  Thus, by (\ref{howtowritewithu}) and the proof of Lemma \ref{formoffullfixset}, the coset of every finite linear combination of these products can be written in the form \begin{equation}\label{sortofform}\sum_{n=M}^{N} \Gamma_{\zeta}(f_n) \left[U_{\Psi_{\zeta, t^n}}\right], \end{equation} where $M, N \in \IZ$, $t  \in (0, 1) \cup (1, \infty)$, $f_0 \in C([0,1])$, and $f_n \in C_0([0,1])$ for $n \neq 0.$  Recall that $\Psi_{\zeta, t^n} =(\Psi_{\zeta, t})_n$ for all $t >0$ and $n \in \IZ$.  

If $A \in \mathcal{B}(H^2(\ID))$ such that $[A]$ has form (\ref{sortofform}), then  $p_{[A], 0}\equiv f_0(0)$ since $f_n(0)=0$ for $n \neq 0$.  If $f_0(0) \neq 0$, then $\kappa_{[A], 0} = 0.$  Thus, by Corollary \ref{spectralincl}, the essential spectrum of $A$ contains $f_0(0)$, $p_{[A], 1}(\IT)$, and the set of all complex numbers $z_0$ such that winding number of $p_{[A], 1}$ around $z_0$ is defined and non-zero.  

The limit calculations required to fully determine the essential spectrum of $A$ are, in general, still very complicated.  For the remainder of this section, we restrict our attention to the case where $M=0$.  In this setting, the calculations simplify, and we can determine the essential spectrum of $A$.

\begin{theorem}\label{whenFredholm} Suppose $A$ is a bounded linear operator on $H^2(\ID)$ that satisfies $[A]=\sum_{n=0}^N \Gamma_{\zeta}(f_n)\left[U_{\Psi_{\zeta, t^n}}\right]$ for some $\zeta \in \IT$, $t \in (0,1) \cup (1, \infty),$ $N \in \IN \cup \{0\}$, $f_0 \in C([0,1])$, and $f_1, \ldots, f_n \in C_0([0,1]).$  Define the  polynomial $p_{[A], 1}(z):=\sum_{n=0}^Nf_n(1)z^n.$   Then $A$ is Fredholm if and only if $f_0$ does not vanish on $[0,1]$ and $p_{[A], 1}$ has no zeros in $\overline{\ID}$.  Moreover, $$\sigma_e(A)=f_0([0,1]) \cup p_{[A], 1}(\overline{\ID}).$$\end{theorem}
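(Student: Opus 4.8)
The plan is to reduce the Fredholmness of $A$ to the invertibility of $[A]$ in the Calkin algebra, then apply the trajectorial machinery (Theorem~\ref{trajectorial}) to the C$^*$-algebra $C^*(\IP_{\zeta}, U_{\Psi_{\zeta, t}})/\mathcal{K}$, in which $[A]$ lives as an element of the form~(\ref{formofb}) with $M=0$. The key observation that makes the case $M=0$ tractable is that for each $x \in [0,1]$ the matrix $[\tau_x(A)]$ is \emph{lower triangular} (only the diagonal and subdiagonals $n=0,\dots,N$ are nonzero, and one can arrange indexing so these sit below the diagonal), with diagonal entries $f_0\bigl(x^{\gamma'(\zeta)^i}\bigr)$ where $\gamma = \Psi_{\zeta,t}$ and $\gamma'(\zeta)=t$. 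First I would record that, by Theorem~\ref{trajectorial}, $[A]$ is invertible in $C^*(\IP_{\zeta}, U_{\Psi_{\zeta, t}})/\mathcal{K}$ if and only if $\tau_x(A)$ is invertible on $\ell^2(\IZ)$ for every $x \in [0,1]$; and since $\mathcal{K} \subset C^*(C_\varphi, \mathcal K)$, invertibility of $[A]$ in this subalgebra is the same as Fredholmness of $A$.

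Next I would analyze the operators $\tau_x(A)$ directly rather than through the determinant limits~(\ref{inv3}). The endpoints are the Toeplitz cases already discussed in the text: at $x=0$, $\tau_0(A)$ is multiplication by $p_{[A],0} \equiv f_0(0)$, invertible iff $f_0(0) \neq 0$; at $x=1$, $\tau_1(A)$ is multiplication by $p_{[A],1}$ on $L^2(\IT)$, invertible iff $p_{[A],1}$ has no zero on $\IT$. For $x \in (0,1)$, I would exploit the lower-triangular structure: $\tau_x(A)$ is a band operator whose diagonal entries are $f_0\bigl(x^{t^i}\bigr)$, and as $i \to +\infty$ or $i \to -\infty$ these tend to $f_0(\lambda_+)$ and $f_0(\lambda_-)$ respectively (one of $f_0(0), f_0(1)$ each). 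A lower-triangular band operator is invertible on $\ell^2(\IZ)$ iff it is bounded below and bounded above in the appropriate sense; for a band operator with asymptotically constant diagonals the cleanest route is: it is Fredholm (indeed invertible, once indices vanish) iff all diagonal entries are nonzero \emph{and} the two limiting Toeplitz operators at $\pm\infty$ are invertible. Concretely, invertibility of $\tau_x(A)$ for $x\in(0,1)$ forces $f_0\bigl(x^{t^i}\bigr)\neq 0$ for all $i$, and letting $x$ range over $(0,1)$ and $i$ over $\IZ$, the set $\{x^{t^i}\}$ sweeps out all of $(0,1)$, so nonvanishing of $\tau_x(A)$ for every $x\in(0,1)$ is equivalent to $f_0$ nonvanishing on $(0,1)$. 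Combining with the endpoint conditions and the fact that the limiting Toeplitz symbols at $x\in(0,1)$ are exactly $p_{[A],\lambda_\pm}$ — and $p_{[A],0}\equiv f_0(0)$ is nonvanishing on $\IT$ automatically once $f_0(0)\neq 0$, with winding number $0$, so the winding-number matching~(\ref{inv2}) forces $p_{[A],1}$ to have winding number $0$, i.e.\ no zeros in $\ID$ — gives: $A$ Fredholm $\iff$ $f_0$ nonvanishing on $[0,1]$ and $p_{[A],1}$ nonvanishing on $\overline{\ID}$.

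For the essential spectrum, I would apply the Fredholm criterion to $A - z_0 I$ for each $z_0 \in \IC$. Subtracting $z_0$ changes $f_0$ to $f_0 - z_0$ and $p_{[A],1}(z)$ to $p_{[A],1}(z) - z_0$ (the higher coefficients $f_n(1)$, $n\geq 1$, are unchanged, and the constant term shifts by $z_0$ since $p_{[A]-z_0I,1} = p_{[A],1} - z_0$). Hence $z_0 \in \sigma_e(A)$ iff either $f_0 - z_0$ vanishes somewhere on $[0,1]$, i.e.\ $z_0 \in f_0([0,1])$, or $p_{[A],1} - z_0$ has a zero in $\overline{\ID}$, i.e.\ $z_0 \in p_{[A],1}(\overline{\ID})$. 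This yields $\sigma_e(A) = f_0([0,1]) \cup p_{[A],1}(\overline{\ID})$.

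The main obstacle I anticipate is the rigorous treatment of the band operators $\tau_x(A)$ for $x \in (0,1)$: one must justify that invertibility on $\ell^2(\IZ)$ is governed precisely by (i) nonvanishing of all diagonal entries and (ii) invertibility of the two asymptotic Toeplitz operators with the correct index bookkeeping, rather than hiding in the opaque determinant limits $\omega_{x,\mu}$ of Theorem~\ref{invworkhorse}. In the lower-triangular case this should follow from a direct argument — the inverse of a boundedly-invertible lower-triangular band operator is again lower triangular and can be computed by back-substitution, with norm control coming from the uniform bound $|f_0\bigl(x^{t^i}\bigr)| \geq c > 0$ and the fact that the off-diagonal band has fixed width $N$ — but one should check uniformity of the lower bound as $x$ varies and handle the passage to the limiting symbols carefully (e.g.\ via a compact perturbation argument comparing $\tau_x(A)$ to a block-diagonal sum of two Toeplitz operators plus a finite-rank piece). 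Alternatively, one could feed the triangular structure into Theorem~\ref{invworkhorse} and observe that for lower-triangular $b$ the finite matrices $\tau_x(b)^\nu$ are themselves triangular, so $\det \tau_x(b)^\nu$ is a product of diagonal entries and the limit~(\ref{inv3}) simplifies explicitly; this is probably the cleanest way to stay within the framework already set up in the paper.
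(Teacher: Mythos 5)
Your proposal arrives at the correct conditions, and its closing ``alternative'' is precisely the paper's proof: the paper feeds the triangular structure of $[\tau_x([A])]$ (for $M=0$ and $\kappa_{[A],1}=0$ it is upper triangular in the paper's indexing) into Theorem \ref{invworkhorse}, so that $\det \tau_x([A])^{\nu}$ and $\det \tau_x([A])^{\nu}_{\mu}$ are products of the diagonal entries $f_0\left(x^{t^j}\right)$ and the limit $\omega_{x,\mu}([A])$ collapses to $\prod_{j=-\mu+1}^{\mu-1} f_0\left(x^{t^j}\right)$; nonvanishing of these for all $x\in(0,1)$ is then equivalent to $f_0$ nonvanishing on $(0,1)$, and the endpoint and winding-number bookkeeping (constant symbol $p_{[A],0}\equiv f_0(0)$ forcing $\kappa_{[A],1}=0$, hence no zeros of $p_{[A],1}$ in $\overline{\ID}$) is exactly as you describe, as is the essential-spectrum computation via $A-z_0I$. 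Your \emph{primary} route --- treating $\tau_x(A)$ as a band operator with asymptotically constant diagonals and asserting that it is invertible iff all diagonal entries are nonzero and the two limiting Laurent operators are invertible --- is genuinely different and would decouple the argument from the Karlovich--Kravchenko determinant machinery, but as stated it is the weak point: limit-operator arguments of that kind give Fredholmness, not invertibility, and the passage from ``Fredholm with nonvanishing diagonal'' to ``invertible'' for a triangular band operator is exactly what the determinant limits $\omega_{x,\mu}$ are engineered to control. Since you flag this obstacle yourself and correctly identify the in-framework determinant computation as the clean resolution, the proposal is sound; I would simply promote that alternative to the main line of the argument.
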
  

\begin{proof}
By Theorem \ref{trajectorial}, $A$ is Fredholm if and only if $\tau_{x}([A])$ is invertible for all $x \in [0,1]$.  Thus, by Theorem \ref{invworkhorse} and the discussion preceding it, $A$ is Fredholm if and only $p_{[A],0} \equiv f_0(0)$ and $p_{[A], 1}$ do not vanish on $\IT$, $\kappa_{[A], 1}=\kappa_{[A], 0} = 0$, and, for all $x \in (0,1)$, there exists $\mu_{0,x} >0$ such that $\omega_{x, \mu}([A]) \neq 0$ for all $\mu >\mu_{0, x}.$  Since $p_{[A], 1}$ is a polynomial, the conditions that $p_{[A], 1}$ does not vanish on $\IT$ and $\kappa_{[A], 1}=0$ are equivalent to the single condition that $p_{[A], 1}$ has no zeros in $\overline{\ID}$.  

Let $x \in (0,1)$ and $\nu, \mu \in \IN$ with $\nu > \mu$.  If $\kappa_{[A], 1} = 0$, then $\tau_{x}([A])^{\nu}$ is a $(2\nu +1) \times (2\nu +1)$ upper triangular matrix 
having the entries $f_0\left(x^{t^{-\nu}}\right)$, $f_0\left(x^{t^{-\nu+1}}\right)$, $\ldots$, 
$f_0\left(x^{t^{\nu}}\right)$ along its main diagonal. 
 Similarly, $\tau_{x}([A])_{\mu}^{\nu}$ is a $2(\nu - \mu + 1) \times 2(\nu - \mu +1)$ upper triangular matrix with the entries 
 $f_0\left(x^{t^{-\nu}}\right)$, $f_0\left(x^{t^{-\nu+1}}\right)$, $\ldots$, $f_0\left(x^{t^{-\mu}}\right)$, $f_0\left(x^{t^{\mu}}\right)$, $f_0\left(x^{t^{\mu+1}}\right)$, \ldots $f_0\left(x^{t^{\nu}}\right)$ along the  main diagonal.  
 Hence, for all $x \in (0,1)$ and $\mu \in \IN$,  \begin{align*} \omega_{x, \mu}([A]) = \lim_{\nu \rightarrow \infty} \frac{ \prod_{j=-\nu}^{\nu} f_0\left(x^{t^j}\right)}{\prod_{j=J_{\nu, \mu}} f_0\left(x^{t^j}\right)}= \prod_{j=-\mu+1}^{\mu -1} f_0\left(x^{t^j}\right). \end{align*}  
 Thus, there exists $\mu_{0,x} >0$ such that $\omega_{x, \mu} \neq 0$ for all $\mu > \mu_{0, x}$ if and only if $f_0$ does not vanish on $\left\{x^{t^j}: j \in \IZ\right\}$.
 
 Therefore, $A$ is Fredholm if and only if $f_0(0) \neq 0$, $p_{[A], 1}$ has no zeros in $\ID$, and $f_0$ does not vanish on $(0,1)$.  Since $p_{[A], 1}(0)=f_0(1),$ we obtain the stated result.\end{proof}

We conclude this section with a few examples of operators whose essential spectra  can be calculated by applying Theorem \ref{whenFredholm}.

\begin{example}  Let $n \in \IN$ and suppose that $\varphi_1, \ldots, \varphi_n$ are linear-fractional, non-automorphism self-maps of $\ID$ that fix a point $\zeta \in \IT$ and satisfy $\varphi_1^{\prime}(\zeta) = \ldots = \varphi_n^{\prime}(\zeta)  = s \neq 1.$  By (\ref{howtowritewithu}) and  Lemma \ref{formoffullfixset}, there exist $a_1, \ldots, a_n \in \Upsilon$ such that $[C_{\varphi_j}]=\Gamma_{\zeta} \left(s^{-1/2}x^{a_j} \right)\left[U_{\Psi_{\zeta, s}}\right]$ for all $1 \leq j \leq n.$

If $c_1, \ldots, c_n  \in \IC$, set $\displaystyle{A := \sum_{j=1}^nc_jC_{\varphi_j}}$.  Then $$\left[A \right] = \Gamma_{\zeta}(0)\left[U_{\Psi_{\zeta, s^0}}\right] + \Gamma_{\zeta}\left(s^{-1/2}\sum_{j=1}^n c_jx^{a_j}\right)\left[U_{\Psi_{\zeta, s^1}}\right],$$  so $f_0 \equiv 0$ and $\displaystyle{p_{[A], 1}(z)=\left(s^{-1/2}\sum_{j=1}^nc_j \right)z}$.  Therefore, by Theorem \ref{whenFredholm}, \begin{align*}\sigma_e\left(\sum_{j=1}^nc_jC_{\varphi_j}  \right) = \left\{\lambda \in \IC : |\lambda| \leq s^{-1/2}\left|\sum_{j=1}^n c_j \right|\right\}.\end{align*}  Note that the essential spectral radius of $A$ is equal to the lower bound (\ref{datalowerbound}) on the essential norm of $A$  from \cite{KrieteMoorhouse:2007}.  As a special case of this example, we see that \begin{align*} \sigma_{e}(C_{\varphi_1})=\left\{\lambda \in \IC : |\lambda| \leq (\varphi_1^{\prime}(\zeta))^{-1/2} \right\}. \end{align*} 

\end{example}

\begin{example} Let $\zeta \in \IT$, $a \in \Upsilon,$ and $c_1, c_2 \in \IC.$  Suppose that $\varphi$ is a linear-fractional, non-automorphism self-map of $\ID$ that fixes $\zeta$ and satisfies $\varphi^{\prime}(\zeta) \neq 1.$  Then, by (\ref{howtowritewithu}) and  Lemma \ref{formoffullfixset}, there exist $b \in \Upsilon$ such that 
\begin{align*}[c_1C_{\rho_{\zeta, a}} + c_2C_{\varphi}]=\Gamma_{\zeta}(c_1x^a)\left[U_{\Psi_{\zeta, \varphi^{\prime}(\zeta)^0}}\right] + \Gamma_{\zeta}\left(c_2(\varphi^{\prime}(\zeta))^{-1/2}x^b\right)\left[U_{\Psi_{\zeta, \varphi^{\prime}(\zeta)^1}}\right].\end{align*}Thus, $f_0(x)=c_1x^a$ and $p_{[c_1C_{\rho_{\zeta, a}} + c_2C_{\varphi}], 1}(z)=c_1+\left(c_2(\varphi^{\prime}(\zeta))^{-1/2}\right)z.$  Hence, by Theorem \ref{whenFredholm}, $$\sigma_e(c_1C_{\rho_{\zeta, a}} + c_2C_{\varphi}) = \left\{c_1x^a : x \in [0,1]\right\} \cup \left\{\lambda \in \IC : |\lambda-c_1| \leq |c_2|(\varphi^{\prime}(\zeta))^{-1/2}\right\},$$ the union of a spiral and a closed disk. \end{example}

\section{The Structure of $C^*(T_z, C_{\varphi})/\mathcal{K}$}\label{includeshift}

As we noted in Section \ref{intro}, much of the previous work on  C$^*$-algebras generated by  composition operators has considered C$^*$-algebras that  include the unilateral shift $T_z$ as a generator.  The following two theorems are important results of this type. The inclusion of the unilateral shift as an element of the C$^*$-algebra plays an important role in the proofs of both theorems. 

\begin{theorem}{\rm \cite[Theorem 2.1]{Jury:2007F}}\label{jurymain}  Suppose $\varphi$ is an automorphism of $\ID$.

\begin{enumerate}
\item 
If $\varphi$ has finite order $q$, i.e.\ $\varphi_q(z) \equiv z$ for a positive integer $q$ and $q$ is the smallest positive integer with this property, then $C^*(T_z, C_{\varphi})/\mathcal{K}$ is isometrically $*$-isomorphic to $C(\IT) \rtimes_{\alpha^{\varphi}} \IZ/q\IZ.$
\item Otherwise, $C^*(T_z, C_{\varphi})/\mathcal{K}$ is isometrically $*$-isomorphic to $C(\IT) \rtimes_{\alpha^{\varphi}} \IZ.$  \end{enumerate}  The action $\alpha^{\varphi}$ of $\IZ/q\IZ$, respectively $\IZ$, on $C(\IT)$ is defined by $\alpha^{\varphi}_n(f)=f\circ \varphi_n$ for all $f \in C(\IT).$
\end{theorem}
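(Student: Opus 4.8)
The plan is to run the argument of Theorem~\ref{p1u} with $C^*(T_z)$ and Coburn's theorem $C^*(T_z)/\mathcal{K}\cong C(\IT)$ in the roles of $C^*(\IP_\zeta)$ and Theorem~\ref{paraboliciso}, and with the unitary $U_\varphi$ again replacing $C_\varphi$. First I would record that, since $\varphi$ is an automorphism, $C_\varphi$ is invertible with $C_\varphi^{-1}=C_{\varphi^{-1}}$; by Theorem~\ref{unitaryrel}(1), $U_\varphi=T_{w_\varphi}C_\varphi+K$ with $w_\varphi\in C(\IT)$ bounded away from $0$, so $T_{1/w_\varphi}\in C^*(T_z)$ and $C^*(T_z,C_\varphi)=C^*(T_z,U_\varphi)\supseteq\mathcal{K}$. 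Passing to the Calkin quotient, $C^*(T_z,C_\varphi)/\mathcal{K}$ is the $C^*$-algebra generated by $\mathcal{A}:=C^*(T_z)/\mathcal{K}\cong C(\IT)$ together with $\{[U_{\varphi_n}]:n\in\IZ\}$; by Theorem~\ref{unitaryrel}(2) the assignment $\varphi_n\mapsto[U_{\varphi_n}]$ is a group homomorphism into the unitary group of the Calkin algebra (with $[U_{\varphi_0}]=[I]$), whose image $G$ is a quotient of $\IZ$ — equal to $\IZ$ when $\varphi$ has infinite order and to $\IZ/q\IZ$ when $\varphi$ has order $q$, since in those cases $\varphi_n\neq\id$ for the relevant values of $n$.

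The analytic core is the covariance relation. The identity $C_\varphi T_g=T_{g\circ\varphi}C_\varphi$ for $g\in H^{\infty}$ — in particular $C_\varphi T_zC_{\varphi^{-1}}=T_\varphi$, the analytic Toeplitz operator with symbol $\varphi$ — is an elementary computation. Because $|z|=|\varphi|=1$ on $\IT$, the cosets $[T_z]$ and $[T_\varphi]$ are unitary in the Calkin algebra with inverses $[T_{\bar z}]$ and $[T_{\bar\varphi}]$, so conjugation by the (invertible, but not unitary) coset $[C_\varphi]$ — a bounded algebra automorphism of the Calkin algebra — sends $[T_z]\mapsto[T_{z\circ\varphi}]$ and $[T_{\bar z}]\mapsto[T_{\bar z\circ\varphi}]$, where $h\circ\varphi$ denotes composition with the boundary homeomorphism $\varphi|_{\IT}$. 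Laurent polynomials in $[T_z]$ are dense in $\mathcal{A}$, so by continuity $[C_\varphi][T_f][C_\varphi]^{-1}=[T_{f\circ\varphi}]$ for every $f\in C(\IT)$. Substituting $[U_\varphi]=[T_{w_\varphi}][C_\varphi]$ and using commutativity of $\mathcal{A}$ to cancel the $[T_{w_\varphi}]$ factors then gives
\[ [U_\varphi][T_f][U_\varphi]^*=[T_{f\circ\varphi}],\qquad f\in C(\IT), \]
which under the Coburn identification $\mathcal{A}\cong C(\IT)$ is exactly the covariance condition for the action $\alpha^\varphi$; in particular (\ref{actioncond}) holds and the induced action $\hat W$ on $\mathcal{A}$ corresponds to $\alpha^\varphi$.

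It remains to verify topological freedom and assemble the pieces. Each $\varphi_n$ with $[U_{\varphi_n}]\neq[I]$ is a M\"obius transformation other than the identity, hence fixes at most two points of $\widehat{\IC}$; thus for any finite $G_0\subseteq G$ only finitely many $z\in\IT$ are fixed by any such $\varphi_n$, so $G$ acts topologically freely on $\mathcal{A}\cong C(\IT)$ via the automorphisms $\hat W$. Since $G$ ($=\IZ$ or $\IZ/q\IZ$) is discrete and amenable, Theorem~\ref{isothmcross} applies and yields $C^*(T_z,C_\varphi)/\mathcal{K}=C^*(\mathcal{A},\{[U_{\varphi_n}]\})\cong\mathcal{A}\rtimes_{\hat W}G$; transporting along the Coburn isomorphism, which intertwines $\hat W$ with $\alpha^\varphi$, by \cite[Lemma 2.65]{Williams:2007} identifies this with $C(\IT)\rtimes_{\alpha^\varphi}\IZ/q\IZ$ in the finite-order case and with $C(\IT)\rtimes_{\alpha^\varphi}\IZ$ otherwise, as claimed.

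I expect the main obstacle to be the covariance computation of the second paragraph — keeping careful track of the distinction between $C_\varphi^{-1}$ and $C_\varphi^*$ (conjugation by $[C_\varphi]$ is only an algebra automorphism of the Calkin algebra, not a $*$-automorphism) and transferring the resulting relation from $[C_\varphi]$ to the unitary $[U_\varphi]$; once that is in hand, the remainder is bookkeeping around Theorem~\ref{isothmcross}, Theorem~\ref{unitaryrel}, and Coburn's theorem. A minor foundational point is that, to apply Theorem~\ref{isothmcross} verbatim, one first represents $C^*(T_z,C_\varphi)/\mathcal{K}$ faithfully on a Hilbert space so that $\mathcal{A}$ and the $[U_{\varphi_n}]$ become honest operators.
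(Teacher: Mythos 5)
The paper does not prove this statement: Theorem~\ref{jurymain} is quoted verbatim from Jury \cite{Jury:2007F} and used as a black box, so there is no in-paper proof to compare against. Your reconstruction is correct and is essentially the standard argument, which is also the template the paper itself follows for Theorem~\ref{p1u}: establish the covariance relation $[U_{\varphi}][T_f][U_{\varphi}]^*=[T_{f\circ\varphi}]$, check topological freeness (a non-identity M\"obius map fixes at most two points of $\IT$), and invoke the crossed-product isomorphism theorem (Theorem~\ref{isothmcross}) together with Coburn's identification $C^*(T_z)/\mathcal{K}\cong C(\IT)$ and \cite[Lemma 2.65]{Williams:2007}. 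Your handling of the two delicate points is sound: passing from the exact intertwining $C_{\varphi}T_gC_{\varphi}^{-1}=T_{g\circ\varphi}$ for analytic $g$ to all of $C(\IT)$ via inverses of the Calkin-unitaries $[T_z]$, $[T_{\varphi}]$ and density of Laurent polynomials, and then trading the non-unitary $[C_{\varphi}]$ for $[U_{\varphi}]$ using commutativity of $C^*(T_z)/\mathcal{K}$. Two minor remarks: Theorem~\ref{unitaryrel}(2) as stated gives $[U_{\gamma}][U_{\gamma_1}]=[U_{\gamma_1\circ\gamma}]$, i.e.\ an \emph{anti}-homomorphism with respect to composition, which is harmless here since the group generated by a single $\varphi$ is abelian; and in the finite-order case one should note that $[U_{\varphi_q}]=[U_{\id}]=[I]$ so that $n\mapsto[U_{\varphi_n}]$ genuinely descends to a unitary representation of $\IZ/q\IZ$ --- which your setup supplies. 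The foundational caveat you raise (representing the Calkin quotient faithfully before applying Theorem~\ref{isothmcross}) is exactly the convention the paper adopts silently in its own proof of Theorem~\ref{p1u}.
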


\begin{theorem}{\rm\cite[Theorem 4.12]{KrieteMacCluerMoorhouse:2007}}\label{distinctpointsmain} Suppose $\varphi$ is a linear-fractional self-map of $\ID$ that is not an automorphism of $\ID$ and satisfies $\varphi(\zeta)=\eta$ for distinct points $\zeta, \eta \in \IT$. Let $\mathcal{D}$ be the C$\, ^*$-subalgebra of $C(\IT) \oplus \mathbb{M}_{2}(C([0,1]))$ defined by $$\mathcal{D}=\left\{(w,V) \in C(\IT) \oplus \mathbb{M}_{2}(C([0,1]))  : V(0) = \left[\begin{array}{cc} w(\zeta) & 0 \\ 0 & w(\eta) \end{array} \right]\right\}.$$  Then  $C^*(T_z, C_{\varphi})/\mathcal{K}$ is isometrically $*$-isomorphic to  $\mathcal{D}.$  \end{theorem}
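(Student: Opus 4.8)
The plan is to realize $C^*(T_z,C_\varphi)/\mathcal K$ as an extension of $C(\IT)$ by an algebra of $M_2$-valued functions on $[0,1]$ vanishing at $0$, and to check that the gluing data of that extension is exactly what defines $\mathcal D$. Since $\mathcal K\subseteq C^*(T_z)$, we have $\mathcal K\subseteq C^*(T_z,C_\varphi)$, so it suffices to work in the Calkin algebra. First I would collect the relations valid modulo $\mathcal K$. Let $\sigma=\sigma_\varphi$ be the Krein adjoint, so $\sigma(\eta)=\zeta$ and, by (\ref{adjform}), $[C_\varphi]^*=|\varphi'(\zeta)|^{-1}[C_\sigma]$. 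By Theorem~\ref{parabolicandourmaps}(1), $\varphi\circ\sigma=\rho_{\eta,2b}$ and $\sigma\circ\varphi=\rho_{\zeta,2c}$ for unique positive $b,c$, so, writing $\mathcal A_\zeta:=\Gamma_\zeta(C_0([0,1]))$ and $\mathcal A_\eta:=\Gamma_\eta(C_0([0,1]))$ for the non-unital copies of $C_0([0,1])$ supplied by Theorem~\ref{paraboliciso},
\[
 [C_\varphi][C_\varphi]^*=|\varphi'(\zeta)|^{-1}\Gamma_\zeta\!\left(x^{2c}\right)\in\mathcal A_\zeta,\qquad [C_\varphi]^*[C_\varphi]=|\varphi'(\zeta)|^{-1}\Gamma_\eta\!\left(x^{2b}\right)\in\mathcal A_\eta.
\]
A composition of a parabolic non-automorphism fixing $\zeta$ with one fixing $\eta\neq\zeta$ is a self-map of $\ID$ of sup-norm strictly less than $1$, hence induces a compact operator, so $\mathcal A_\zeta\mathcal A_\eta=\mathcal A_\eta\mathcal A_\zeta=0$. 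Theorem~\ref{parabolicandourmaps}(2) gives $[C_\varphi]\,\Gamma_\eta(x^a)=\Gamma_\zeta\!\left(x^{|\varphi'(\zeta)|a}\right)[C_\varphi]$, so the unitary $[U_\varphi]$ from the polar decomposition of $[C_\varphi]$ conjugates $\mathcal A_\eta$ onto $\mathcal A_\zeta$ through the homeomorphism $x\mapsto x^{|\varphi'(\zeta)|}$ of $[0,1]$, while $|[C_\varphi]|$ is a full positive element of $\mathcal A_\eta$. Finally, the linear-fractional version of Theorem~\ref{comprel} for non-automorphisms carrying $\zeta$ to a distinct point $\eta$ gives $[T_w][C_\varphi]=w(\zeta)[C_\varphi]$ and $[C_\varphi][T_w]=w(\eta)[C_\varphi]$ for all $w\in C(\IT)$.

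Next I would analyze the closed ideal $\mathcal J$ of $C^*(T_z,C_\varphi)/\mathcal K$ generated by $[C_\varphi]$. Using (\ref{adjform}) and the Toeplitz relations above to absorb every $[T_z]$-factor into a scalar, and using that a product of composition operators induced by linear-fractional non-automorphisms is again such a composition operator (which becomes compact precisely when its symbol loses all contact with $\IT$), one checks that $\mathcal J$ is the closed linear span of the cosets $[C_\psi]$, $\psi$ ranging over the non-identity linear-fractional non-automorphisms of $\ID$ with $\|\psi\|_\infty=1$ and $F(\psi)\subseteq\{\zeta,\eta\}$. The relations then present $\mathcal J$ as the linking algebra of the two orthogonal corners $\mathcal A_\zeta,\mathcal A_\eta$ — both identified with $C_0([0,1])$ via $\Gamma_\zeta,\Gamma_\eta$ and the reparametrization $x\mapsto x^{|\varphi'(\zeta)|}$ — together with the off-diagonal generator $[C_\varphi]$. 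Consequently there is a $*$-isomorphism $\mathcal J\cong M_2(C_0([0,1]))$ carrying $\mathcal A_\zeta$ onto the $(1,1)$-corner, $\mathcal A_\eta$ onto the $(2,2)$-corner, and $[C_\varphi]$ onto $\left(\begin{smallmatrix}0 & p\\ 0 & 0\end{smallmatrix}\right)$ for some $p\in C_0([0,1])$ nowhere zero on $(0,1]$; in particular $\mathcal J$ is non-unital, so $[I]\notin\mathcal J$.

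I would then identify the quotient. First, $\mathcal J\cap\bigl(C^*(T_z)/\mathcal K\bigr)=0$: if $[T_w]\in\mathcal J$, approximate it in essential norm by finite sums $\sum_n c_n[C_{\psi_n}]$ of the generating cosets above and test against the normalized reproducing kernels $k_\alpha/\|k_\alpha\|$ as $\alpha\to\beta$ for a fixed $\beta\in\IT\setminus\{\zeta,\eta\}$; since $\psi_n(\beta)\in\ID$ for every $n$, one has $\langle C_{\psi_n}k_\alpha,k_\alpha\rangle/\|k_\alpha\|^2\to0$, whereas $\langle T_wk_\alpha,k_\alpha\rangle/\|k_\alpha\|^2\to w(\beta)$, forcing $w(\beta)=0$, hence $w\equiv0$. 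Since $C^*(T_z)/\mathcal K\cong C(\IT)$ by Coburn's theorem and the quotient $\bigl(C^*(T_z,C_\varphi)/\mathcal K\bigr)/\mathcal J$ is generated by the image of $[T_z]$, this shows that the composite $C^*(T_z)/\mathcal K\hookrightarrow C^*(T_z,C_\varphi)/\mathcal K\to\bigl(C^*(T_z,C_\varphi)/\mathcal K\bigr)/\mathcal J$ is an isomorphism; writing $\Phi_1\colon C^*(T_z,C_\varphi)/\mathcal K\to C(\IT)$ for the induced quotient map, $\Phi_1([T_z])=z$ and $\ker\Phi_1=\mathcal J$. On the other side, since $M_2(C_0([0,1]))$ is an essential ideal of $M_2(C([0,1]))$, the canonical $*$-homomorphism $C^*(T_z,C_\varphi)/\mathcal K\to M(\mathcal J)$ has image, after the identification $\mathcal J\cong M_2(C_0([0,1]))$, inside $M_2(C([0,1]))\subseteq M(\mathcal J)$; call this map $\Phi_2$. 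By the Toeplitz relations $\Phi_2([T_w])=\mathrm{diag}(w(\zeta),w(\eta))$ (a constant function), $\Phi_2([C_\varphi])=\left(\begin{smallmatrix}0 & p\\ 0 & 0\end{smallmatrix}\right)$, and $\Phi_2$ restricts on $\mathcal J$ to the inclusion $M_2(C_0([0,1]))\hookrightarrow M_2(C([0,1]))$.

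It then remains to assemble $\Phi:=(\Phi_1,\Phi_2)\colon C^*(T_z,C_\varphi)/\mathcal K\to C(\IT)\oplus M_2(C([0,1]))$. On the generators $[T_w]$ and $[C_\varphi]$ one checks the compatibility $\Phi_2(\cdot)(0)=\mathrm{diag}\bigl(\Phi_1(\cdot)(\zeta),\Phi_1(\cdot)(\eta)\bigr)$, so $\Phi$ takes values in $\mathcal D$; because $\zeta\neq\eta$, the elements $\Phi([T_w])$ and $\Phi([C_\varphi])$ generate $\mathcal D$, so $\Phi$ is onto; and $\ker\Phi=\ker\Phi_1\cap\ker\Phi_2=\mathcal J\cap\ker\Phi_2=\ker(\Phi_2|_{\mathcal J})=0$, so $\Phi$ is injective, hence isometric, and $C^*(T_z,C_\varphi)/\mathcal K\cong\mathcal D$. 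The step I expect to be genuinely delicate is the structural analysis of $\mathcal J$: pinning it down as $M_2(C_0([0,1]))$ \emph{with the correct bookkeeping} — which corner is attached to $\zeta$ and which to $\eta$, and the exact reparametrization by $\varphi'(\zeta)$ — since the theorem asserts equality with $\mathcal D$ on the nose, not merely an isomorphism with a similarly-described algebra. This in turn rests on the linear-fractional Toeplitz--composition relations at distinct boundary points (the analogue of Theorem~\ref{comprel}), which is the one essentially analytic input not already recorded in Section~\ref{prelim} and which would need to be established first.
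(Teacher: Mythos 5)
The first thing to note is that the paper does not prove this statement at all: it is imported verbatim, with attribution, as Theorem 4.12 of \cite{KrieteMacCluerMoorhouse:2007}, and is used only to complete the classification in Table \ref{structurecharacterization}. So there is no in-paper proof to compare against; the relevant comparison is with the cited source, and your outline is in fact essentially the argument given there: realize $C^*(T_z,C_\varphi)/\mathcal{K}$ as an extension of $C(\IT)$ by the ideal $\mathcal{J}$ generated by $[C_\varphi]$, identify $\mathcal{J}$ with $\mathbb{M}_2(C_0([0,1]))$ via the two orthogonal parabolic corners $\Gamma_\zeta(C_0([0,1]))$ and $\Gamma_\eta(C_0([0,1]))$ linked by $[C_\varphi]$, and read off the gluing at $0$ from the relations $[T_w][C_\varphi]=w(\zeta)[C_\varphi]$ and $[C_\varphi][T_w]=w(\eta)[C_\varphi]$. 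Your individual computations are correct, including the orthogonality $\mathcal{A}_\zeta\mathcal{A}_\eta=0$ (a composition of parabolic non-automorphisms with distinct boundary fixed points has sup-norm strictly less than $1$) and the Berezin-transform argument showing $\mathcal{J}\cap\bigl(C^*(T_z)/\mathcal{K}\bigr)=0$.

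Two steps remain sketches rather than proofs, and both are where the real work of the cited proof lives. First, the Toeplitz--composition relation at distinct boundary points is asserted but not established; the paper only records the fixed-point case (Theorem \ref{comprel}), so the $\varphi(\zeta)=\eta$ version must be proved separately (it follows from (\ref{adjform}) together with the fixed-point case applied to $\sigma\circ\varphi$ and $\varphi\circ\sigma$, or directly as in the source). You flag this correctly. Second, the identification $\mathcal{J}\cong\mathbb{M}_2(C_0([0,1]))$ requires (a) verifying that every element of the two-sided ideal generated by $[C_\varphi]$ reduces to the closed span of $\mathcal{A}_\zeta+\mathcal{A}_\eta+\mathcal{A}_\zeta[C_\varphi]+[C_\varphi]^*\mathcal{A}_\zeta$, which rests on collapsing arbitrary words in $[C_\varphi]$, $[C_\varphi]^*$, and $[T_w]$ to scalar multiples of single composition-operator cosets, and (b) checking that the two copies of $C_0([0,1])$ are matched compatibly, i.e.\ that $[C_\varphi][C_\varphi]^*=|\varphi'(\zeta)|^{-1}\Gamma_\zeta(x^{2c})$ and $[C_\varphi]^*[C_\varphi]=|\varphi'(\zeta)|^{-1}\Gamma_\eta(x^{2b})$ become the same function under the reparametrization $x\mapsto x^{|\varphi'(\zeta)|}$; this is exactly where the intertwining $\rho_{\eta,a}\circ\varphi=\varphi\circ\rho_{\zeta,|\varphi'(\zeta)|a}$ from Theorem \ref{parabolicandourmaps} and the relation between $b$ and $c$ must be used. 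Neither point is a wrong idea --- the strategy is sound and yields $\mathcal{D}$ on the nose once the bookkeeping is done --- but as written the proposal is a correct plan rather than a complete proof.
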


In the setting of this paper, we did not need to include the unilateral shift to determine the structures of the C$^*$-algebras generated by compact operators and composition operators.  The exclusion of the unilateral shift allows us to focus on the roles of the composition operators in determining the structures of the resulting C$^*$-algebras, but it  obscures the connections between the results in the preceding sections and those in Theorems \ref{jurymain}  and \ref{distinctpointsmain}.  To clarify these relationships, we now include the unilateral shift as a generator and determine the structure of $C^*(T_z, C_{\varphi})/\mathcal{K},$ where $\varphi$ is a linear-fractional, non-automorphism self-map of $\ID$ that fixes a point $\zeta \in \IT$.  When combined with Theorems  \ref{jurymain} and \ref{distinctpointsmain}, these results provide a full characterization of the structures, modulo the ideal of compact operators, of the C$^*$-algebras generated by $T_z$ and a single linear-fractionally-induced composition operator.  Our methods in this section are similar to those used by Kriete, MacCluer, and Moorhouse in \cite{KrieteMacCluerMoorhouse:2007} and \cite{KrieteMacCluerMoorhouse:2009}.

For $t>0$ and $\zeta \in \IT$, we set \begin{align*} N_{\zeta, t} = \left\{\Gamma_{\zeta}(g)\left[U_{\Psi_{\zeta, t^n}}\right] : g \in C_0([0,1]), n \in \IZ\right\}.\end{align*}  Let $\mathcal{A}_{\zeta, t}$ be the non-unital C$^*$-algebra generated by $N_{\zeta, t}$.  By Theorems  \ref{paraboliciso} and \ref{p1usub}, $\mathcal{A}_{\zeta, 1} \cong C_0([0,1])$ and  $\mathcal{A}_{\zeta, t} \cong C_0([0,1]) \rtimes \IZ$ for $t \neq 1.$

If $\varphi$ is a linear-fractional, non-automorphism self-map of $\ID$ that fixes $\zeta$, then \begin{align}\label{genneeded}C^*(T_z, C_{\varphi})/\mathcal{K} = C^*([T_z], N_{\zeta, \varphi^{\prime}(\zeta)}) = C^*(\{[T_f] : f \in C(\IT)\}, \mathcal{A}_{\zeta, \varphi^{\prime}(\zeta)}) \end{align}  by (\ref{formsingle}) and the proof of Theorem \ref{p1usub}. For $t>0$, we define \begin{align*} \mathcal{C}_{\zeta, t}=\left\{[T_f] + [a] : f \in C(\IT), a \in \mathcal{A}_{\zeta, t}\right\}.\end{align*}  It is straightforward to show that $\mathcal{C}_{\zeta, t}$ is closed under addition and adjoints.  By (\ref{genneeded}), we have that  $C^*(T_z, C_{\varphi})/\mathcal{K} = C^*\left(\mathcal{C}_{\zeta, \varphi^{\prime}(\zeta)}\right)$.   We want to show that $\mathcal{C}_{\zeta, \varphi^{\prime}(\zeta)}$ is a C$^*$-algebra, which would imply that $C^*(T_z, C_{\varphi})/\mathcal{K} = \mathcal{C}_{\zeta, \varphi^{\prime}(\zeta)}$.  As a first step toward this goal, we consider the relations between elements of $\mathcal{A}_{\zeta, t}$ and Toeplitz operators   with continuous symbols.

\begin{lemma} \label{multtoepcomp} If $\zeta \in \IT$, $t >0$, $f \in C(\IT)$, and $[a] \in \mathcal{A}_{\zeta, t}$, then \begin{align}\label{tacommute1} [T_f][a]=f(\zeta)[a]=[a][T_f]. \end{align}  Moreover, if $[T_f] + [a]=[0]$, then $f \equiv 0$ and $[a]=0$. \end{lemma}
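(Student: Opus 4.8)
The plan is to prove the two identities in (\ref{tacommute1}) by first establishing the one-sided relation $[T_f][a]=f(\zeta)[a]$ for all $f\in C(\IT)$ and $[a]\in\mathcal{A}_{\zeta,t}$, and then deducing $[a][T_f]=f(\zeta)[a]$ from it by taking adjoints. To this end, let $S$ be the set of cosets $[b]$ in the Calkin algebra for which $[T_f][b]=f(\zeta)[b]$ for every $f\in C(\IT)$. Then $S$ is a norm-closed linear subspace, and it is closed under multiplication, since $[T_f]([b][c])=([T_f][b])[c]=f(\zeta)[b][c]$ whenever $[b],[c]\in S$. I would next record that $N_{\zeta,t}$ is self-adjoint: using $[U_{\Psi_{\zeta,t^n}}]^{*}=[U_{\Psi_{\zeta,t^{-n}}}]$ (which follows from Theorem \ref{unitaryrel}) together with the covariance relation (\ref{covariance}) from the proof of Theorem \ref{p1u}, one checks that $(\Gamma_{\zeta}(g)[U_{\Psi_{\zeta,t^n}}])^{*}=\Gamma_{\zeta}(h)[U_{\Psi_{\zeta,t^{-n}}}]$, where $h(x)=\overline{g}(x^{t^{-n}})$ again belongs to $C_0([0,1])$. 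Hence $\mathcal{A}_{\zeta,t}$ is the norm-closure of the linear span of finite products of elements of $N_{\zeta,t}$, so it suffices to show $N_{\zeta,t}\subseteq S$.

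To verify $N_{\zeta,t}\subseteq S$, I would first handle the parabolic factor. Applying Theorem \ref{comprel} to the parabolic, non-automorphism self-maps $\rho_{\zeta,a}$ gives $[T_f][C_{\rho_{\zeta,a}}]=f(\zeta)[C_{\rho_{\zeta,a}}]$, that is, $[T_f]\Gamma_{\zeta}(x^{a})=f(\zeta)\Gamma_{\zeta}(x^{a})$, for every $a\in\Upsilon$. The set of $g\in C_0([0,1])$ for which $[T_f]\Gamma_{\zeta}(g)=f(\zeta)\Gamma_{\zeta}(g)$ holds for all $f\in C(\IT)$ is a norm-closed subspace of $C_0([0,1])$ that contains every $x^{a}$; since finite linear combinations of the functions $x^{a}$, $a\in\Upsilon$, are dense in $C_0([0,1])$, this subspace equals $C_0([0,1])$. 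Consequently, for a generator $[a]=\Gamma_{\zeta}(g)[U_{\Psi_{\zeta,t^n}}]\in N_{\zeta,t}$ we obtain $[T_f][a]=(f(\zeta)\Gamma_{\zeta}(g))[U_{\Psi_{\zeta,t^n}}]=f(\zeta)[a]$, so $N_{\zeta,t}\subseteq S$. Therefore $[T_f][a]=f(\zeta)[a]$ for all $[a]\in\mathcal{A}_{\zeta,t}$, and applying this with $\overline{f}$ in place of $f$ to the element $[a]^{*}\in\mathcal{A}_{\zeta,t}$ and taking adjoints yields $[a][T_f]=f(\zeta)[a]$. This establishes (\ref{tacommute1}).

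For the final assertion, suppose $[T_f]+[a]=[0]$, so that $[T_f]=-[a]\in\mathcal{A}_{\zeta,t}$. For an arbitrary $g\in C(\IT)$, multiplying on the left by $[T_g]$ and invoking (\ref{tacommute1}) gives $[T_g][T_f]=-[T_g][a]=-g(\zeta)[a]=g(\zeta)[T_f]$. Since the map $[T_h]\mapsto h$ is a $*$-isomorphism of $C^{*}(T_z)/\mathcal{K}$ onto $C(\IT)$ \cite{Coburn:1967, Coburn:1969}, this reads $[T_{gf}]=[T_{g(\zeta)f}]$; by the linearity and injectivity of this isomorphism, $(g-g(\zeta))f\equiv 0$ on $\IT$. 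Choosing, for each $z_0\in\IT\setminus\{\zeta\}$, a function $g\in C(\IT)$ with $g(\zeta)=0$ and $g(z_0)\neq 0$ forces $f(z_0)=0$; by continuity $f\equiv 0$, and then $[a]=-[T_f]=[0]$.

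The argument involves no serious difficulty. The only points requiring care are the self-adjointness of $N_{\zeta,t}$, which is what lets one reduce the statement about the whole C$^{*}$-algebra $\mathcal{A}_{\zeta,t}$ to a check on its generating set, and the decision to carry out the propagation in a one-sided form so that the second equality of (\ref{tacommute1}) comes for free from passing to adjoints; everything else reduces to Theorems \ref{comprel}, \ref{unitaryrel}, and \ref{p1u}, to the density of the span of the powers $x^{a}$ in $C_0([0,1])$, and to Coburn's identification of $C^{*}(T_z)/\mathcal{K}$ with $C(\IT)$.
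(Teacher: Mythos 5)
Your proof is correct, and while the first half follows the paper's route, two steps are handled genuinely differently. For the identity $[T_f][a]=f(\zeta)[a]$ on generators you argue exactly as the paper does: Theorem \ref{comprel} applied to the maps $\rho_{\zeta,a}$ gives $[T_f]\Gamma_{\zeta}(x^a)=f(\zeta)\Gamma_{\zeta}(x^a)$, and density of the span of the powers $x^a$ in $C_0([0,1])$ does the rest (the paper reduces to generators by citing the proof of Theorem \ref{p1usub}, where $N_{\zeta,t}$ is shown closed under products and adjoints, rather than by your closed-right-ideal set $S$, but this is cosmetic). The first real divergence is the reversed product: the paper proves $[a][T_f]=f(\zeta)[a]$ directly via Jury's intertwining $[U_{\gamma}][T_f]=[T_{f\circ\gamma}][U_{\gamma}]$ together with $\Psi_{\zeta,t^n}(\zeta)=\zeta$, whereas you obtain it for free by applying the left-hand identity to $\overline{f}$ and $[a]^*$ and taking adjoints; your route trades the explicit automorphism--Toeplitz relation for the self-adjointness of $\mathcal{A}_{\zeta,t}$, which you verify. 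The second divergence is the final assertion: the paper multiplies $[T_f]+[a]=0$ by $[T_f]$ itself, deduces $f(f-f(\zeta))\equiv 0$, uses connectedness of $\IT$ to conclude $f\equiv f(\zeta)$, and then needs the norm estimate $\|[f(\zeta)I]+[a]\|\geq|f(\zeta)|$, which rests on $\mathcal{A}_{\zeta,t}$ being non-unital. You instead multiply by an arbitrary $[T_g]$, obtain $(g-g(\zeta))f\equiv 0$ via Coburn's symbol isomorphism, and kill $f$ off $\zeta$ by a Urysohn-type choice of $g$, with continuity finishing at $\zeta$. Your version is arguably cleaner here, since it avoids invoking non-unitality of $\mathcal{A}_{\zeta,t}$ (a fact the paper only gets from the crossed-product identifications); the paper's version needs only the single symbol $f$ and no auxiliary test functions. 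Both arguments are complete and rest on the same ingredients: Theorems \ref{comprel} and \ref{unitaryrel}, the covariance relation (\ref{covariance}), and Coburn's theorem.
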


\begin{proof}
Let $f \in C(\IT)$.  By the proof of Theorem \ref{p1usub}, the finite linear combinations of elements of $N_{\zeta, t}$ form a dense set in $\mathcal{A}_{\zeta, t}$, so it suffices to show that \begin{align}\label{tacommute2} [T_f] \Gamma_{\zeta}(g)[U_{\Psi_{\zeta, t^n}}] = f(\zeta)\Gamma_{\zeta}(g)[U_{\Psi_{\zeta, t^n}}]=\Gamma_{\zeta}(g)[U_{\Psi_{\zeta, t^n}}][T_f]\end{align} for all $g \in C_0([0,1])$ and $n \in \IZ$.  By  Theorem \ref{comprel} and the definition of $\Gamma_{\zeta}$, we see that \begin{align}\label{tacommute3} [T_f]\Gamma_{\zeta}(g)= f(\zeta)\Gamma_{\zeta}(g) = \Gamma_{\zeta}(g)[T_f] \end{align} for all functions $g \in C_0([0,1])$ of the form $g(x)=\sum_{i=1}^m c_ix^{a_i}$.  Since functions of this form are dense in $C_0([0,1])$, (\ref{tacommute3}) and the first equality in (\ref{tacommute2}) hold for all $g \in C_0([0,1])$.

In \cite{Jury:2007G}, Jury proved that, for all automorphisms $\gamma$ of $\ID$,  $[U_{\gamma}][T_f] =[T_{f \circ \gamma}][U_{\gamma}]$.  
Hence \begin{align*} \Gamma_{\zeta}(g)[U_{\Psi_{\zeta, t^n}}][T_f] &= \Gamma_{\zeta}(g)[T_{f \circ \Psi_{\zeta, t^n}}][U_{\Psi_{\zeta, t^n}}]\\ 
&= f(\Psi_{\zeta, t^n}(\zeta))\Gamma_{\zeta}(g)[U_{\Psi_{\zeta, t^n}}] = f(\zeta)\Gamma_{\zeta}(g)[U_{\Psi_{\zeta, t^n}}]\end{align*} 
by (\ref{tacommute3}) and the fact that $\Psi_{\zeta, t^n}$ fixes $\zeta.$  Thus, the second equality in  (\ref{tacommute2}) holds, which proves the first part of the lemma.

Now suppose that $[T_f]+[a]=0$.  Then, by (\ref{tacommute1}) and the properties of Toeplitz operators with continuous symbols, \begin{align*} [0] = [T_f]([T_f]+[a])= [T_{f^2}]+f(\zeta)[a] = [T_{f^2}]-f(\zeta)[T_f] = [T_{f(f-f(\zeta))}].\end{align*}  Since $T_{f(f-f(\zeta))}$ is compact, $f(f-f(\zeta))\equiv 0$.  Hence, for all $z \in \IT$, $f(z)=0$ or $f(z)=f(\zeta)$.  Since $f$ is continuous on $\IT$ and $\zeta \in \IT$, $f(z)=f(\zeta)$ for all $z \in \IT$.  So $[T_f]+[a]=[f(\zeta)I]+[a]$, which is an element of the unitization of $\mathcal{A}_{\zeta, t}$.  Since $[a] \in \mathcal{A}_{\zeta, t}$, $||[f(\zeta)I] + [a]|| \geq |f(\zeta)|.$  Therefore, $f(\zeta)=0$, so $f \equiv 0$ and $[a]=0$. 
\end{proof}

As an immediate consequence of Lemma \ref{multtoepcomp}, we see that $\mathcal{C}_{\zeta, \varphi^{\prime}(\zeta)}$ is closed under multiplication and is thus a dense $*$-subalgebra of $C^*(T_z, C_{\varphi})/\mathcal{K}$.  To show that $\mathcal{C}_{\zeta, \varphi^{\prime}(\zeta)}$ is a C$^*$-algebra,  we set $C_{\zeta}(\IT) = \{f \in C(\IT) : f(\zeta)=0\}$ and let $\mathcal{B}_{\varphi}$ denote the unitization of $C_{\zeta}(\IT) \oplus \mathcal{A}_{\zeta, \varphi^{\prime}(\zeta)}$.  

\begin{theorem} If $\varphi$ is a linear-fractional, non-automorphism self-map of $\ID$ that fixes $\zeta \in \IT$, then $\mathcal{C}_{\zeta, \varphi^{\prime}(\zeta)}$ is a C$^{\, *}$-algebra.  Moreover, $C^*(T_z, C_{\varphi})/\mathcal{K} = \mathcal{C}_{\zeta, \varphi^{\prime}(\zeta)}$, and the map $\Theta_{\varphi} : \mathcal{C}_{\zeta, \varphi^{\prime}(\zeta)} \rightarrow \mathcal{B}_{\varphi}$, which is defined by \begin{align} \Theta_{\varphi} \left([T_f]+[a] \right)= (f-f(\zeta), [a]) +f(\zeta)I\end{align} for all $f \in C(\IT)$ and $[a] \in \mathcal{A}_{\zeta, \varphi^{\prime}(\zeta)}$, is a $*$-isomorphism. \end{theorem}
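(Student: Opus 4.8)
The plan is to establish the three claims in sequence: first that $\mathcal{C}_{\zeta, \varphi^{\prime}(\zeta)}$ is a C$^*$-algebra, then that it coincides with $C^*(T_z, C_{\varphi})/\mathcal{K}$, and finally that $\Theta_{\varphi}$ is a $*$-isomorphism. For brevity write $t = \varphi^{\prime}(\zeta)$. The bulk of the work has already been done: by the remarks following Lemma \ref{multtoepcomp}, $\mathcal{C}_{\zeta, t}$ is a dense $*$-subalgebra of $C^*(T_z, C_{\varphi})/\mathcal{K}$, so to get the first two claims simultaneously it suffices to prove that $\mathcal{C}_{\zeta, t}$ is \emph{complete} (equivalently, closed in the Calkin algebra). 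The natural route to completeness is to exhibit $\mathcal{C}_{\zeta, t}$ as the isomorphic image of a genuine C$^*$-algebra, namely $\mathcal{B}_{\varphi}$, via $\Theta_{\varphi}$; so in practice I would prove the third claim first in the form ``$\Theta_{\varphi}$ is a well-defined $*$-isomorphism of $*$-algebras that is isometric,'' and then completeness of $\mathcal{C}_{\zeta, t}$ follows from completeness of $\mathcal{B}_{\varphi}$.

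First I would check that $\Theta_{\varphi}$ is \emph{well defined} and \emph{injective}: an element of $\mathcal{C}_{\zeta, t}$ is of the form $[T_f] + [a]$, and two such expressions $[T_f]+[a] = [T_g]+[b]$ force $[T_{f-g}] + [a-b] = [0]$, so by the second part of Lemma \ref{multtoepcomp} we get $f \equiv g$ and $[a]=[b]$; hence the decomposition is unique and $\Theta_{\varphi}$ is a well-defined bijection onto $\mathcal{B}_{\varphi}$ (surjectivity is clear since every element $(h, [a]) + \lambda I$ of $\mathcal{B}_{\varphi}$, with $h \in C_{\zeta}(\IT)$, is the image of $[T_{h+\lambda}] + [a]$). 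Next I would verify that $\Theta_{\varphi}$ is a $*$-\emph{homomorphism}. Additivity and the $*$-property are immediate from the corresponding properties of the decomposition $[T_f]+[a]$ (using that $[T_f]^* = [T_{\bar f}]$ and that $\mathcal{A}_{\zeta,t}$ is $*$-closed). Multiplicativity is where Lemma \ref{multtoepcomp} does the essential work: expanding
\begin{align*}
\left([T_f]+[a]\right)\left([T_g]+[b]\right) = [T_{fg}] + [T_f][b] + [a][T_g] + [a][b],
\end{align*}
the middle two terms collapse to $g(\zeta)[a] + f(\zeta)[b]$ by (\ref{tacommute1}), and one checks directly that $\Theta_{\varphi}$ applied to the left side equals the product of the images $\left(f - f(\zeta),[a]\right)+f(\zeta)I$ and $\left(g-g(\zeta),[b]\right)+g(\zeta)I$ computed in $\mathcal{B}_{\varphi}$; the bookkeeping is routine once one remembers that multiplication in the direct sum $C_{\zeta}(\IT) \oplus \mathcal{A}_{\zeta,t}$ is componentwise and that the scalars floating out of the unitization match up precisely because $fg$ has value $f(\zeta)g(\zeta)$ at $\zeta$.

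The last and most delicate point is showing $\Theta_{\varphi}$ is \emph{isometric} (or at least bounded below), since that is what transfers completeness of $\mathcal{B}_{\varphi}$ to $\mathcal{C}_{\zeta,t}$ and also upgrades the algebraic isomorphism to a $*$-isomorphism of C$^*$-algebras. An injective $*$-homomorphism between C$^*$-algebras is automatically isometric, so the cleanest argument is: once we know $\mathcal{C}_{\zeta,t}$ is closed (hence a C$^*$-algebra), injectivity of the $*$-homomorphism $\Theta_{\varphi}$ gives isometry for free. This creates an apparent circularity with the completeness argument, so the right order is to first argue completeness of $\mathcal{C}_{\zeta, t}$ directly. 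I would do this by noting that $C_{\zeta}(\IT) \oplus \mathcal{A}_{\zeta,t}$ is a C$^*$-algebra (a direct sum of C$^*$-algebras — here $\mathcal{A}_{\zeta,t}$ is one by the identifications $\mathcal{A}_{\zeta,1} \cong C_0([0,1])$ and $\mathcal{A}_{\zeta,t} \cong C_0([0,1])\rtimes\IZ$ recorded before the lemma), so $\mathcal{B}_{\varphi}$ is a C$^*$-algebra; then the \emph{inverse} map $\Psi: \mathcal{B}_{\varphi} \to C^*(T_z,C_{\varphi})/\mathcal{K}$, sending $(h,[a]) + \lambda I \mapsto [T_{h+\lambda}] + [a]$, is a $*$-homomorphism from a C$^*$-algebra into a C$^*$-algebra by the same computations as above, hence is norm-decreasing with closed range; its range is exactly $\mathcal{C}_{\zeta,t}$, so $\mathcal{C}_{\zeta,t}$ is closed. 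This gives $\mathcal{C}_{\zeta,t} = C^*(T_z,C_{\varphi})/\mathcal{K}$ and makes $\mathcal{C}_{\zeta,t}$ a C$^*$-algebra; then $\Theta_{\varphi} = \Psi^{-1}$ is an injective $*$-homomorphism of C$^*$-algebras, hence isometric, completing the proof. The main obstacle is thus purely organizational — arranging the completeness and isomorphism arguments in a non-circular order — and the one genuine computation that must be carried out carefully is the multiplicativity check, where the cross terms must be shown to match after the scalars produced by the unitization are tracked correctly.
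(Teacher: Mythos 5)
Your proposal is correct and follows essentially the same route as the paper: establish via Lemma \ref{multtoepcomp} that $\Theta_{\varphi}$ is a well-defined injective $*$-homomorphism onto the C$^*$-algebra $\mathcal{B}_{\varphi}$, then observe that its inverse is an injective $*$-homomorphism from $\mathcal{B}_{\varphi}$ into the C$^*$-algebra $C^*(T_z, C_{\varphi})/\mathcal{K}$, hence isometric with closed range, so that $\mathcal{C}_{\zeta, \varphi^{\prime}(\zeta)}$ is closed and therefore equals $C^*(T_z, C_{\varphi})/\mathcal{K}$. The ``non-circular ordering'' you identify --- arguing through the inverse map from the known C$^*$-algebra $\mathcal{B}_{\varphi}$ --- is precisely how the paper resolves the same issue.
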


\begin{proof}  The map $\Theta_{\varphi}$ is well-defined by Lemma  \ref{multtoepcomp}.  Straight-forward calculations show that $\Theta_{\varphi}$ is an injective $*$-homomorphism and that the image of $\mathcal{C}_{\zeta, \varphi^{\prime}(\zeta)}$ under $\Theta_{\varphi}$ is $\mathcal{B}_{\varphi},$ which is a C$^*$-algebra.  Hence $\Theta_{\varphi}^{-1}$ is an injective $*$-homomorphism of $\mathcal{B}_{\varphi}$ into $C^*(T_z, C_{\varphi})/\mathcal{K}$.  Thus $\Theta^{-1}_{\varphi}$ is isometric, and its range $\mathcal{C}_{\zeta, \varphi^{\prime}(\zeta)}$ is closed.  Therefore, $\mathcal{C}_{\zeta, \varphi^{\prime}(\zeta)}$ is a C$^*$-algebra, and the other statements follow immediately. \end{proof}

\begin{corollary}\label{parabolicshift} If $\rho$ is a parabolic, non-automorphism self-map of $\ID$ that fixes the point $\zeta \in \IT$, then $C^*(T_z, C_{\rho})/\mathcal{K}$ is isometrically $*$-isomorphic to the unitization of $C_{\zeta}(\IT) \oplus C_{0}([0,1])$.
\end{corollary}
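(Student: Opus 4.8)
The plan is to read the corollary off the preceding theorem. A parabolic self-map $\rho$ of $\ID$ fixing $\zeta \in \IT$ is in particular a linear-fractional, non-automorphism self-map of $\ID$ fixing $\zeta$, and, as recalled in Section \ref{intro}, it satisfies $\rho^{\prime}(\zeta)=1$. Hence the theorem applies with $\varphi=\rho$ and shows that $\Theta_{\rho}$ is a $*$-isomorphism of $C^*(T_z, C_{\rho})/\mathcal{K}=\mathcal{C}_{\zeta, 1}$ onto $\mathcal{B}_{\rho}$, the unitization of $C_{\zeta}(\IT) \oplus \mathcal{A}_{\zeta, 1}$.

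It then remains only to recall, as noted before the statement, that $\mathcal{A}_{\zeta, 1} \cong C_0([0,1])$: from (\ref{psit}) one has $\Psi_{\zeta, 1}=\id$, so $[U_{\Psi_{\zeta, 1^n}}]=[I]$ for every $n$ and $N_{\zeta, 1}$ collapses to $\Gamma_{\zeta}(C_0([0,1]))$, which is already a C$^*$-algebra because $\Gamma_{\zeta}$ is a $*$-isomorphism of $C([0,1])$ onto $C^*(\IP_{\zeta})/\mathcal{K}$ and $C_0([0,1])$ is a C$^*$-subalgebra of $C([0,1])$; thus $\mathcal{A}_{\zeta, 1}=\Gamma_{\zeta}(C_0([0,1]))$ and the restriction of $\Gamma_{\zeta}$ to $C_0([0,1])$ is the desired $*$-isomorphism. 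Pairing this isomorphism with the identity on $C_{\zeta}(\IT)$ gives a $*$-isomorphism $C_{\zeta}(\IT) \oplus C_0([0,1]) \to C_{\zeta}(\IT) \oplus \mathcal{A}_{\zeta, 1}$, which extends uniquely to a $*$-isomorphism of the corresponding unitizations. Composing with $\Theta_{\rho}^{-1}$ then identifies the unitization of $C_{\zeta}(\IT) \oplus C_0([0,1])$ with $C^*(T_z, C_{\rho})/\mathcal{K}$, and since a $*$-isomorphism of C$^*$-algebras is automatically isometric, this is the claimed isometric $*$-isomorphism.

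One may additionally observe that the statement does not depend on the particular choice of $\rho$, since by Theorem \ref{paraboliciso} one has $C^*(C_{\rho})=C^*(\IP_{\zeta})$ for every parabolic, non-automorphism self-map $\rho$ of $\ID$ fixing $\zeta$, so $C^*(T_z, C_{\rho})$ is the same for all such $\rho$. There is no genuine obstacle in this argument; the only steps that merit even momentary attention are the collapse $N_{\zeta, 1}=\Gamma_{\zeta}(C_0([0,1]))$ coming from $\Psi_{\zeta, 1}=\id$ and the routine passage from a $*$-isomorphism of C$^*$-algebras to one of their unitizations.
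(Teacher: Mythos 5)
Your proposal is correct and follows exactly the route the paper intends: the corollary is an immediate consequence of the preceding theorem (applied with $\varphi=\rho$, $\rho^{\prime}(\zeta)=1$) together with the identification $\mathcal{A}_{\zeta,1}\cong C_0([0,1])$ already recorded before Lemma \ref{multtoepcomp}. Your extra details — the collapse of $N_{\zeta,1}$ to $\Gamma_{\zeta}(C_0([0,1]))$ via $\Psi_{\zeta,1}=\id$ and the passage to unitizations — are sound but just make explicit what the paper leaves to the reader.
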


\begin{corollary}\label{nonparashift} If $\varphi$ is linear-fractional, non-automorphism self-map of $\ID$ that fixes the point $\zeta \in \IT$ and satisfies $\varphi^{\prime}(\zeta) \neq 1,$ then $C^*(T_z, C_{\varphi})/\mathcal{K}$ is isometrically $*$-isomorphic to the unitization of $C_{\zeta}(\IT) \oplus(C_0([0,1]) \rtimes_{\beta^{\varphi}} \IZ),$ where the action $\beta^{\varphi}$ is defined as in Theorem \ref{structureonephi}. \end{corollary}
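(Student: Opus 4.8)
The plan is to read this off the theorem that immediately precedes it. That theorem already establishes the identity $C^*(T_z, C_{\varphi})/\mathcal{K} = \mathcal{C}_{\zeta, \varphi^{\prime}(\zeta)}$ and produces an isometric $*$-isomorphism $\Theta_{\varphi}$ from $\mathcal{C}_{\zeta, \varphi^{\prime}(\zeta)}$ onto $\mathcal{B}_{\varphi}$, the unitization of $C_{\zeta}(\IT) \oplus \mathcal{A}_{\zeta, \varphi^{\prime}(\zeta)}$. So the entire content that remains is to recognize the non-unital algebra $\mathcal{A}_{\zeta, \varphi^{\prime}(\zeta)}$ as the crossed product $C_0([0,1]) \rtimes_{\beta^{\varphi}} \IZ$, with $\beta^{\varphi}$ the specific action of Theorem \ref{structureonephi}, and then transport that identification through the direct sum and the unitization.

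First I would set $G := \{\Psi_{\zeta, \varphi^{\prime}(\zeta)^n} : n \in \IZ\}$ and observe that, since $\varphi^{\prime}(\zeta)$ is a positive real number different from $1$, the map $n \mapsto \Psi_{\zeta, \varphi^{\prime}(\zeta)^n}$ is injective; hence by (\ref{psicommute}) the set $G$ is an abelian group of automorphisms of $\ID$ fixing $\zeta$, isomorphic to $\IZ$, with $\gamma^{\prime}(\zeta) \neq 1$ for every $\gamma \in G \setminus \{\id\}$. Thus $G$ meets the hypotheses of Theorem \ref{p1usub}, and the set $N_{\zeta, \varphi^{\prime}(\zeta)}$ of this section is exactly the set $N_G$ appearing in the proof of that theorem. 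Next I would invoke that proof: it shows $N_G$ is closed under products and adjoints, so the non-unital C$^{\, *}$-algebra $\mathcal{A}_{\zeta, \varphi^{\prime}(\zeta)}$ generated by $N_{\zeta, \varphi^{\prime}(\zeta)}$ is the closed linear span of $N_G$; and the $*$-isomorphism $\Lambda^{-1}$ constructed there carries this closed linear span onto the closure of $C_c(G_d, C_0([0,1]))$ in $C([0,1]) \rtimes_{\beta} G_d$, which by \cite[Lemma 3.17]{Williams:2007} is $C_0([0,1]) \rtimes_{\beta} G_d$. Identifying $G_d$ with $\IZ$ via $n \mapsto \Psi_{\zeta, \varphi^{\prime}(\zeta)^n}$ and using $\Psi^{\prime}_{\zeta, t}(\zeta) = t$ (so that $\Psi_{\zeta, \varphi^{\prime}(\zeta)^n}^{\prime}(\zeta) = \varphi^{\prime}(\zeta)^n$), the action $\beta$ of Theorem \ref{p1usub} becomes $f(x) \mapsto f\bigl(x^{\varphi^{\prime}(\zeta)^n}\bigr)$, which is precisely $\beta^{\varphi}_n$. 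This yields $\mathcal{A}_{\zeta, \varphi^{\prime}(\zeta)} \cong C_0([0,1]) \rtimes_{\beta^{\varphi}} \IZ$.

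To finish, I would note that an isometric $*$-isomorphism between the non-unital algebras $\mathcal{A}_{\zeta, \varphi^{\prime}(\zeta)}$ and $C_0([0,1]) \rtimes_{\beta^{\varphi}} \IZ$ induces an isomorphism of $C_{\zeta}(\IT) \oplus \mathcal{A}_{\zeta, \varphi^{\prime}(\zeta)}$ onto $C_{\zeta}(\IT) \oplus (C_0([0,1]) \rtimes_{\beta^{\varphi}} \IZ)$ acting as the identity on the first summand, and hence extends to an isometric $*$-isomorphism of the unitizations; composing with $\Theta_{\varphi}$ from the preceding theorem gives the stated description of $C^*(T_z, C_{\varphi})/\mathcal{K}$. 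I do not expect a real obstacle here: the hard work was done in the preceding theorem and in Theorem \ref{p1usub}, and what remains is the bookkeeping above. The only step that requires a moment's care is checking that the group $\{\Psi_{\zeta, \varphi^{\prime}(\zeta)^n}\}$ produces the action $\beta^{\varphi}$ and not some other action — but this reduces immediately to the identity $\Psi^{\prime}_{\zeta, t}(\zeta) = t$ recorded after (\ref{psit}).
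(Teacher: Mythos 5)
Your proposal is correct and follows the same route the paper intends: the corollary is read off from the preceding theorem's isomorphism $\Theta_{\varphi}$ onto the unitization of $C_{\zeta}(\IT) \oplus \mathcal{A}_{\zeta, \varphi^{\prime}(\zeta)}$, combined with the identification $\mathcal{A}_{\zeta, t} \cong C_0([0,1]) \rtimes \IZ$ for $t \neq 1$ already recorded after the definition of $\mathcal{A}_{\zeta, t}$ via Theorem \ref{p1usub}. Your verification that the induced action is precisely $\beta^{\varphi}$ (using $\Psi^{\prime}_{\zeta, t}(\zeta)=t$) is exactly the bookkeeping the paper leaves implicit.
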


We note that the unitization of $C_{\zeta}(\IT) \oplus C_{0}([0,1])$ is isometrically $*$-isomorphic to the C$^*$-subalgebra $\tilde{\mathcal{D}}_{\varphi}$ of $C(\IT) \oplus C([0,1])$ defined by $\tilde{\mathcal{D}}_{\varphi} = \{(w,v) \in C(\IT) \oplus C([0,1]) : w(\zeta)=v(0)\}.$  The details of the proof are straight-forward and are left to the reader.  Similarly, the C$^*$-algebra in Corollary \ref{nonparashift} is isomorphic to a C$^*$-subalgebra of the direct sum of $C(\IT)$ and the unitzation of $C_0([0,1]) \rtimes_{\beta^{\varphi}} \IZ$, but we omit the details as they do not add significantly to our understanding of the structure of the C$^*$-algebra.  

We summarize the results of this section in Table \ref{structurecharacterization}.   Note that the first result in the table follows from the fact that $C_{\varphi}$ is compact if $||\varphi||_{\infty} < 1.$

\begin{table}[h]
\centering
\caption{The Structure of $C^*(T_z, C_{\varphi})/\mathcal{K}$ for a Linear-Fractional Self-map $\varphi$ of $\ID$} \label{structurecharacterization}
\begin{tabular}{p{1.75in}p{.05in}p{2.75in}}
\hline &&\\*[-10pt]
Conditions on $\varphi$ & &$C^*(T_z, C_{\varphi})/\mathcal{K}$ is isometrically $*$-isomorphic to \\*[1pt] \hline &&\\*[-8pt]
$||\varphi||_{\infty} < 1$ & &$C(\IT)$\\*[5pt]
automorphism of $\ID$ of finite order $q$ & & $C(\IT) \rtimes_{\alpha^{\varphi}} \IZ/q\IZ$\\*[16pt]
automorphism of $\ID$ of infinite order & & $C(\IT) \rtimes_{\alpha^{\varphi}} \IZ$\\*[16pt]
non-automorphism, $\varphi(\zeta)=\zeta$& &the unitization of $C_{\zeta}(\IT) \oplus C_0([0,1]))$\\  for some $\zeta \in \IT$, $\varphi^{\prime}(\zeta)=1$& &$\cong \{(w, v) \in C(\IT) \oplus C([0,1]) : w(\zeta)=v(0)\}$\\*[5pt]
 non-automorphism, $\varphi(\zeta) = \zeta$ for some $\zeta \in \IT$, $\varphi^{\prime}(\zeta) \neq 1$ && the unitization of $C_\zeta(\IT) \oplus (C_0([0,1]) \rtimes_{\beta^{\varphi}} \IZ)$\\*[16pt]
non-automorphism, $\varphi(\zeta)=\eta$ for some $\zeta, \eta \in \IT$ with $\zeta \neq \eta$ & &\\*[-24pt] && $\left\{\begin{array}{l} (w, V) \in C(\IT) \oplus \mathbb{M}_2(C([0,1])) : \\*[3pt] V(0) = \left(\begin{array}{cc} w(\zeta) & 0 \\ 0 & w(\eta)\end{array} \right)\end{array}\right\}$ \\*[20pt] \hline \end{tabular}
 \end{table} 
 
 \vspace{11pt}

\textit{Acknowledgement.} The author thanks Thomas Kriete for his encouragement and many helpful suggestions during the development of this work.

 \bibliographystyle{abbrv} \renewcommand{\bibname}{Bibliography}  
\bibliography{BiblioFixedPointShort}
\end{document}